\newcommand{\tcal}{\mathcal{T}}
\newcommand{\kahler}{K\"ahler}
\def\e{{\rm e}}
\def\sq2{\sqrt{2}}
\def\t2{{\mathbb T}^2}
\def\s2{{\mathbb S}^2}
\def\N{\mathbb{N}}
\def\R{\mathbb{R}}
\def\Z{\mathbb{Z}}
\def\C{\mathbb{C}}
\def\e{{\rm e}}
\def\sq2{\sqrt{2}}
\def\t2{{\mathbb T}^2}
\def\s2{{\mathbb S}^2}
\def\N{\mathbb{N}}
\def\R{\mathbb{R}}
\def\Z{\mathbb{Z}}
\def\C{\mathbb{C}}
\def\hto0{\xrightarrow{\hbar\to 0}}
\def\rto0{\xrightarrow{r\to 0}}
\newcommand{\HH}{\mathcal H}
 \newcommand{\nj}{n_{\mbox{crit}}(\lambda_j)}
 \newcommand{\szego}{Szeg\"o}
\renewcommand{\le}{\leqslant}
\renewcommand{\ge}{\geqslant}
\newcommand{\E}{\mathbb E}
\newcommand{\PP}{\mathbb P}
\newcommand{\Ss}{{\mathbb S}}
\newcommand{\Hh}{{\mathbb H}}
\newcommand{\dbar}{\bar\partial}
\newcommand{\ddbar}{\partial\dbar}
\renewcommand{\Re}{{\operatorname{Re\,}}}
\renewcommand{\Im}{{\operatorname{Im\,}}}
\newcommand{\half}{{\frac{1}{2}}}
\renewcommand{\phi}{\varphi}
\newcommand{\ccal}{\mathcal{C}}
\newcommand{\dcal}{\mathcal{D}}
\newcommand{\gcal}{\mathcal{G}}
\newcommand{\hcal}{\mathcal{H}}
\newcommand{\lcal}{\mathcal{L}}
\newcommand{\ncal}{\mathcal{N}}
\newcommand{\pcal}{\mathcal{P}}
\newcommand{\ocal}{\mathcal{O}}
\newcommand{\scal}{\mathcal{S}}
\renewcommand{\H}{{\mathbf H}}
\newcommand{\acal}{\mathcal{A}}
\newcommand{\sgn}{\mbox{sgn}}
\newcommand{\La}{\Lambda}
\newcommand{\la}{\lambda}
\newcommand{\de}{\delta}
\newcommand{\De}{\Delta}
\newtheorem{theo}{{\sc Theorem}}[section]
\newtheorem{cor}[theo]{{\sc Corollary}}
\newtheorem{conj}[theo]{{\sc Conjecture}}
\newtheorem{lem}[theo]{{\sc Lemma}}
\newtheorem{prop}[theo]{{\sc Proposition}}
\newenvironment{rem}{\medskip\noindent{\it Remark:\/} }{\medskip}
\newenvironment{defin}{\medskip\noindent{\it Definition:\/} }{\medskip}
\newtheorem{maintheo}{{\sc Theorem}}
\newtheorem{maincor}[maintheo]{{\sc Corollary}}
\newtheorem{mainprop}[maintheo]{{\sc Proposition}}
\newtheorem{mainlem}[maintheo]{{\sc Lemma}}
\author{Steve Zelditch}
\title[Eigenfunctions and nodal sets  ]
{Eigenfunctions   and nodal sets }
\address{Department of Mathematics, Northwestern  University, Evanston, IL 60208, USA}
\email{zelditch@math.northwestern.edu}
\thanks{Research partially supported by NSF grant  \# DMS-0904252.}
\date{\today}
\begin{document}


\maketitle

\begin{abstract} This is a survey of recent results on  nodal sets
of eigenfunctions of the Laplacian on Riemannian manifolds. The emphasis is on complex nodal
sets of analytic continuations of eigenfunctions. 
\bigskip

Key Words: Laplacian, eigenvalues and eigenfunctions, quasi-mode,
wave equation, frequency function, doubling estimate, nodal set,
quantum limit, $L^p$ norm, geodesic flow,  quantum complete
integrable, ergodic, Anosov, Riemannian random wave.

AMS subject classification: 34L20, 35P20, 35J05, 35L05, 53D25,
58J40, 58J50, 60G60.

\end{abstract}
\bigskip

\bigskip


Let $(M, g)$ be a (usually compact) Riemannian manifold of dimension $n$, and let $\{\phi_j\}$ denote an orthonormal
basis of eigenfunctions of its  Laplacian,
\begin{equation}\label{EP}  \Delta_g\; \phi_j= - \lambda_j^2\; \phi_j \;\;\; \langle \phi_j, \phi_k \rangle = \delta_{jk}. \end{equation} Here $\langle u, v \rangle = \int_M u v dV_g$ where $dV_g$ is the volume form of $(M, g)$.  If  $\partial M \not=0$ we
impose Dirichlet or Neumann boundary conditions. When $(M, g)$ is
compact, the spectrum of $\Delta$  is discrete and can be put in  non-decreasing order $\lambda_0 < \lambda_1 \leq \lambda_2
\uparrow \infty$.  The eigenvalues $\lambda_j^2$ are often termed energies while their square roots
$\lambda_j$ are often termed the frequencies. 
The nodal set of an eigenfunction $\phi_{\lambda}$  is the zero
set
\begin{equation} Z_{\phi_{\lambda}} \footnote{In difference references we use either the notation $Z$ or $\ncal$
for the nodal set. Sometimes we use the subscript $\phi_{\lambda}$ and sometimes only $\lambda$.}= \{x \in M: \phi_{\lambda}(x) = 0\}. \end{equation} The aim of this
survey is to review some recent results on the
$\hcal^{n-1}$-surface measure and on the yet more difficult problem of  the spatial
distribution
of the nodal sets, i.e. the behavior of the integrals \begin{equation} \label{INT} 
 \frac{1}{\lambda_j} \int_{Z_{\phi_{\lambda_j}}} f
dS_{\lambda_j} , \;\; (f \in C(M)) \end{equation}  as $\lambda \to \infty$. Here, 
 $dS_{\lambda} = d \hcal^{n-1}$ denotes the Riemannian
hypersurface volume form on $Z_{\phi_{\lambda}}$.More
generally, we consider the same problems for any level set
\begin{equation} \ncal_{\phi_{\lambda}}^c : = \{\phi_{\lambda} = c\},\end{equation}
where $c$ is a constant (which in general may depend on
$\lambda$). Nodal sets are special level sets and much more
attention has been devoted to them than other level sets, but it
is often of interest to study general level sets and in particular
`high level' sets or excursion sets. 

We have recently written surveys \cite{Z5,Z6} on the global harmonic analysis of eigenfunctions, which include
some discussion of nodal sets and critical point sets. To the extent possible, we hope to avoid repeating
what is written there, but  inevitably  there will be some overlap. We refer there and \cite{H} for background on well-established results. We also decided to cover some results of  research in progress  (especially from \cite{Z3}, but also 
on $L^{\infty}$ quantum ergodic theory).  We generally refer to the results as `Conjectures' even when detailed
arguments exist, since they have not yet been carefully examined by others.

There are two basic intuitions underlying many of the conjectures and results on eigenfunctions:

\begin{itemize}

\item Eigenfunctions of $\Delta_g$-eigenvalue $- \lambda^2$ are similar to polynomials of degree $\lambda$. 
In particular, $Z_{\lambda}$ is similar to a real algebraic variety of degree $\lambda$.

 Of course, this intuition
is most reliable when $(M, g)$ is real analytic. It is quite unclear at this time how reliable it is for general $C^{\infty}$
metrics, although there are some recent improvements on volumes and equidistribution in the smooth case. 

\item High frequency behavior of eigenfunctions reflects the dynamics of the geodesic flow $G^t: S^*M \to S^*M$
of $M$. Here, $S^*M$ is the unit co-sphere bundle of $(M, g)$. 

When the dynamics is ``chaotic" (highly ergodic), then eigenfunctions
are de-localized and 
behave like Gaussian random waves of almost fixed frequency. This motivates the study of Gaussian random wave models for eigenfunctions, and suggests that in the `chaotic case'    nodal sets should be asympotically  uniformly
distributed.

 When $G^t$ is completely integrable, model eigenfunctions are highly
localized and their nodal sets are often exhibit quite regular patterns. 
The latter heuristic is not necessarily expected
when there exist high multiplicities, as for rational flat tori, and then some weaker randomness can enter.

\end{itemize}

Both of these general intuitions lead to predictions about nodal sets and critical point sets. Most of the predictions
are well beyond current or forseeable techniques to settle.  A principal theme of this survey is that the analogues of such `wild'
predictions can sometimes be proved for real analytic $(M, g)$  if one analytically continues eigenfunctions to the complexification
of $M$ and studies complex nodal sets instead of real ones. 
\bigskip

{\it As with algebraic varieties, nodal sets in the real analytic case  are better behaved in the complex domain than the real domain. That is, zero sets of analytic continuations of eigenfunctions to the complexification of $M$ behave
like complex algebraic varieties and also  reflect
the dynamics of the geodesic flow.  }
\medskip

It is well-known that the  complexification of $M$ can be identified with a neighborhood of the zero-section 
of  the phase space $T^* M$.  That is one reason why dynamics of the geodesic flow has greater impact on the
complex nodal set. 

We will exhibit a number of relatively recent results (some unpublished elsewhere) which justify this viewpoint:

\begin{itemize}

\item Theorem \ref{BNP}, which shows that complex methods can be used to give upper bounds
on the number of nodal components of Dirichlet or Neumann eigenfunctions which ``touch the boundary"
of a real analytic plane domain. 

\item Theorem \ref{ZERO} on the limit distribution of the normalized currents of integration
$$\frac{1}{\lambda_{j_k}} [Z_{\phi_{j_k}^{\C}}] $$
over the complex zero sets of ``ergodic eigenfunctions'' in the complex domain.

\item Theorem \ref{ZMEASURE} and  Corollary \ref{ZDSM}, which show that the similar currents for
analytic continuations of ``Riemannian random waves'' tend to the same limit almost surely. Thus,
the prediction that zero sets of ergodic eigenfunctions agrees with that of random waves is correct
in the complex domain. 

\item Sharper results on the distribution of intersections points of nodal sets and geodesics on complexified
real analytic surfaces (Theorem \ref{MAINCOR}).
\end{itemize}

Our analysis of  nodal sets in the complex domain is based on the use of complex Fourier
integral
techniques (i.e. generalized Paley-Wiener theory). The principal tools are the analytic continuation of the Poisson-wave kernel
and the Szeg\"o kernel in the complex domain. They become Fourier integral operators with complex phase and
with wave fronts along the complexified geodesic flow. One can read off the growth properties of complexified
eigenfunctions from mapping properties of such operators.  Log moduli of complexified spectral projectors  are
asymptotically extremal  pluri-subharmonic functions for all $(M, g)$.  These ideas are the basis 
of the articles \cite{Z2, TZ,  Z3,Z4,Z8, Z9,He}.  Such ideas have antecedents in work of S. Bernstein,  Baouendi-
Goulaouic, and Donnelly-Fefferman, Guillemin,  F.H. Lin  (among others) .

We note that the focus on complex nodal sets only makes sense for real analytic $(M, g)$. It is possible that
one can study ``almost analytic extensions'' of eigenfunctions for  general $C^{\infty}$ metrics in a similar
spirit,  but this is
just a speculation and certain key methods  break down when $g$ is not real analytic.  Hence the
results in the $C^{\infty}$ case are much less precise than in the real analytic case.

It should also be mentioned that much work on eigenfunctions concerns ground states, i.e. the first and second
eigenfunctions.  Unfortunately, we do not have the space or expertise to review the results on ground states in this
survey. For a sample we refer to \cite{Me}. Further, many if not all of the techniques and results surveyed here
have generalizations to Schr\"odinger operators $-\hbar^2 \Delta + V$. For the sake of brevity we confine the
discussion to the Laplacian.
\bigskip

\subsection{Notation}

The first  notational issue is whether to choose $\Delta_g$ to be the positive or negative Laplacian. The traditional
choice 
\begin{equation} \label{DELTADEF} \Delta_g =   \frac{1}{\sqrt{g}}\sum_{i,j=1}^n
\frac{\partial}{\partial x_i} \left( g^{ij} \sqrt{g}
\frac{\partial}{\partial x_j} \right).  \end{equation}
makes $\Delta_g$ is negative, but many authors call $- \Delta_g$ the Laplacian to avoid the minus signs. 
Also, the metric $g$ is often fixed and is dropped from the notation. 

A less traditional choice is to denote eigenvalues by $\lambda^2$ rather than $\lambda$. It is a common
convention in microlocal analysis and so we adopt it here. But we warn that $\lambda$ is often used to
denote $\Delta$-eigenvalues as is \cite{DF,H}.

We sometimes denote eigenfunctions of eigenvalue $- \lambda^2$ by $\phi_{\lambda}$ when we only  wish
to emphasize the corresponding  eigenvalue and do not need $\phi_{\lambda}$ to be part of an orthonormal
basis. For instance, when $\Delta_g$ has multiplicities as on the standard sphere or rational torus, there
are many possible orthonormal bases. But estimates on $\hcal^{n-1}(Z_{\phi_{\lambda}})$ do not depend
on whether $\phi_{\lambda}$ is included in the orthonormal basis. 

\subsection{Acknowledgements}

Thanks to C. D. Sogge and B. Shiffman for helpful comments on the exposition, and to S. Dyatlov for
a stimulating discussion of $L^{\infty}$ quantum ergodicity.

\section{\label{ESTIMATES} Basic estimates of eigenfunctions}

We start by collecting some classical elliptic  estimates and their applications to eigenfunctions. 

First,  the general Sobolev estimate:
Let $w \in C_0^{\infty}(\Omega)$ where $\Omega \subset \R^n$ with $n \geq 3$. Then there exists $C > 0$: 
$$\left(\int_{\Omega} |w|^{\frac{2n}{n-2}} \right)^{\frac{n-2}{n}} \leq C \int_{\Omega} |\nabla w|^2. $$


Next, we recall the Bernstein gradient estimates:

\begin{theo} \cite{DF3} Local eigenfunctions of a Riemannian manifold satisfy:

\begin{enumerate}

\item $L^{2}$ Bernstein estimate:
\begin{equation} \left( \int_{B(p,r)} |\nabla \phi_{\lambda}|^2 dV
\right)^{1/2} \leq \frac{C \lambda}{r} \left( \int_{B(p,r)} |
\phi_{\lambda}|^2 dV \right)^{1/2}. \end{equation}

\item $L^{\infty}$ Bernstein estimate: There exists $K > 0$ so that
\begin{equation}  \max_{x \in B(p, r)} |\nabla \phi_{\lambda}(x)|
\leq \frac{C \lambda^K}{r}  \max_{x \in B(p, r)}
|\phi_{\lambda}(x)|. \end{equation}

\item Dong's improved bound:
$$\max_{B_r(p)}|\nabla \phi_{\lambda}|\leq{C_1\sqrt{\lambda}\over r}\max_{B_r(p)}|\phi_{\lambda}| $$ for $ r\leq
C_2\lambda^{-1/4}.$

\end{enumerate}

\end{theo}

Another well-known estimate is the doubling estimate:

\begin{theo} \label{DOUBLE} (Donnelly-Fefferman, Lin)  and \cite{H} (Lemma 6.1.1) Let $\phi_{\lambda}$ be a global
eigenfunction of  a $C^{\infty}$ $(M, g)$
there exists $C = C(M, g)$ and $r_0$  such that for $0 < r < r_0$,
$$\frac{1}{Vol(B_{2r}(a))} \int_{B_{2r}(a)} |\phi_{\lambda}|^2
dV_g \leq e^{C \lambda} \frac{1}{Vol(B_{r}(a))} \int_{B_{r}(a)}
|\phi_{\lambda}|^2 dV_g. $$

Further,
\begin{equation} \max_{B(p, r)} |\phi_{\lambda}(x)| \leq
\left(\frac{r}{r'} \right)^{C \lambda} \max_{x \in B(p, r')}
|\phi_{\lambda}(x)|, \;\; (0 < r' < r). \end{equation}

\end{theo}

The doubling estimates imply  the vanishing order estimates. Let
$a \in M$ and suppose that $u(a) = 0$. By the vanishing order
$\nu(u, a)$ of $u$ at $a$ is meant the largest positive integer
such that $D^{\alpha} u(a) = 0$ for all $|\alpha| \leq \nu$. 

\begin{theo}
 Suppose that $M$ is compact and of dimension $n$. Then there exist constants $C(n), C_2(n)$ depending only on the dimension such that
the  the vanishing order $\nu(u, a)$ of $u$ at $a \in M$ satisfies
$\nu(u, a) \leq C(n) \; N(0, 1) + C_2(n)$ for all $a \in
B_{1/4}(0)$. In the case of a global  eigenfunction, $\nu(\phi_{\lambda},
a) \leq C(M, g) \lambda.$
\end{theo}

We now recall quantitative lower bound estimates. They follow from doubling estimates and also
from Carleman inequalities.

\begin{theo}\label{DFED}   Suppose that $M$ is compact and that $\phi_{\lambda}$ is a
global eigenfunction,  $\Delta \phi_{\lambda} = \lambda^2 \phi_{\lambda}$.
 Then for all $p, r$,  there exist $C, C' > 0$ so that $$ \max_{x \in B(p, r)} |\phi_{\lambda}(x)| \geq C'
e^{- C \lambda}.$$ \end{theo}

Local lower bounds on $\frac{1}{\lambda} \log |\phi_{\lambda}^{\C}|$  follow from doubling estimates. 
They imply that there exists $A, \delta > 0$ so that,  for any $\zeta_0 \in \overline{M_{\tau/2}}$,
\begin{equation} \label{DE} \sup_{\zeta \in B_{\delta}(\zeta_0)}  |\phi_{\lambda}(\zeta) | \geq C e^{- A \lambda}.  \end{equation}
Indeed, there of course exists a point $x_0 \in M$ so that $|\phi_{\lambda}(x_0)| \geq 1$. Any point
of $\overline{M}_{\tau/2}$ can be linked to this point by a smooth curve ofuniformly  bounded length. We then
choose $\delta$ sufficiently small so that the $\delta$-tube around the curve lies in $M_{\tau}$ and link
$B_{\delta}(\zeta)$ to $B_{\delta}(x_0)$ by a  chain of $\delta$-balls in $M_{\tau}$ where the number of links
in the chain is uniformly bounded above as $\zeta $ varies in $M_{\tau}$. If 
the balls are denoted $B_j$ we have $\sup_{B_{j + 1}} |\phi_{\lambda}| \leq e^{\beta \lambda}
\sup_{B_{j}} |\phi_{\lambda}| $ since $B_{j + 1} \subset 2 B_j$.  The growth
estimate implies that for any ball $B$,  $\sup_{2 B}  |\phi_{\lambda} | \leq e^{C \lambda} \sup_{B} |\phi_{\lambda}|$. 
Since the number of balls is uniformly
bounded,  $$1 \leq \sup_{B_{\delta}(x_0)} |\phi_{\lambda} | \leq e^{A \lambda} \sup_{B_{\delta}(\zeta)} |\phi_{\lambda}|$$
and  we get
a contradiction if no such $A$ exists. 

As an illustration, Gaussian beams such as highest weight
spherical harmonics decay at a rate $e^{- C \lambda d(x, \gamma)}$
away from a stable elliptic orbit $\gamma$. Hence if the closure
of an open set is disjoint from $\gamma$, one has a uniform
exponential decay rate which saturate the lower bounds.

We now recall sup-norm estimates of eigenfunctions which follow from the local Weyl law:
$$\begin{array}{l}
\Pi_{\lambda}(x,x)  := \sum_{\lambda_{\nu} \leq \lambda}
|\phi_{\nu}(x)|^2  = (2\pi)^{-n} \int_{p(x,\xi)\le \lambda}
d\xi + R(\lambda,x) \end{array}
$$
with uniform remainder bounds
$$|R(\lambda,x)|\le
C\lambda^{n - 1}, \quad x\in M.
$$
Since the integral in the local Weyl law is a continuous function
of $\lambda$ and since the spectrum of the Laplacian is discrete,
this immediately gives $$\sum_{\lambda_\nu=\lambda}|\phi_\nu(x)|^2
\le 2C\lambda^{n-1}$$  which in turn yields
\begin{equation}\label{03}
||\phi_{\lambda}||_{C^0}  = O(\lambda^{\frac{n-1}{2}})
\end{equation}
on any compact Riemannian manifold.

\subsection{$L^p$ estimates}

The classical Sogge estimates state that,  for any compact Riemannian manifold of dimension $n$, we have
\begin{equation}\label{i.8}
\frac{\|\phi_{\lambda}\|_p}{\|\phi_{\lambda}\|_2}=O(\lambda^{\delta(p)}), \quad 2\le p\le \infty,
\end{equation}
where 
\begin{equation} \delta(p)=
\begin{cases}
n(\tfrac12-\tfrac1p)-\tfrac12, \quad \tfrac{2(n+1)}{n-1}\le p\le \infty
\\
\tfrac{n-1}2(\tfrac12-\tfrac1p),\quad 2\le p\le \tfrac{2(n+1)}{n-1}.
\end{cases}
\end{equation}
Since we often use surfaces as an illustrantion, we note that in dimension $2$ one has for $\la\ge1$,
\begin{equation}\label{1.3}
\|\phi_\la\|_{L^p(M)}\le C\la^{\frac12 (\frac12 -\frac1p)} \|\phi_\la\|_{L^2(M)},
\quad 2\le p\le 6,
\end{equation}
and
\begin{equation}\label{1.4}
\|\phi_\la\|_{L^p(M)}\le C\la^{2(\frac12-\frac1p)-\frac12}\|e_\la\|_{L^2(M)},
\quad  6\le p\le \infty.
\end{equation}
These estimates are also sharp for the round sphere $S^2$. The first 
estimate, \eqref{1.3},  is saturated by  highest weight spherical harmonics. 
  The second estimate, \eqref{1.4}, is sharp due to the
zonal functions on $S^2$, which concentrate at points.   We go over these examples in \S \ref{S2}.

\section{Volume and equidistribution  problems on nodal sets and level sets}

We begin the survey by stating some of the principal problems an
results regarding nodal sets and more general level sets. Some of
the problems are intentionally stated in vague terms that admit a
number of rigorous formulations.

\subsection{\label{AREA} Hypersurface areas of nodal sets}

 One of the  principal problems  on nodal sets is to measure their
hypersurface volume. In the real analytic case, 
 Donnelly-Fefferman ( \cite{DF} (see also \cite{Lin}) ) proved:

\begin{theo}  
\label{DF}

Let $(M, g)$ be a compact real analytic  Riemannian
manifold, with or without boundary. Then there exist $c_1, C_2$
depending only on $(M, g)$ such that
$$c_1 \lambda \leq {\mathcal
H}^{m-1}(Z_{\phi_{\lambda}}) \leq C_2 \lambda, \;\;\;\;\;\;(\Delta
\phi_{\lambda} = \lambda^2 \phi_{\lambda}; c_1, C_2 > 0).
$$
\end{theo}

  The bounds  were  conjectured by S. T. Yau
 \cite{Y1,Y2} for all $C^{\infty}$ $(M,g)$, but this remains an open problem.
The lower bound was proved for all $C^{\infty}$ metrics for
surfaces, i.e. for $n = 2$ by Br\"uning \cite{Br}. For general
$C^{\infty}$ metrics the sharp upper and lower bounds are not
known, although there has been some recent progress that we
consider below.

The nodal hypersurface bounds are consistent with the heuristic that  $\phi_{\lambda}$ is the
analogue on a Riemannian manifold of a polynomial of degree
$\lambda$, since  the hypersurface volume of a real
algebraic variety is bounded by its degree.

\subsection{Equidistribution of nodal sets in the real domain}

The equidistribution problem for nodal sets is to study the behavior of the integrals \eqref{INT}
of general continuous functions $f$ over the nodal set.  Here, we   normalize the delta-function on the
nodal set by the conjectured surface volume of \S \ref{AREA}.  More precisely:
\bigskip

\noindent{\bf Problem} Find the weak* limits of the family of measures
$\{\frac{1}{\lambda_j} d S_{\lambda_j}\}$.
\medskip

Note that in the $C^{\infty}$ case we do not even know if this family has uniformly bounded mass. 
The high-frequency limit is the
semi-classical limit and generally signals increasing complexity
in the `topography' of eigenfunctions.

Heuristics from quantum  chaos suggests that eigenfunctions of quantum
chaotic systems should  behave like random waves. The random wave model
is defined and studied in  \cite{Z4} (see \S \ref{RWONB}) , and it is proved (see Theorem \ref{ZDSM})  that if one picks a random sequence
$\{\psi_{\lambda_j}\}$ of random waves of increasing frequency,  then almost surely
\begin{equation} \label{RANDOM} \frac{1}{\lambda_j} \int_{\hcal_{\psi_{\lambda_j}}} f
dS_{\lambda_j} \to \frac{1}{Vol(M)} \int_M f dV_g, \end{equation}
i.e. their nodal sets become equidistributed with respect to the
volume form on $M$. Hence the heuristic principle leads to the
conjecture that nodal sets of  eigenfunctions of quantum chaotic
systems should become equidistributed according to the volume
form.

The conjecture for eigenfunctions (rather than random waves)  is far beyond any current techniques and serves
mainly as inspiration for studies of equidistribution of nodal
sets. 

A yet more speculative conjecture in quantum chaosis that the
nodal sets should tend to $CLE_6$ curves in critical percolation. CLE refers to 
conformal loop ensembles, which are closed curves  related to $SLE$ curves. 
As above, this problem is motivated by a comparision to random waves, but for
these the problem is also completely open.   In  \S \ref{PERC} we  review the  heuristic principles
which started in condensed matter physics \cite{KH,KHS,Isi, IsiK, Wei} before migrating
to quantum chaos 
\cite{BS,BS2, FGS,BGS,SS,EGJS}.  It is dubious that such speculative conjectures can
be studied rigorously in the forseeable future, but we include them to expose the reader
to the
 questions   that are relevant to
physicists. 

\subsection{$L^1$ norms and nodal sets}

Besides nodal sets it is of much current interest to study $L^p$ norms of eigenfunctions globally
on $(M, g)$ and also of their restrictions to submanifolds. In fact, recent results show that nodal
sets and $L^p$ norms are related. For instance, in \S \ref{LB} we will use  the identity
\begin{equation} \label{ID}   ||\phi_{\lambda}||_{L^1} = \frac{1}{\lambda^2}   \int_{Z_{\phi_{\lambda}}}   |\nabla
\phi_{\lambda}| dS  \end{equation} relating the $L^1$ norm of $\phi_{\lambda}$ to a weighted integral over $Z_{\phi_{\lambda}}$ to obtain lower bounds on $\hcal^{n-1}(Z_{\phi_{\lambda}}). $ See \eqref{1}. 

Obtaining lower bounds on $L^1$ norms of eigenfunctions is closely related to finding upper bounds on $L^4$
norms. The current bounds are nowhere near sharp enough to improve nodal set bounds.

\subsection{Critical points and values}

A closely related   problem in the `topography' of Laplace
eigenfunctions $\phi_{\lambda}$ is to determine the asymptotic
distribution of their critical points
$$C(\phi_{\lambda}) = \{x: \nabla \phi_{\lambda}(x) = 0\}. $$ This
problem is analogous to that of measuring the  hypersurface area
$\hcal^{n-1}(Z_{\lambda})$ of the nodal (zero) set of
$\phi_{\lambda}$, but it is yet more complicated due to the
instability of the critical point set as the metric varies.  For a
generic metric, all eigenfunctions are Morse functions and the
critical point set is discrete. One may ask to count the number of
critical points asymptotically as $\lambda \to \infty$. But there
exist metrics (such as the flat metric on the torus, or the round
metric on the sphere) for which the eigenfunctions have critical
manifolds rather than points.  To get around this obstruction, we
change the problem from counting critical points to counting
critical values
$$CV(\phi_{\lambda}) = \{\phi_{\lambda}(x): \nabla \phi_{\lambda}(x) = 0\}. $$
Since a real analytic function on a compact real
analytic manifold has only finitely many critical values, 
eigenfunctions of real analytic Riemannian manifolds $(M, g)$ have
only finitely many critical values and we can ask to count them.  Moreover for generic real
analytic metrics, all eigenfunctions are Morse functions and there
exists precisely one critical point for each critical value. Thus,
in the generic situation, counting critical values is equivalent
to counting critical points. To our knowledge, there are no results on this problem,
although it is possible to bound the $\hcal^{n-1}$-measure of $C(\phi_{\lambda})$ (see Theorem
 \cite{Ba}). However $\hcal^{n-1} (C(\phi_{\lambda})) = 0$ in the generic case
and in special cases where it is not zero the method is almost identical to bounds on the nodal set. 
Thus, such results bypass all of the difficulties in counting critical values. We will present one
new (unpublished)  result which generalizes eqref{ID} to critical points. But the resulting identity
is much more complicated than for zeros.

Singular points are critical points which occur on the nodal sets.
We  recall (see \cite{H, HHL,  HHON})  that the the singular
set
$$\Sigma(\phi_{\lambda})= \{x \in Z_{\phi_{\lambda}}: \nabla
\phi_{\lambda}(x) = 0\} $$ satisfies $\hcal^{n-2}(\Sigma
(\phi_{\lambda})) < \infty$. Thus, outside of a codimension one
subset, $Z_{\phi_{\lambda}}$ is a smooth manifold, and the
Riemannian surface measure  $d S = \iota_{\frac{\nabla
\phi_{\lambda}}{|\nabla \phi_{\lambda}|}} dV_g$ on
$Z_{\phi_{\lambda}}$ is well-defined. We refer to \cite{HHON,H,HHL,HS} for background.

\subsection{Inradius}

It is known that in dimension two, the minimal possible area of a
nodal domain of a Euclidean eigenfunction  is $\pi (
\frac{j_1}{\lambda})^2$. This follows from the two-dimensional
Faber-Krahn inequality,
$$\lambda_k(\Omega) \mbox{Area}(D) = \lambda_1(D) \mbox{Area}(D)  =  \geq \pi j_1^2$$
where $D$ is a nodal domain in $\Omega$. In higher dimensions, the
Faber-Krahn inequality shows that on any Riemannian manifold the
volume of any nodal domain is $\geq C \lambda^{-n}$ \cite{EK}.

Another size measure of a nodal domain is its inradius
$r_{\lambda}$, i.e. the radius of the largest ball contained
inside the nodal domain. As can be seen from computer graphics
(see e.g. \cite{HEJ}), there are a variety of `types' of nodal
components.
 In \cite{Man3}, Mangoubi proves that
\begin{equation} \frac{C_1}{\lambda} \geq r_{\lambda} \geq
\frac{C_2}{\lambda^{\half k(n)} (\log \lambda)^{2n-4}},
\end{equation} where $k(n) =  n^2 - 15n/8 + 1/4$; note that
eigenvalues in \cite{Man} are denoted $\lambda$ while here we
denote them by $\lambda^2$. In dimension $2$, it  is known (loc.cit.) that
\begin{equation} \frac{C_1}{\lambda} \geq r_{\lambda} \geq
\frac{C_2}{\lambda}. \end{equation}

\subsection{\label{DECOMP} Decompositions of $M$ with respect to $\phi_{\lambda}$}

There are two natural decompositions (partitions) of $M$ associated to an eigenfunction (or any smooth function). 
\medskip

\noindent{\bf (i)} Nodal domain decomposition.
\medskip

First is the  decomposition of $M$ into nodal domains of $\phi_{\lambda}$.  As in \cite{PS} we denote the
collection of nodal domains by $\acal(\phi_{\lambda})$ and denote a nodal domain by $A$. Thus,
$$M \backslash Z_{\phi_{\lambda}}= \bigcup_{A \in \acal(\phi_{\lambda})} A. $$ When $0$ is a regular
value of $\phi_{\lambda}$ the level sets are smooth hypersurfaces and one can ask how many components
of $Z_{\phi_{\lambda}}$ occur, how many components of the complement, the topological types of components
or the combinatorics of the set of domains.  When $0$ is a singular value, the nodal set is a singular
hypersurface and  can be connected but
one may ask similar questions taking multiplicities of the singular points into account. 

To be precise, let 
$$\mu(\phi_{\lambda})= \# \acal (\phi_{\lambda}), \;\;\;  \nu(\phi_{\lambda}) = \# \;\mbox{components of}\;
Z(\phi_{\lambda}). $$
The best-known problem is to estimate $\mu(\phi_{\lambda})$. According to the Courant nodal domain
theorem, $\mu(\phi_{\lambda_n}) \leq n$. In the case of spherical harmonics, where many orthonormal bases
are possible, it is better to estimate the number in terms of the eigenvalue, and the estimate has
the form $\mu(\phi_{\lambda}) \leq C(g) \lambda^{m}$ where $m = \dim M$ and $C(g) > 0$ is a constant
depending on $g$.  In dimension 2, Pleijel used the Faber-Krahn theorem to  improve the bound to
$$\limsup_{\lambda \to \infty} \frac{\mu(\phi_{\lambda})}{\lambda^2} \leq \frac{4}{j_0^2} < 0. 69 $$
where $j_0$ is the smallest zero of the $J_0$ Bessel function. 

A wide variety of behavior is exhibited by spherical harmonics of degree $N$. We review the definitions below. 
The even degree harmonics  are equivalent to  real projective plane curves of degree $ N$.  But each point of
$\R \PP^2$ corresponds to a pair of points of $S^2$ and at most one component of the nodal set is invariant
under the anti-podal map. For other components, the anti-podal map takes a component to a disjoint component.
Thus there are essentially twice the number of components in the nodal set as components of the associated
plane curve. 

 As discussed in \cite{Ley},  one has;

\begin{itemize}

\item Harnack's inequality: the number of components of any irreducible  real projective plane curve is bounded by 
$g + 1$ where $g$ is the genus of the curve.

\item If $p$ is a real projectove plane curve of degree $N$ then its genus is given by Noether's formula
$$g = \frac{(N-1)(N-2)}{2} - \sum_{\mbox{singular points x} }  \frac{\mbox{ord}_p(x) (\mbox{ord}_p(x) -1)}{2} $$
where $\mbox{ord}_p(x)$ is the order of vanishing of $\phi_{\lambda}$ at $x$.  Thus, the number of components
is $\leq  \frac{(N-1)(N-2)}{2}  + 1$ for a non-singular irreducible plane curve of degree $N$.

\end{itemize}

Curves which achieve the maximum are called $M$-curves.  Also famous are Harnack curves, which  are $M$ curves for which there
exist three distinct lines $\ell_j$  of $\R \PP^2$ and three distinct arcs $a_j$ of the curve on one component
so that $\# a_j \cap \ell_j = N$.  It follows from Pleijel's bound that nodal sets of spherical harmonics cannot
be maximal for large $N$, since half of the Pleijel bound is roughly $.35 N^2$ which is below the
threshold $ .5 N^2 + O(N)$ for maximal curves.

Associated to the collection of nodal domains is its  incidence  graph $\Gamma_{\lambda}$, which has 
 one vertex for each nodal domain, and one edge linking each pair of nodal domains with a common 
boundary component. Here we assume that $0$ is a regular value of $\phi_{\lambda}$ so that the nodal
set is a union of embedded submanifolds.  The Euler characteristic of the graph is the difference beween the number
of nodal domains and nodal components. In the non-singular case, one  can convert the nodal decomposition into
a cell decomposition by attaching a one cell between two adjacent components, and then
one has  $\mu(\phi_{\lambda}) = \nu(\phi_{\lambda}) + 1$ (see Lemma 8 of \cite{Ley}). 

The possible topological types of arrangements of nodal components of spherical harmonics is studied
in \cite{EJN}.  They prove that for any $m \leq N$  with $N-m$ even and for every set of $m$ disjoint closed
curves whose union is invariant with respect to the antipodal map, there exists an eigenfunction whose nodal
set has the topological type of the union of curves. Note that these spherical harmonics have relatively few nodal
domains compared to the Pleijel bound. It is proved in \cite{NS} that random spherical harmonics have
$a N^2$ nodal components for some (undetermined) $a > 0$.
\medskip

\noindent{\bf Morse-Smale decomposition}

For generic metrics, all eigenfunctions are Morse functions \cite{U}.  Suppose that $f: M \to \R$ is a Morse function. For each critical point $p$
let $W^sp^s$ (the  stable or descending cell through p)  denote the union of the gradient flow lines which have $p$ as their initial point, i.e. their $\alpha$-limit
point. Then $W_p$ is a cell of dimension $\lambda_p = $ number of negative eigenvalues of $H_p f$. By the Morse-Smale
decomposition we mean the decomposition
$$M = \bigcup_{p: df(p) = 0} W_p^s$$  It is  not a good cell
decomposition in general.  If we change $f$ to $-f$ we
 get the decomposition into ascending (unstable) cells
$M = \bigcup_{p: df(p) = 0} W_p^u. $
If the intersections $W_p^s \cap W_qu$ are always transversal then $\nabla f$ is said to be  transversal. 
In this case $\dim (W_p^s \cap W_q^u) = \lambda_p - \lambda_1 + 1$ and the number of gradient curves joining
two critical points whose Morse index differs by 1 is finite. 

We are mainly interested  in the stable  cells of maximum dimension, i.e. basins of attraction of the 
gradient flow to each local minimum. We then have the partition
\begin{equation} \label{MSD} M = \bigcup_{p \; \mbox{a local min} } W_p^s. \end{equation}  This decomposition is somtimes used in condensed matter physics
(see e.g. \cite{Wei}) 
and in computational shape analysis \cite{Reu}.  In dimension two, the surface is
partitioned into `polygons' defined by the basins of attraction of
the local minima of $\phi $. The boundaries of these polygons are
gradient lines of $\phi$ which emanate from saddle points. The
vertices occur at local maxima.

\begin{center} 
\includegraphics[scale=0.6]{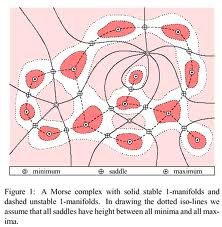}  

 \end{center}

An eigenfunction is a Neumann eigenfunction in each basin since the boundary is formed by integral curves
of $\nabla \phi_{\lambda}$.  Possibly it is `often'  the first non-constant Neumann eigenfunction (analogously to
$\phi_{\lambda}$ being the lowest Dirichlet eigenfunction in each nodal domain), but this does not seem
obvious. Hence it is not clear how to relate the global eigenvalue $\lambda^2$ to the Neumann eigenvalues
of the basins, which would be useful in understanding the areas or diameters of these
domains.  Note that $$\int_{W_p^s} \phi_j d V = \int_{\partial W_p^s} \nabla \phi_{\lambda} \cdot \nu d S = 0,$$ where $\nu$ is the unit normal to $\partial W_p^s$,
since $\nabla \phi_{\lambda} $ is tangent to the boundary. In particular,
the intersection $Z_{\phi_{\lambda}} \cap W_p^s$ is non-empty and is a connected hypersurface which separates
$W_p^s$ into two components on which $\phi_{\lambda}$ has a fixed sign. 
To our knowledge, there do not exist rigorous  results bounding  the number of local minima from above or below, i.e.
there is no analogue of the Courant upper bound for the number of local minima basins.  It is possible to obtain statstical
results on the asymptotic expected number of local minima, say for random spherical
harmonics of degree $N$.  The methods of \cite{DSZ}  adapt to this problem if one replaces holomorphic
Szeg\"o kernels by spectral projections  (see also \cite{Nic}.)  Thus, in a statistical sense it is much simpler to count the number
of ``Neumann domains" or Morse-Smale basins than to count nodal domains as in \cite{NS}.

\section{Examples}

Before proceeding to rigorous results, we go over a number of explicitly solvable
examples. Almost by definition, they are highly non-generic and in
fact represent the eigenfunctions of quantum integrable systems. Aside from being
explicitly solvable, the eigenfunctions of this section are extremals for a number of problems.

\subsection{\label{Flat tori section} Flat tori}

The basic real valued   eigenfunctions are  $\phi_{k}(x) = \sin \langle
k, x \rangle$ or $\cos \langle k, x \rangle$  ($k \in \Z^n$) on the flat torus ${\bf T} =
\R^n/\Z^n$. The zero set consists of the hyperplanes $\langle k, x
\rangle = 0 $ mod $2 \pi$ or in other words $\langle x, \frac{k}{|k|} \rangle \in \frac{1}{2 \pi |k| } \Z$. 
Thus the normalized  delta function $\frac{1}{|k|} d S|_{Z_{\phi_k}}$  tends to uniform distribution along
rays in the lattice $\Z^n$.  The lattice arises as the joint spectrum of the commuting operators
$D_j = \frac{ \partial}{i \partial x_j}$ and is a feature of quantum integrable systems.

\begin{center} 
\includegraphics[scale=0.3]{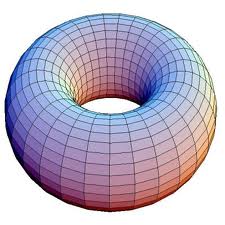}  

 \end{center}

The critical point equation for $\cos \langle k, x \rangle$ is $k \sin \langle k, x \rangle = 0$ and
is thus the same as the nodal equation. In particular, the critical point sets are hypersurfaces in this
case. There is just one critical value $= 1$. 

Instead of the square torus we could consider $\R^n/L$ where $L \subset \R^n$ is a lattice of full rank. 
Then the joint spectrum becomes the dual lattice $L^*$ and the eigenfunctions are $\cos \langle  k, x \rangle,
\sin \langle k, x \rangle$ with $k \in L^*$. 

The real eigenspace $\hcal_{\lambda} = \R-\mbox{span}\{\sin \langle k, x \rangle, \cos \langle k, x \rangle:
|k| =  \lambda\}$ is of multiplicity 2 for generic $L$ but has unbounded multiplicity in the case of $L = \Z^n$
and other rational lattices. In that case, one may take linear combinations of the basic eigenfunctions and
study their nodal and critcal point sets. For background,  some recent results and further references we refer  to \cite{BZ}.

\medskip

\subsection{\label{S2} Spherical harmonics on $S^2$}
\medskip

The spectral decomposition for the Laplacian is the orthogonal sum
of the spaces of spherical harmonics of degree $N$,
\begin{equation} L^2(S^2) = \bigoplus_{N=0}^{\infty} V_N,\;\;\;
\Delta |_{V_N} = \lambda_N Id.
\end{equation}
The eigenvalues   are   given by $\lambda_N^{S^2} = N (N + 1)$ and
the multiplicities are given by $m_N = 2 N + 1 $. A standard basis
is given by the (complex valued) spherical harmonics $Y^N_m$ which
transform by $e^{i m \theta}$ under rotations preserving the
poles.

The $Y^N_m$ are complex valued, so we study the nodal sets of their real and imaginary parts. 
They are separable, i.e. factor as $C_{N, m} P^N_m(r) \sin (m \theta)$ (resp. $\cos (m \theta)$
where $P^N_m$ is an associated Legendre function. 
Thus the nodal sets of these special eigenfunctions form a checkerboard pattern that can be
explicitly determined from the known behavior of zeros of associated Legendre functions.  See the
first image in the illustration below.

\begin{center} 
\includegraphics[scale=0.6]{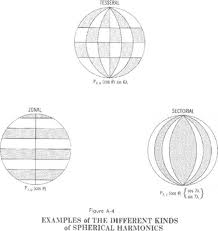} 
 \end{center}

Among the basic spherical harmonics, there are two special ones: the zonal spherical harmonics (i.e. the
rotationally invariant harmonics) and the highest weight spherical harmonics.  Their nodal sets and intensity
plots are graphed in the bottom two images, respectively. 

Since the  zonal spherical harmonics $Y^N_0 $ on $S^2$ are real-valued and rotationally invariant, their
zero sets consist of a union of circles, i.e. orbits of the $S^1$ rotation action around the third axis. 
 It is well known that $Y^N_0(r) =
\sqrt{\frac{(2 N + 1)}{2 \pi}} P_N(\cos r)$, where $P_N$ is the
$N$th Legendre function and the normalizing constant is chosen so
that $||Y^N_0||_{L^2(S^2)} = 1$, i.e.  $4 \pi \int_0^{\pi/2}
|P_N(\cos r)|^2 dv(r) = 1, $ where $dv(r) = \sin r dr$ is the
polar part of the area form. Thus the circles occur at values of $r$ so that $P_N(\cos r) =0$. All zeros
of $P_N(x) $ are real and it has $N$ zeros in $[-1,1]$. It is classical that the zeros $r_1, \dots, r_N$
of $P_N(\cos r)$ in $(0, \pi)$  become uniformly distributed with respect to $dr$ \cite{Sz}.  It is 
also known that $P_N$ has $N-1$ distinct critical points \cite{C,Sz2} and so the critical points of $Y^N_0$
is a union of $N-1$ lattitude circles.


We now consider  real or imaginary parts of  highest
weight spherical harmonics $Y^N_N$. Up to a scalar multiple, 
$Y_N(x_1, x_2, x_2) = (x_1 + i x_2)^N$ as a harmonic polynomial on $\R^3$. It is an example of a  Gaussian beams along
a closed geodesic $\gamma$ (such as exist on equators of convex
surfaces of revolution). See  \cite{R} for  background on Gaussian beams on Riemannian
manifolds.  

The real and imaginary parts are of the form $P_N^N(\cos r) \cos N \theta, P_N^N(\cos r) \sin N \theta$ where
$P_N^N(x)$
is a constant multiple of $(1 - x^2)^{N/2}$ so $P_N^N(\cos r) = (\sin r)^N. $
The factors $\sin N \theta, \cos N \theta$ have $N$ zeros on $(0, 2 \pi)$. The Legendre funtions satisfy the recursion relation $P_{\ell + 1}^{\ell + 1}
= -(2 \ell + 1) \sqrt{1 - x^2} P_{\ell}^{\ell}(x)$ with $P_0^0 = 1$ and therefore have no real zeros away
from the poles. Thus, the nodal set consists of
$N$ circles of longitude with equally spaced intersections with the equator.

The critical points are solutions of  the pair of equations $\frac{d}{dr} P_N^N(r) \cos N \theta = 0, P_N^N \sin N \theta  = 0$. 
Since $P_N^N$ has no zeros away from the poles, the second equation forces the zeros to occur at zeros of $\sin N \theta$. 
But then $\cos N \theta \not = 0$ so the zeros must occur at the zeros of  $\frac{d}{dr} P_N^N(r) $. The  critical points only
occur  when
$\sin r = 0$ or $\cos r = 0$ on $(0, \pi)$. 
 There  are critical
points  at the poles  where $Y_N^N$ vanishes to order $N$ and there is a local maximum  at the value $r = \frac{\pi}{2}$ of the equator. Thus, $\Re Y_N^N$ has $ N$ isolated critical points on the equator and multiple critical points at the poles.

We note that $|\Re Y^N_N|^2$ is a Gaussian bump with peak along the equator in the radial direction. Its radial
Gaussian decay implies that it extremely small outside a $N^{\half}$ tube around the equator. The complement
of this tube is known in physics as the classically forbidden region. We see that the nodal set stretches a long
distance into the classically forbidden region. This creates problems for nodal estimates since  exponentially
small values  (in terms of the eigenvalue)  are  hard to distinguish from zeros. On the other hand, it has only
two (highly multiple) critical points away from the equator. 

\subsection{Random spherical harmonics and chaotic eigenfunctions}

The examples above exhibit quite disparate behavior but all are eigenfunctions of quantum integrable systems. 
We do not review the general results in this case but plan to treat this case in an article in preparation \cite{Z9}. 

We now contrast the nodal set behavior with that of random spherical harmonics (left)
and a chaotic billiard domain  (the graphics  are due to E. J. Heller).

\begin{center} 
\includegraphics[scale=0.15]{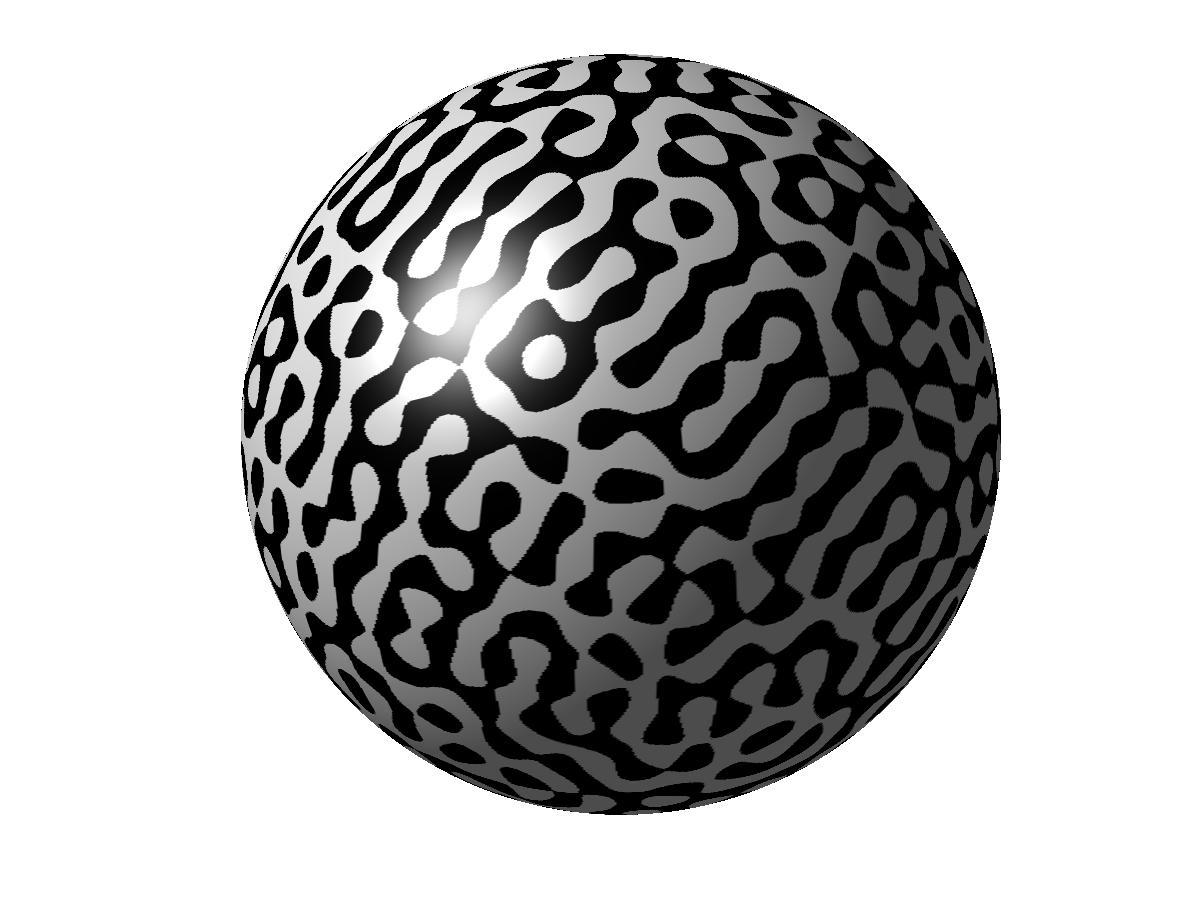}  
\includegraphics[scale=0.25]{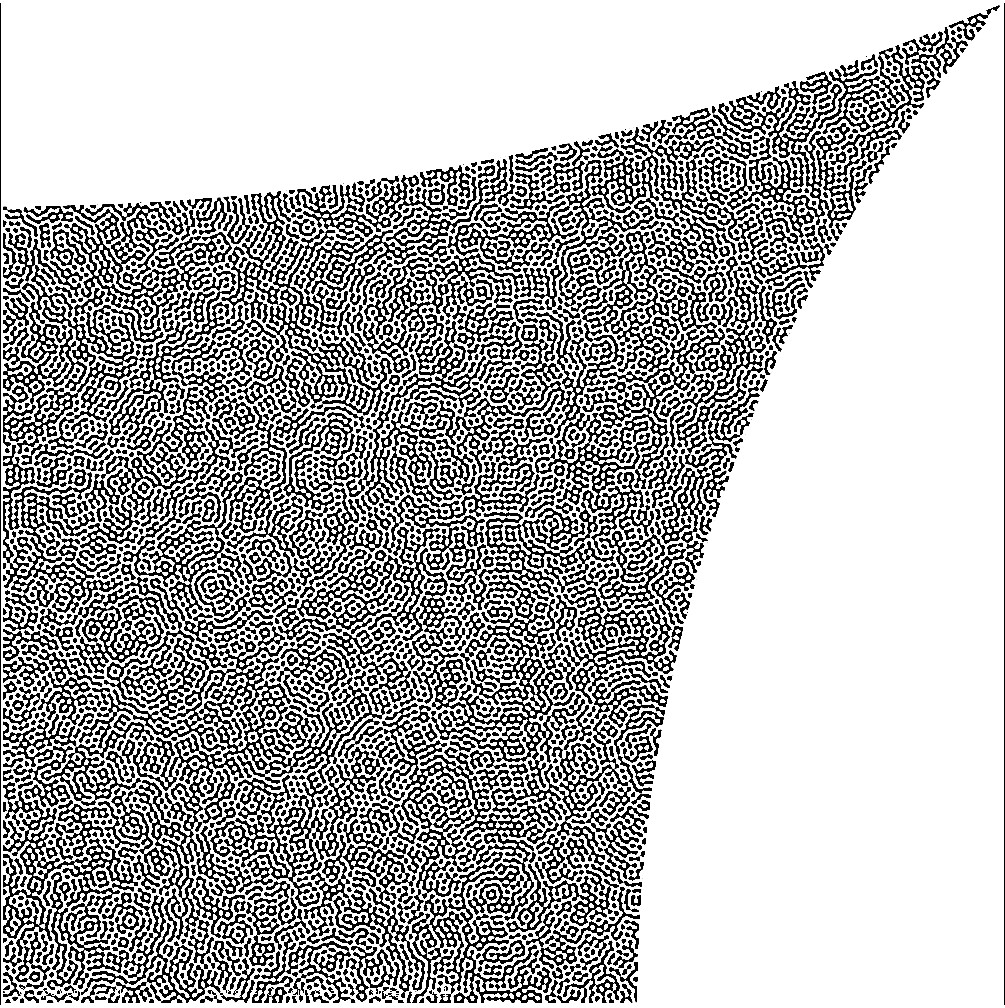}
 \end{center}

\section{\label{LB} Lower bounds on hypersurface areas of nodal sets and level sets in the $C^{\infty}$ case}

In this section we review some recent lower bounds on $\hcal^{n-1}(Z_{\phi_{\lambda}})$ from
\cite{CM,SoZ,HS,HW}.   Before we begin, we recall the co-area formula:  Let $f: M \to \R$ be Lipschitz. Then
for any continuous function $u$ on $M$,
$$\int_M u(x)  d V = \int_{\R}  (\int_{f^{-1}(y)} u \frac{dV}{df} )\; d y. $$
Equivalently, 
$$\int_M u(x)  ||\nabla f|| d V = \int_{\R}  (\int_{f^{-1}(y)} u  d \hcal^{n-1} )d y. $$ We refer to
$\frac{dV}{df}$ as the ``Leray form" on the level set $\{f = y\}$.  Unlike the Riemannian
surface measure $dS = d \hcal^{n-1}$ it depends on the choice of defining function $f$. 
The surface measures are related by  $d \hcal^{n-1} = |\nabla f| \frac{dV}{df}.$
For background, see  Theorem 1.1 of \cite{HL}.

\begin{theo}\label{theorem}  
As above assume that $||\phi_{\lambda}|| = 1$. Then
\begin{equation}\label{main}
\lambda \left(\, \int_M |\phi_\lambda|\, dV_g\, \right)^2 \le C  \hcal^{n-1} (Z_{\phi_{\lambda}}), \quad \lambda \ge1,
\end{equation}
for some uniform constant $C$.  Consequently,
\begin{equation}\label{5}
\lambda^{\frac{3-n}2}\lesssim   \hcal^{n-1} (Z_{\phi_{\lambda}}), \quad \lambda\ge1.
\end{equation}
\end{theo}

Inequality \eqref{5} follows from \eqref{main} and the lower bounds in \cite{SoZ}
\begin{equation}\label{3}
\lambda^{\frac{1-n}4} \lesssim \int_M |\phi_\lambda| \, dV_g.
\end{equation}
The lower bound \eqref{5} was first proved  by Colding and Minicozzi~\cite{CM}.  A slightly weaker
result was proved in \cite{SoZ} by a different method inspired by the article of R. T. Dong \cite{Dong} which
was successively improved in \cite{HW,HS}  to the same bound as \cite{CM}.

The $L^1$-lower bounds in \eqref{3}  were proved in \cite{SoZ} using 
H\"older's inequality and the 
$L^p$ eigenfunction estimates of Sogge \cite{Sog}
for the range where $2<p\le \frac{2(n+1)}{n-1}$.  As will be seen below, the estimate is sharp for
Gaussian beams such as highest weight spherical harmonics. The random wave model would predict that
$||\phi_{\lambda}||_{L^1} \geq C(M, g) > 0$ and that would imply  Yau's conjectured lower bound in
the `chaotic case'. This provides ample motivation to study $L^1$ norms of eigenfunctions on manifolds
with ergodic geodesic flow. 

The proof of Theorem~\ref{theorem} is based on an identity  from \cite{SoZ} (inspired by an identity
in \cite{Dong}): 
\begin{equation}\label{1}
\int_M |\phi_{\lambda}| \, (\Delta_g+\lambda^2)f \, dV_g = 2 \int_{Z_{\phi_{\lambda}} }|\nabla_g \phi_\lambda| \, f
\, dS,
\end{equation}
We recall that $dS$ is the Riemannian surface measure on $Z_{\phi_{\lambda}}$. In \cite{SoZ}, we substituted
$f = 1$ and used \eqref{3} to obtain a power lower bound. 
This was improved in \cite{HW,HS} by  putting  $f\equiv1$ but applying  Schwarz's inequality to get
\begin{equation}\label{2}
\lambda^2 \int_M |\phi_\lambda| \, dV_g \le 2 (\hcal^{n-1}(Z_{\phi_{\lambda}}))^{1/2} \, \left(\, 
\int_{Z_{\phi_{\lambda}}}|\nabla_g \phi_\lambda|^2 \, dS\, \right)^{1/2}.
\end{equation}
The final important step is to show that
\begin{equation}\label{4}
\int_{Z_{\phi_{\lambda}}}|\nabla_g \phi_\lambda|^2 \, dS_ \lesssim \lambda^{3}
\end{equation}
by choosing 
\begin{equation}\label{6}
f=\big(\, 1+\lambda^2 \phi_\lambda^2 + |\nabla_g \phi_\lambda|^2_g \, \bigr)^{\frac12}.
\end{equation}
From \eqref{1}  it follows that
\begin{equation*}
2\int_{Z_{\phi_{\lambda}}}|\nabla_g \phi_\lambda|^2_g dS
\le \int_M |\phi_\lambda| \, (\Delta_g+\lambda^2)\big(\,1+ \lambda \phi_\lambda^2 + |\nabla_g \phi_\lambda|^2 \, \bigr)^{\frac12} \, dV_g.
\end{equation*}
By the  $L^2$-Sobolev bounds
$ \|  \phi_\lambda\|_{H^s(M)}=O(\lambda^s)$ it follows that the ``second term" of the right side satisfies
$$\lambda^2 \int_M |\phi_\lambda| \, \big(\,1+ \lambda^2 \phi_\lambda^2 + |\nabla_g \phi_\lambda|^2_g \, \bigr)^{\frac12}\, dV_g = O(\lambda^{3}),
$$
and thus to prove \eqref{4}, it suffices to show that the ``first terms" satisfies
\begin{equation}\label{8}
\int_M |\phi_\lambda| \, \Delta_g\big(\, 1+\lambda^2 \phi_\lambda^2 + |\nabla_g \phi_\lambda|^2_g \, \bigr)^{\frac12}
\, dV_g = O(\lambda^3).
\end{equation}
We refer to \cite{HS} for further details on this bound. A simpler approach was suggested by 
 W. Minicozzi, who  pointed out that  \eqref{4} also follows from the identity
\begin{equation}\label{dividentity}
2\int_{Z_\lambda}|\nabla_g e_\lambda|^2 \, dS_g = 
-\int_M \sgn (e_\lambda) \, \text{div}_g\big(\, |\nabla_g e_\lambda| \, \nabla_g e_\lambda\, \bigr) \, dV_g.
\end{equation}
This approach is used in \cite{Ar} to generalize the nodal bounds to Dirichlet and Neumann
eigenfunctions of  bounded domains.  In the next setction we explain how to obtain more general identities.

There are several ways to prove the identity in \eqref{1}. One way to see it is that
 $d\mu_{\lambda}: = ( \Delta+ \lambda^2) |\phi_{\lambda}| dV = 0$
away from $\{\phi_{\lambda} = 0\}$. Hence   this
distribution is a positive measure supported on $Z_{\phi_{\lambda}}.$ To determine
the coefficient of the surface measure $dS$ we  calculate the limit as $\delta \to 0$ of the integral
$$\int_M f (\Delta + \lambda^2) |\phi_{\lambda}| dV =
\int_{|\phi_{\lambda}| \leq \delta} f (\Delta + \lambda^2)
|\phi_{\lambda}|  dV. $$ Here $f \in C^2(M)$ and  with no loss of generality we may assume that $\delta$ is a regular value of
$\phi_{\lambda}$ (by Sard's theorem).   By  the Gauss-Green theorem,
$$ \int_{|\phi_{\lambda}| \leq \delta} f (\Delta + \lambda^2)
|\phi_{\lambda}|  dV - \int_{|\phi_{\lambda}| \leq \delta}
|\phi_{\lambda}|   (\Delta + \lambda^2) f  dV =
\int_{|\phi_{\lambda}| = \delta} (f \partial_{\nu}
|\phi_{\lambda}| - |\phi_{\lambda}|
\partial_{\nu} f) dS. $$
Here, $\nu$ is the outer unit normal and $\partial_{\nu}$ is the
associated directional derivative. For $\delta
> 0$, we have
\begin{equation} \label{NORMAL} \nu = \frac{\nabla \phi_{\lambda}}{|\nabla \phi_{\lambda}|}
\;\; \mbox{on}\;\; \{\phi_{\lambda} = \delta\}, \;\;\; \nu = -
\frac{\nabla \phi_{\lambda}}{|\nabla \phi_{\lambda}|} \;\;
\mbox{on}\;\; \{\phi_{\lambda} = - \delta\}. \end{equation}

 Letting $\delta \to 0$ (through the sequence of regular values)  we get
$$ \int_M f (\Delta + \lambda^2) |\phi_{\lambda}| dV = \lim_{\delta \to 0} \int_{|\phi_{\lambda}| \leq \delta} f (\Delta + \lambda^2)
|\phi_{\lambda}|  dV = \lim_{\delta \to 0}
 \int_{|\phi_{\lambda}|
= \delta}   f
\partial_{\nu} |\phi_{\lambda}|dS. $$
Since $|\phi_{\lambda}| = \pm \phi_{\lambda}$ on $\{\phi_{\lambda}
= \pm \delta\}$ and by (\ref{NORMAL}), we see that
$$ \begin{array}{lll} \int_M f (\Delta + \lambda^2) |\phi_{\lambda}| dV & = & \lim_{\delta \to 0}
 \int_{|\phi_{\lambda}|
= \delta}   f \frac{\nabla |\phi_{\lambda}| }{|\nabla
|\phi_{\lambda}| | } \cdot \nabla|\phi_{\lambda}|dS \\ && \\ & = &
\lim_{\delta \to 0} \sum_{\pm}
 \int_{\phi_{\lambda}
= \pm  \delta}   f |\nabla\phi_{\lambda}| dS \\ && \\& = & 2
\int_{Z_{\phi_{\lambda}}}   f |\nabla \phi_{\lambda}| dS.
\end{array}
$$
The Gauss-Green formula and limit are justified by the fact that the singular set $\Sigma_{\phi_{\lambda}}$
has codimension two. We refer to \cite{SoZ} for further details.

The $L^1$ lower bound of  \eqref{3} follows from  eigenfunction estimates in
\cite{Sog}, which say that
$$\|\phi_\lambda\|_{L^p}\le C \lambda^{\frac{(n-1)(p-2)}{4p}},
\quad 2<p\le \tfrac{2(n+1)}{n-1}.
$$
If we pick such a $2<p<\tfrac{2(n+1)}{n-1}$, then by H\"older's
inequality, we have
$$1=\|\phi_\lambda\|_{L^2}^{1/\theta}\le \|\phi_\lambda\|_{L^1} \, \|\phi_\lambda\|_{L^p}^{\frac1\theta-1}\le  \|\phi_\lambda\|_{L^1}\bigl(\, C\lambda^{\frac{(n-1)(p-2)}{4p}}\, \bigr)^{\frac1\theta-1},
\quad
\theta=\tfrac{p}{p-1}(\tfrac12-\tfrac1p)=\tfrac{(p-2)}{2(p-1)},$$
which  implies $\|\phi_\lambda\|_{L^1}\ge
c\lambda^{-\frac{n-1}4}$, since $(1-\tfrac1\theta)
\tfrac{(n-1)(p-2)}{4p}=\tfrac{n-1}4$.

\begin{rem} One can also integrate the identity \eqref{1} over a basin of attraction of a local minimum
(or maximum)  \eqref{MSD}, since
the boundary term vanishes. Thus we get an identity between the $L^1$ norm of $\phi_{\lambda}$ on
each basin and the $|\nabla \phi_{\lambda}| dS$-measure of the nodal line inside the basin.   \end{rem}

\subsection{\label{MG} More general  identities}

For any function $\chi$, we have 
$$\Delta \chi(\phi) = \chi''(\phi) |\nabla \phi|^2 - \lambda^2 \chi'(\phi) \phi. $$ We then
take $\chi$ to be the meromorphic family of homogeneous distribution $x_+^s$. 
We recall that for  $\Re a > -1$, 
$$x_+^a := \left\{ \begin{array}{ll} x^a, & x \geq 0 \\ & \\
0, & x < 0. \end{array} \right. $$
The family extends to  $a \in \C$ as a meromorphic family of distributions with simple poles
at $a = -1, -2, \dots, -k, \dots$  using the equation 
$\frac{d}{dx} x_+s = s x_+^{s-1}$ to extend it one unit strip at a time.  One can convert $x_+^s$ to the  holomorphic family 
$$\chi_+^{\alpha} = \frac{x_+^{\alpha}}{\Gamma(\alpha + 1)},\;\;\;\;\mbox{with}\;\; \chi_+^{-k} = \delta_0^{(k-1)}. $$

The identity we used above belongs to the family, 
\begin{equation} \label{GEN}  (\Delta + s \lambda^2) \phi_+^s = s(s - 1)  |\nabla \phi|^2 \phi_+^{s-2}. \end{equation}
Here $\phi_+^s = \phi^* x_+^s$ has poles at $s = -1, -2, \cdots$. The calculation in \eqref{1}  used $|\phi|$
but is equivalent to using \eqref{GEN} when $s = 1$. Then $\phi_+^{s-2}$ has a pole when $s = 1$ with
residue $\delta_0(\phi) = \frac{d S}{|\nabla \phi|} d S |_{Z_{\phi_{\lambda}}}$; it is  cancelled by
the factor $s - 1$ and  we obtain \eqref{1}. This calculation is formal because the pullback formulae are only
valid when $d \phi \not= 0$ when $\phi = 0$, but as above they can be justified because the singular set has
codimension 2. The right side also has a pole at $s = 0$ and we get $\Delta \phi_+^0 = - |\nabla \phi|^2 \delta'(\phi)$,
which is equivalent to the divergence identity above.  There are further poles at $s = -1, -2, \dots$ but they 
now occur on both sides of the formulae. It is possible that they have further uses.  

The question arises of how such identities are related to the Bernstein-Kashiwara theorem that for any real
analytic function $f$ one may meromorphically extend $f_+^s$ to $\C$ by constructing a family $P_s(D)$
of differential operators with analytic coefficients and a meromorphic function $b(s)$ so
that $P_s(D) f^{s+1} = b(s) f^{s}. $ In the case $f = \phi_{\lambda}$,  the operator $|\nabla \phi|^{-2} (\Delta + s \lambda^2)$
accomplishes something like this, but it does not have analytic coefficients due to poles at the critical points of
$\phi$. One wonders what $P_s(D), b(s)$ might be when $f = \phi_{\lambda}$.

 \subsection{Other level sets}

These results generalize easily to any level set
$\ncal_{\phi_{\lambda}}^c : = \{\phi_{\lambda} = c\}$.  Let $\sgn
(x) = \frac{x}{|x|}$.

\begin{prop} \label{BOUNDSc} For any $C^{\infty}$ Riemannian
manifold, and any $f \in C(M)$ we have,

\begin{equation} \label{DONGTYPEc}  \int_M f (\Delta + \lambda^2)\; |\phi_{\lambda} - c| \;dV
+ \lambda^2 c \int f \mbox{\sgn} (\phi_{\lambda} - c) dV = 2\;
\int_{\ncal^c_{\phi_{\lambda}}}   f |\nabla \phi_{\lambda}| dS.
\end{equation}

\end{prop}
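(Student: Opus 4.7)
The plan is to follow exactly the strategy that proved the nodal identity \eqref{1}, but applied to the function $|\phi_\lambda - c|$ in place of $|\phi_\lambda|$. The only new ingredient is that $\phi_\lambda - c$ is no longer a $\lambda^2$-eigenfunction of $\Delta$; instead $(\Delta + \lambda^2)(\phi_\lambda - c) = -\lambda^2 c$, and this constant is what produces the extra $\mbox{sgn}(\phi_\lambda - c)$ term in the identity.

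First I would observe that away from $\ncal^c_{\phi_\lambda}$ one has $|\phi_\lambda - c| = \mbox{sgn}(\phi_\lambda - c)\,(\phi_\lambda - c)$, so in the classical (pointwise) sense
\begin{equation*}
(\Delta + \lambda^2)|\phi_\lambda - c| \;=\; \mbox{sgn}(\phi_\lambda - c)\,(\Delta + \lambda^2)(\phi_\lambda - c) \;=\; -\lambda^2 c\,\mbox{sgn}(\phi_\lambda - c).
\end{equation*}
Consequently the distribution $d\mu^c_\lambda := (\Delta + \lambda^2)|\phi_\lambda - c| + \lambda^2 c\,\mbox{sgn}(\phi_\lambda - c)$ vanishes on the open set $\{\phi_\lambda \neq c\}$, and is therefore supported on $\ncal^c_{\phi_\lambda}$. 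Since $|\phi_\lambda - c|$ has a convex corner across this level set, $d\mu^c_\lambda$ is a positive measure, and the only remaining task is to identify it with $2|\nabla \phi_\lambda|\,dS$ on $\ncal^c_{\phi_\lambda}$.

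To identify the measure, I would repeat the Gauss--Green calculation from the text almost verbatim. By Sard's theorem, choose $\delta > 0$ so that $\pm\delta$ are regular values of $\phi_\lambda - c$. For $f \in C^2(M)$,
\begin{equation*}
\int_{|\phi_\lambda - c|\le \delta} \!\!\bigl(f\,(\Delta+\lambda^2)|\phi_\lambda - c| - |\phi_\lambda - c|\,(\Delta+\lambda^2)f\bigr)\,dV = \int_{|\phi_\lambda - c|=\delta}\!\!\bigl(f\partial_\nu|\phi_\lambda - c| - |\phi_\lambda - c|\partial_\nu f\bigr)dS.
\end{equation*}
On $\{\phi_\lambda - c = \pm\delta\}$ the outer unit normal of the sublevel set is $\nu = \pm\nabla\phi_\lambda/|\nabla\phi_\lambda|$, so $\partial_\nu|\phi_\lambda - c| = |\nabla\phi_\lambda|$ on both sheets. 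Letting $\delta \to 0$ through regular values, the term containing $|\phi_\lambda - c|\,\partial_\nu f$ drops out (uniformly $O(\delta)$), and the two boundary sheets contribute equally, yielding
\begin{equation*}
\lim_{\delta\to 0}\int_{|\phi_\lambda - c|\le \delta} f\,(\Delta+\lambda^2)|\phi_\lambda - c|\,dV \;=\; 2\int_{\ncal^c_{\phi_\lambda}} f\,|\nabla\phi_\lambda|\,dS.
\end{equation*}
Combined with the bulk contribution $-\lambda^2 c\int f\,\mbox{sgn}(\phi_\lambda - c)\,dV$ coming from the complement, this gives exactly \eqref{DONGTYPEc}. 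The only analytic subtlety, as in \cite{SoZ}, is justifying the Gauss--Green formula and the passage to the limit across the singular set $\Sigma^c := \ncal^c_{\phi_\lambda}\cap\{\nabla\phi_\lambda = 0\}$; this is the main technical obstacle, but it is handled exactly as for $c=0$ by the fact that $\Sigma^c$ has Hausdorff codimension at least two in $M$ (the standard argument of Hardt--Simon applied to $\phi_\lambda - c$, which is still real-analytic when $(M,g)$ is real-analytic, and otherwise by Han--Hardt--Lin type arguments), so that $\Sigma^c$ is null for the hypersurface measure on the regular part and can be excised by a cut-off whose contribution tends to zero.
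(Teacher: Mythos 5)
Your strategy is exactly the one the paper uses for the $c=0$ identity \eqref{1}: differentiate $|\phi_\lambda - c|$ distributionally, observe that the absolutely continuous part is $-\lambda^2 c\,\mbox{sgn}(\phi_\lambda - c)$ because $(\Delta + \lambda^2)(\phi_\lambda - c) = -\lambda^2 c$, and identify the singular part via Gauss--Green over $\{|\phi_\lambda - c|\le\delta\}$ followed by $\delta\to 0^+$. This computation is correct and produces \eqref{DONGTYPEc}.

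The one point that needs repair is the last paragraph. You appeal to Hardt--Simon and Han--Hardt--Lin to conclude that $\Sigma^c = \ncal^c_{\phi_\lambda}\cap\{\nabla\phi_\lambda = 0\}$ has Hausdorff codimension at least two, ``exactly as for $c=0$''. For $c\ne 0$ neither part of this is valid. Those results bound the singular set of zero sets of solutions of a \emph{homogeneous} second-order elliptic equation, whereas $u = \phi_\lambda - c$ solves the inhomogeneous equation $\Delta u + \lambda^2 u = -\lambda^2 c$, so their hypotheses are not met; and the conclusion can genuinely fail. For instance, on $\T^2$ with $\phi_\lambda(x,y)=\cos x$ and $c=1$, one has $\ncal^1_{\phi_\lambda}=\{x=0\}$ and $\Sigma^1=\ncal^1_{\phi_\lambda}$, a circle of codimension one and hence of positive $\hcal^{n-1}$-measure, so your claim that ``$\Sigma^c$ is null for the hypersurface measure'' is false there. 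The proposition still holds in that example (both sides vanish, since $|\nabla\phi_\lambda|\equiv 0$ on $\ncal^1_{\phi_\lambda}$), but the correct justification is not a codimension count: $|\phi_\lambda - c|$ fails to be $C^2$ only at points of $\ncal^c_{\phi_\lambda}$ where $\phi_\lambda - c$ actually changes sign, $|\nabla\phi_\lambda|\,dS$ assigns no mass to $\Sigma^c$ regardless of its dimension, and the convergence of the boundary term as $\delta\to 0$ is controlled by the co-area formula applied to $\phi_\lambda - c$. So the mechanism of your proof is the right one and matches what the paper intends by ``these results generalize easily''; only the appeal to the codimension-two singular set estimate should be replaced by this more elementary observation.
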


This identity has similar implications for
$\hcal^{n-1}(\ncal^c_{\phi_{\lambda}})$ and for the
equidistribution of level sets.  Note that if $c
> \sup |\phi_{\lambda}(x)|$ then indeed both sides are zero.

\begin{cor}\label{cintro} For
$c \in {\mathbb R}$
$$\lambda^2\int_{\phi_\lambda\ge c}\phi_\lambda dV
= \int_{\ncal^c_{\phi_{\lambda}}}   |\nabla \phi_{\lambda}| dS
\leq \lambda^2 Vol(M)^{1/2}. $$ Consequently, if $c>0$
$$\hcal^{n-1}(\ncal^c_{\phi_{\lambda}})
+\hcal^{n-1}(\ncal^{-c}_{\phi_{\lambda}}) \geq \;C_g \; \lambda^{2
- \frac{n + 1}{2}} \int_{|\phi_{\lambda}| \geq c} |\phi_{\lambda}|
dV.
$$

\end{cor}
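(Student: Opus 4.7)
The plan is to derive Corollary \ref{cintro} directly from Proposition \ref{BOUNDSc} by choosing test function $f\equiv 1$ and then by invoking the mean-zero property $\int_M \phi_\lambda\, dV = 0$ (valid since $\phi_\lambda$ is an eigenfunction with $\lambda>0$, hence orthogonal to constants).

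First I would substitute $f\equiv 1$ in \eqref{DONGTYPEc}. Since $M$ is closed (or satisfies standard boundary conditions so that the boundary terms vanish), integration by parts kills $\int_M \Delta_g|\phi_\lambda - c|\, dV$. What remains on the left is
\begin{equation*}
\lambda^2\int_M\bigl(|\phi_\lambda-c| + c\,\sgn(\phi_\lambda-c)\bigr)\,dV.
\end{equation*}
A case split on the sign of $\phi_\lambda - c$ shows that the integrand equals $\phi_\lambda$ on $\{\phi_\lambda\ge c\}$ and $-\phi_\lambda$ on $\{\phi_\lambda<c\}$, so it rewrites as $2\phi_\lambda\chi_{\{\phi_\lambda\ge c\}} - \phi_\lambda$. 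Using $\int_M\phi_\lambda\, dV = 0$, this integrates to $2\int_{\phi_\lambda\ge c}\phi_\lambda\, dV$. Matching with the right side of \eqref{DONGTYPEc} and cancelling the factor of $2$ yields the stated identity $\int_{\ncal^c_{\phi_\lambda}}|\nabla\phi_\lambda|\,dS = \lambda^2\int_{\phi_\lambda\ge c}\phi_\lambda\, dV$. The upper bound $\le \lambda^2\mathrm{Vol}(M)^{1/2}$ then follows from Cauchy--Schwarz: $\int_{\phi_\lambda\ge c}\phi_\lambda\, dV \le \mathrm{Vol}(\{\phi_\lambda\ge c\})^{1/2}\|\phi_\lambda\|_{L^2} \le \mathrm{Vol}(M)^{1/2}$.

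For the second inequality (with $c>0$), I would apply the identity at both levels $c$ and $-c$ and add. Using $\int_M\phi_\lambda\, dV = 0$ once more,
\begin{equation*}
\int_{\phi_\lambda\ge -c}\phi_\lambda\, dV = -\int_{\phi_\lambda<-c}\phi_\lambda\, dV = \int_{\phi_\lambda\le -c}|\phi_\lambda|\, dV,
\end{equation*}
so the sum of the two identities collapses to
\begin{equation*}
\int_{\ncal^c_{\phi_\lambda}}|\nabla\phi_\lambda|\,dS + \int_{\ncal^{-c}_{\phi_\lambda}}|\nabla\phi_\lambda|\,dS = \lambda^2\int_{|\phi_\lambda|\ge c}|\phi_\lambda|\, dV.
\end{equation*}
To convert the gradient-weighted surface integrals into Hausdorff measure, I would use the pointwise bound $\|\nabla_g\phi_\lambda\|_{L^\infty}\le C_g\lambda^{(n+1)/2}$, which follows from Bernstein's $L^\infty$ gradient estimate combined with the standard Hörmander sup-norm bound $\|\phi_\lambda\|_{L^\infty} = O(\lambda^{(n-1)/2})$ from \eqref{03}. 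Factoring this out of the left side and dividing yields
\begin{equation*}
\hcal^{n-1}(\ncal^c_{\phi_\lambda}) + \hcal^{n-1}(\ncal^{-c}_{\phi_\lambda}) \ge C_g\,\lambda^{2-\frac{n+1}{2}}\int_{|\phi_\lambda|\ge c}|\phi_\lambda|\, dV,
\end{equation*}
which is the asserted bound.

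There is no real obstacle here beyond Proposition \ref{BOUNDSc}; the only subtlety is the bookkeeping between the two level sets $\pm c$, which works because of mean-zero-ness. The mildly delicate point is justifying the vanishing of $\int_M \Delta_g|\phi_\lambda - c|\, dV$ (and the integration by parts), which is done as in the derivation of \eqref{1}: approximate $|\cdot|$ by smooth functions, or use that the singular set of $\phi_\lambda - c$ has codimension $\ge 2$ so that $\Delta_g|\phi_\lambda-c|$ is a well-defined distribution supported on $\ncal^c_{\phi_\lambda}$ with vanishing total integral against constants.
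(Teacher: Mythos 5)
Your derivation matches the paper's: substitute $f\equiv 1$ in Proposition \ref{BOUNDSc}, integrate $\Delta$ by parts, and use the sign decomposition (the paper's identity \eqref{c}) together with $\int_M\phi_\lambda\,dV=0$ to arrive at $\lambda^2\int_{\phi_\lambda\ge c}\phi_\lambda\,dV=\int_{\ncal^c_{\phi_\lambda}}|\nabla\phi_\lambda|\,dS$. The remaining steps---Cauchy--Schwarz for the upper bound, addition of the identities at $\pm c$, and the sup-norm gradient estimate $\|\nabla\phi_\lambda\|_{L^\infty}=O(\lambda^{(n+1)/2})$---are the standard arguments the paper leaves implicit, and you supply them correctly.
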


The Corollary  follows by integrating $\Delta$ by parts,
 and by using the identity,
 \begin{equation} \label{c} \begin{array}{lll} \int_M |\phi_{\lambda} - c| + c \;\sgn(\phi_{\lambda} - c)
 \;dV & = & \int_{\phi_{\lambda} > c} \phi_{\lambda} dV -
 \int_{\phi_{\lambda} < c} \phi_{\lambda} dV \\ && \\ &=& 2\int_{\phi_{\lambda} > c} \phi_{\lambda} dV ,
 \end{array} \end{equation}
 since $0=\int_M \phi_\lambda dV=\int_{\phi_\lambda>c}\phi_\lambda dV
 +\int_{\phi_\lambda<c}\phi_\lambda dV$.

\subsection{\label{EXAMPLES} Examples}

The lower bound of Theorem \ref{theorem} is far from the lower bound conjectured by Yau, which
by Theorem \ref{DF} is correct at least in the real analytic case.  In this
section we go over the model examples to understand why the methds are not always getting sharp results. 

\subsection{\label{Flat tori section2} Flat tori}

We have,  $|\nabla \sin \langle k, x
\rangle|^2 = \cos^2 \langle k, x \rangle |k|^2$. Since $\cos
\langle k, x \rangle = 1$ when $\sin \langle k, x \rangle  = 0$
the integral is simply $|k|$ times the surface volume of the nodal
set, which is known to be of size $|k|$.  Also, we
have $\int_{{\bf T}} |\sin \langle k, x \rangle| dx \geq C$. Thus,
our method gives  the sharp lower bound
$\hcal^{n-1}(Z_{\phi_{\lambda}}) \geq C \lambda^{1}$ in this
example.

So the upper bound  is achieved in this example. Also, we
have $\int_{{\bf T}} |\sin \langle k, x \rangle| dx \geq C$. Thus,
our method gives  the sharp lower bound
$\hcal^{n-1}(Z_{\phi_{\lambda}}) \geq C \lambda^{1}$ in this
example.
Since $\cos
\langle k, x \rangle = 1$ when $\sin \langle k, x \rangle  = 0$
the integral is simply $|k|$ times the surface volume of the nodal
set, which is known to be of size $|k|$. 
\medskip

\subsection{Spherical harmonics on $S^2$}
\medskip

The  $L^1$ of $Y^N_0$  norm can be derived from
the asymptotics of Legendre polynomials
$$P_N(\cos \theta) = \sqrt{2} (\pi N \sin \theta)^{-\half} \cos
\left( (N + \half) \theta - \frac{\pi}{4} \right) + O(N^{-3/2}) $$
where the remainder is uniform on any interval $\epsilon < \theta
< \pi - \epsilon$. We have
$$||Y^N_0||_{L^1} = 4 \pi  \sqrt{\frac{(2 N +
1)}{2 \pi}} \int_0^{\pi/2} |P_N(\cos r)| dv(r) \sim C_0 > 0,$$
i.e. the $L^1$ norm is asymptotically a positive constant. Hence
$\int_{Z_{Y^N_0}}  |\nabla Y^N_0| ds \simeq C_0 N^2 $. In this
example $|\nabla Y_0^N|_{L^{\infty}} = N^{\frac{3}{2}}$ saturates
the sup norm bound. So the estimate of \eqref{3}
produces the lower bound $\hcal^{n-1}(Z_{\phi_{\lambda}}) \geq
\lambda^{\half}$. The accurate lower bound is $\lambda$, as one
sees from the rotational invariance and by the fact that $P_N$ has
$N$ zeros. The defect in the argument is that the bound  $|\nabla
Y_0^N|_{L^{\infty}} = N^{\frac{3}{2}}$ is only obtained on the
nodal components near the poles, where each component has  length
$\simeq \frac{1}{N}$.
\bigskip

\noindent{\bf Gaussian beams} \medskip

Gaussian beams are Gaussian shaped lumps which are concentrated on
$\lambda^{-\half}$ tubes $\tcal_{\lambda^{- \half}}(\gamma)$
around closed geodesics and have height $\lambda^{\frac{n-1}{4}}$.
We note that their $L^1$ norms decrease like
$\lambda^{-\frac{(n-1)}{4}}$, i.e. they saturate the $L^p$ bounds of \cite{Sog} for
small $p$.   In such cases we have $\int_{Z_{\phi_{\lambda}}}
|\nabla \phi_{\lambda}| dS \simeq \lambda^2
||\phi_{\lambda}||_{L^1} \simeq \lambda^{2 - \frac{n-1}{4}}. $ It
is likely that Gaussian beams are minimizers of the $L^1$ norm
among $L^2$-normalized eigenfunctions of Riemannian manifolds.
Also, the gradient bound $||\nabla \phi_{\lambda}||_{L^{\infty}} =
O(\lambda^{\frac{n + 1}{2}})$ is far off for Gaussian beams, the
correct upper bound being $\lambda^{1 + \frac{n-1}{4}}$.  If we
use these estimates on $||\phi_{\lambda}||_{L^1}$ and $||\nabla
\phi_{\lambda}||_{L^{\infty}}$,  our method gives
$\hcal^{n-1}(Z_{\phi_{\lambda}}) \geq C \lambda^{1 -
\frac{n-1}{2} }$,  while $\lambda$ is the correct lower bound for
Gaussian beams in the case of surfaces of revolution (or any real
analytic case). The defect is again that the gradient estimate is
achieved only very close to the closed geodesic of the Gaussian
beam. Outside of the tube  $\tcal_{\lambda^{- \half}}(\gamma)$ of
radius $\lambda^{- \half}$  around the geodesic, the Gaussian beam
and all of its derivatives decay like $e^{- \lambda d^2}$ where
$d$ is the distance to the geodesic. Hence
$\int_{Z_{\phi_{\lambda}}} |\nabla \phi_{\lambda}| dS \simeq
\int_{Z_{\phi_{\lambda}} \cap \tcal_{\lambda^{-
\half}}(\gamma)} |\nabla \phi_{\lambda}| dS. $ Applying  the
gradient bound for Gaussian beams  to the latter integral
 gives $\hcal^{n-1}(Z_{\phi_{\lambda}} \cap
\tcal_{\lambda^{- \half}}(\gamma)) \geq C \lambda^{1 -
\frac{n-1}{2}}$, which is sharp since the intersection
$Z_{\phi_{\lambda}} \cap \tcal_{\lambda^{- \half}}(\gamma)$
cuts across $\gamma$ in $\simeq \lambda$ equally spaced points (as
one sees from the Gaussian beam approximation).

\subsection{Non-scarring of nodal sets on $(M, g)$ with ergodic geodesic flow}

In this section, we  prove a rather simple (unpublished) result on nodal sets when the geodesic
flow of $(M, g)$ is ergodic. Since there exist many expositions of quantum ergodic eigenfunctions, we 
only briefly recall the main facts and definitions and refer to \cite{Z5,Z6} for further background. 

Quantum ergodicity concerns the semi-classical (large $\lambda$) asymptotics of eigenfunctions in
the case where the geodesic flow $G^t$ of $(M, g)$ is ergodic. We recall that the geodesic flow is
the Hamiltonian flow of the Hamiltonian $H(x, \xi) = |\xi|_g^2$ (the length squared) and that ergodicity
means that the only $G^t$-invariant subsets of the unit cosphere bundle $S^* M$ have either full Liouville
measure or zero Liouville measure (Liouville measure is the natural measure on the level set $H = 1$ induced
by the symplectic volume measure of $T^* M$).

We will say that a sequence $\{\phi_{j_k}\}$ of $L^2$-normalized
eigenfunctions  is {\it quantum ergodic} if
\begin{equation} \label{QEDEF} \langle A \phi_{j_k}, \phi_{j_k} \rangle \to
\frac{1}{\mu(S^*M)} \int_{S^*M} \sigma_A d\mu,\;\;\; \forall A \in
\Psi^0(M). \end{equation} Here, $\Psi^s(M)$ denotes the space of
pseudodifferential operators of order $s$, and $d \mu$ denotes
Liouville measure on the unit cosphere bundle $S^*M$ of $(M, g)$.
More generally, we denote by $d \mu_{r}$ the (surface) Liouville
measure on $\partial B^*_{r} M$, defined by
\begin{equation} \label{LIOUVILLE} d \mu_r = \frac{\omega^m}{d |\xi|_g} \;\; \mbox{on}\;\; \partial B^*_r
M.
\end{equation}
We also denote by $\alpha$ the canonical action $1$-form of
$T^*M$.

The main result is   that there exists a subsequence $\{\phi_{j_k}\}$ of
eigenfunctions whose indices $j_k$ have counting density one for
which $\rho_{j_k}(A): = \langle A \phi_{j_k}, \phi_{j_k}\rangle \to \omega(A)$ (where as above
$\omega(A) = \frac{1}{\mu(S^*M)} \int_{S^*M} \sigma_A d\mu $ is the normalized Liouville average of $\sigma_A$).  Such a
 sequence of  eigenfunctions is called a sequence of  `ergodic
eigenfunctions'. The key quantities to study are the quantum variances
\begin{equation} \label{diag} V_A(\lambda) : =
\frac{1}{N(\lambda)} \sum_{j:  \lambda_j \leq \lambda} |\langle A
\phi_j, \phi_j \rangle - \omega(A)|^2.
\end{equation}
The following result is the culmination of the results in \cite{Sh.1,Z1,CV,ZZw,GL}.

\begin{theo} \label{QE}    Let $(M,g)$ be a compact
Riemannian manifold (possibly with boundary), and let
$\{\lambda_j, \phi_j\}$ be the spectral data of its Laplacian
$\Delta.$ Then the geodesic flow
 $G^t$ is ergodic  on $(S^*M,d\mu)$ if and only if, for every
$A \in \Psi^o(M)$,  we have:
\medskip

\begin{enumerate}

 \item $\lim_{\lambda \rightarrow \infty} V_A(\lambda) =0.$
\medskip

 \item $(\forall \epsilon)(\exists \delta)
\limsup_{\lambda \rightarrow \infty} \frac{1} {N(\lambda)}
\sum_{{j \not= k: \lambda_j, \lambda_k \leq \lambda}\atop {
|\lambda_j - \lambda_k| < \delta}} |( A \phi_j, \phi_k )|^2 <
\epsilon $
\end{enumerate}

\end{theo}

Since all the terms in (1)
are positive, no cancellation is possible, hence  (1)  is
equivalent to the existence of a subset ${\mathcal S} \subset \N$
of density one such that ${\mathcal Q}_{{\mathcal S}} := \{ d
\Phi_k : k \in {\mathcal S}\}$ has only $\omega$ as a weak* limit
point.

We now consider nodal sets of quantum ergodic eigenfunctions. The following result says that
if we equip nodal sets with the measure $\frac{1}{\lambda_j^2}|\nabla \phi_{\lambda_j}| dS$,
then nodal sets cannot `scar', i.e. concentrate singularly as $\lambda_j \to \infty$.

\begin{mainprop} \label{L1REGa} Suppose that $\{\phi_{\lambda_j}\}$ is a quantum ergodic
sequence. Then any weak limit of $\{\frac{1}{\lambda_j^2}|\nabla \phi_{\lambda_j}| dS\}$ must
be absolutely continuous with respect to $dV$. \end{mainprop}

 We recall that,   for any $f \in C^2(M)$,
\begin{equation} \label{DONGTYPE}  \int_M \left((\Delta + \lambda^2) f \right)  |\phi_{\lambda}| dV =
\int_{Z_{\phi_{\lambda}}}   f |\nabla \phi_{\lambda}| dS.
\end{equation}

The identity for general $f \in C^2(M)$  can  be used to investigate the equidistribution of
nodal sets equipped with the surface measure $|\nabla
\phi_{\lambda}| dS$. We denote the normalized measure by
$\lambda^{-2} |\nabla \phi_{\lambda_j}| dS
|_{Z_{\phi_{\lambda}}}$.

\begin{mainlem} \label{W*}The weak * limits of the sequence  $\{\lambda^{-2} |\nabla
\phi_{\lambda_j}| dS |_{Z_{\phi_{\lambda}}}\}$ of bounded
positive measures are the same as the weak * limits of
$\{|\phi_{\lambda_j|}\}$ (against $f \in C(M)$.)
\end{mainlem}

 We let $f \in C^2(M)$ and multiply the identity (\ref{DONGTYPE}) by $\lambda^{-2}$. We then
integrate by parts to put $\Delta$ on $f$. This shows that for $f
\in C^2(M)$, we have $$\int_M f |\phi_{\lambda}| dV = \lambda^{-2}
\int _{Z_{\phi_{\lambda}}} f |\nabla_{\lambda}| dS +
O(\lambda^{-2}). $$ Letting $f = 1$, we see that the family of
measures $\{\lambda^{-2} |\nabla \phi_{\lambda_j}|^2 \delta
(\phi_{\lambda_j})\}$ is bounded. By uniform approximation of $f
\in C(M)$ by elements of $C^2(M)$, we see that the weak* limit
formula extends to $C(M)$.

\begin{mainlem} \label{L1REG} Suppose that $\{\phi_{\lambda_j}\}$ is a quantum ergodic
sequence. Then any weak limit of $\{|\phi_{\lambda_j}| dS\}$ must
be absolutely continuous with respect to $dV$. \end{mainlem}

We recall that  a sequence of measures $\mu_n $  converges weak
* to $\mu$ if  $\int_M f d\mu_n \to \int f d\mu $ for all
continuous $f$.  A basic fact about weak * convergence of measures
is that $\int f d\mu_n \to \int f d\mu$ for all $f \in C(M)$
implies that $\mu_n(E) \to \mu(E)$ for all sets $E$ with
$\mu(\partial E) = 0$ (Portmanteau theorem).

We also recall that a sequence of eigenfunctions is called quantum
ergodic (in the base) if
 \begin{equation} \label{QEa} \int f
|\phi_{\lambda_j}|^2 dV \to \frac{1}{Vol(M)} \int_M f dV.
\end{equation}
In other words, $\phi_{\lambda}^2 \to 1$ in the weak * topology,
i.e. the vague topology on measures.
We now prove Lemma \ref{L1REG}.

\begin{proof} Suppose that $|\phi_{\lambda_{j_k}}|dV \to d\mu$
and assume that  $d\mu = c dV + d\nu$ where $d\nu$ is singular
with respect to $dV$. Let $\Sigma =\mbox{supp}\;\; \nu$,  and let
$\sigma = \mu(\Sigma) = \nu(\Sigma)$. Let $\tcal_{\epsilon}$ be
the $\epsilon$-tube around $\Sigma$. Then $$\lim_{k \to \infty}
\int_{\tcal_{\epsilon}} |\phi_{\lambda_{j_k}}|dV = c
Vol(T_{\epsilon}) + \nu(\Sigma) = \sigma + O(\epsilon). $$ But for
any set $\Omega \subset M$, $\int_{\Omega} |\phi_{\lambda_j}| dV
\leq \sqrt{Vol(\Omega)} \sqrt{\int_{\Omega} |\phi_{\lambda_j}|^2 d
V}. $ Hence if $Vol(\partial \Omega) = 0$,  $\limsup_{j \to
\infty} \int_{\Omega} |\phi_{\lambda_j}| dV \leq Vol(\Omega)$.
Letting $\Omega = \tcal_{\epsilon}(\Sigma)$ we get $\sigma +
O(\epsilon) \leq Vol(T_{\epsilon}(\Sigma)) = O(\epsilon)$ since
$\lim_{k \to \infty} \int_{\tcal_{\epsilon}}
|\phi_{\lambda_{j_k}}|^2 dV = Vol(\tcal_{\epsilon}) =
O(\epsilon)$. Letting $\epsilon \to 0$ gives a contradiction.

\end{proof}

Of course, it is possible that the only weak* limit is zero.

\subsection{\label{LINFTYQE} Weak* limits for $L^{\infty}$ quantum ergodic
sequences}

To our knowledge, the question whether the limit (\ref{QE}) holds
 $f \in L^{\infty}$ when $(M,g)$ has ergodic geodesic flow  has not been studied. It is
equivalent to strengthening the Portmanteau statement to all
measurable sets $E$, and is equivalent to the statement that
$\{\phi_{\lambda_j}^2\} \to 1$ weakly in $L^1$.  We call such
sequences $L^{\infty}$ quantum ergodic on the base.  The term `on
the base' refers to the fact that we only demand   quantum
ergodicity for the projections of the `microlocal lifts' to the
base $M$. For instance, the exponential eigenfunctions of flat
tori are $L^{\infty}$  quantum ergodic in this sense.

\begin{lem} \label{EG} Suppose that $\{\phi_j\}$ is an $L^{\infty}$- quantum ergodic
sequence. Then there exists $\epsilon > 0$ so that
$||\phi_j||_{L^1} \geq \epsilon > 0$ for all $j$. \end{lem}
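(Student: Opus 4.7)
My plan is to exploit the fact that $L^{\infty}$ quantum ergodicity on the base is equivalent to $\phi_j^2 \to \frac{1}{\vol(M)}$ in the weak topology of $L^1(M)$ (testing against $L^{\infty}$ functions), and combine this with a truncation argument. The key obstacle is that the obvious inequality $\int \phi_j^2\,dV \leq \|\phi_j\|_{L^1}\|\phi_j\|_{L^{\infty}}$ gives $\|\phi_j\|_{L^1} \geq 1/\|\phi_j\|_{L^{\infty}}$, which is useless since $\|\phi_j\|_{L^{\infty}}$ can grow like $\lambda_j^{(n-1)/2}$. To circumvent this I need to truncate at a level $K$ that does not depend on $j$, and ensure that most of the $L^2$-mass sits below that level.

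First I would record that weak convergence in $L^1$ implies weak relative compactness, so by the Dunford--Pettis theorem the family $\{\phi_j^2\}$ is uniformly integrable. Concretely, this yields
\begin{equation*}
\lim_{K \to \infty} \sup_{j} \int_{\{\phi_j^2 > K\}} \phi_j^2\, dV = 0.
\end{equation*}
This is exactly the statement that ``high values do not carry a definite fraction of the $L^2$-mass, uniformly in $j$'', and it is the crucial consequence of the $L^{\infty}$ quantum ergodicity assumption. I expect this step to be the main non-elementary input; everything afterwards is a direct calculation.

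Next, fix a constant $K > 0$ (depending only on the sequence) such that
\begin{equation*}
\sup_{j} \int_{\{\phi_j^2 > K\}} \phi_j^2\, dV < \tfrac{1}{2}.
\end{equation*}
Since $\int_M \phi_j^2\, dV = 1$, this forces $\int_{\{\phi_j^2 \leq K\}} \phi_j^2\, dV > \tfrac{1}{2}$ for every $j$. On the set $\{\phi_j^2 \leq K\}$ we can trade a power of $\phi_j$ for the bound $\sqrt{K}$, which gives
\begin{equation*}
\tfrac{1}{2} \;<\; \int_{\{\phi_j^2 \leq K\}} \phi_j^2\, dV \;\leq\; \sqrt{K}\,\int_{\{\phi_j^2 \leq K\}} |\phi_j|\, dV \;\leq\; \sqrt{K}\,\|\phi_j\|_{L^1}.
\end{equation*}
Rearranging produces the uniform lower bound $\|\phi_j\|_{L^1} \geq \tfrac{1}{2\sqrt{K}}$, so one may take $\epsilon = \tfrac{1}{2\sqrt{K}}$, completing the proof.

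A secondary check I would include is that this argument really uses the $L^{\infty}$ (not merely $C(M)$) testing: uniform integrability is equivalent to weak $L^1$ convergence, which is testing against $L^{\infty}$, whereas mere weak-$*$ convergence of $\phi_j^2\,dV$ as measures would not suffice. This explains why the standard quantum ergodicity conclusion against $C(M)$ test functions, on its own, does not yield the lower bound, and clarifies the role of the hypothesis.
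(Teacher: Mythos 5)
Your proof is correct, but it takes a genuinely different route from the paper's. The paper argues by contradiction: assuming $\|\phi_{j_k}\|_{L^1}\to 0$ along a subsequence, it extracts a further subsequence with $\phi_{j_k}\to 0$ a.e., and then invokes Szlenk's weak Banach--Saks property for $L^1$ to pass to a subsequence whose Cesàro means $\frac{1}{N}\sum_{k\le N}\phi_{j_k}^2$ converge \emph{strongly} in $L^1$ to $1$; since these Cesàro means also converge to $0$ a.e.\ (being averages of a sequence tending to zero pointwise), one gets a contradiction. You instead argue directly: weak convergence of $\phi_j^2$ in $L^1$ gives weak relative compactness, hence uniform integrability by Dunford--Pettis, and a truncation at a fixed level $K$ then converts $L^2$-mass below the truncation into an $L^1$ lower bound. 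Both routes rest on nontrivial classical facts about weak topology in $L^1$ (weak Banach--Saks vs.\ Dunford--Pettis), and both correctly exploit that the hypothesis is genuinely $L^\infty$-testing rather than $C(M)$-testing. Your version has the advantage of being direct rather than by contradiction, and of producing an explicit constant $\epsilon = \tfrac{1}{2\sqrt{K}}$ tied to the uniform integrability threshold; the paper's has the advantage of using a slightly more elementary pointwise/Cesàro mechanism once the Banach--Saks extraction is granted.
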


\begin{proof}  We argue by contradiction.  If the conclusion were false,  there would exist a subsequence   $\phi_{j_k} \to 0$ strongly  in
$L^1$, but with  $\phi_{j_k}^2 dV \to dV$ weakly in $L^1$.
The first  assumption implies the existence of a  subsequence (which we continue to denote by $\phi_{j_k}$)
satisfying $\phi_{j_k} \to 0$ a.e. $dV$. But $L^1$ has the weak Banach-Saks property:  any weakly convergent sequence
in $L^1$ has a subsequence whose 
arithmetic means converge
strongly (Szlenk's weak Banach-Saks theorem for $L^1$).  We choose such a subsequence for 
$\phi_{j_k}$ and continue to denote it as $\phi_{j_k}$.  This subsequence 
has the properties that 

\begin{enumerate}

\item  $\phi_{j_k} \to 0$ a.e.


 \item  $\psi_N := \frac{1}{N} \sum_{k\leq N} \phi_{j_k}^2 \to 1$
strongly in $L^1$. 

\end{enumerate} 

But 
 $\psi_N(x) \to 0$ on the same set  where $\phi_{j_k}(x) \to 0$, hence by (1) $\psi_N \to 0$ a.s.
This contradicts (2) and completes the proof.

\end{proof}

Combining with the above, we have 

\begin{cor}   Suppose that $\{\phi_{\lambda_j}\}$ is an $L^{\infty}$
quantum ergodic sequence on the base.  Then the conjectured Yau lower bound holds:
$  \hcal^{n-1} (Z_{\phi_{\lambda}}) \geq C _g \lambda$ for some $C_g > 0$. \end{cor}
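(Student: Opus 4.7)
The plan is to combine three ingredients already established above: the Schwarz-refined identity \eqref{2}, the gradient integral bound \eqref{4}, and Lemma~\ref{EG}. Starting from \eqref{2},
\begin{equation*}
\lambda^2 ||\phi_\lambda||_{L^1} \le 2\, (\hcal^{n-1}(Z_{\phi_\lambda}))^{1/2} \Bigl(\int_{Z_{\phi_\lambda}} |\nabla_g \phi_\lambda|^2 \, dS\Bigr)^{1/2},
\end{equation*}
and inserting \eqref{4}, which gives $\int_{Z_{\phi_\lambda}} |\nabla_g \phi_\lambda|^2 \, dS \lesssim \lambda^3$, squaring and rearranging yields
\begin{equation*}
\hcal^{n-1}(Z_{\phi_\lambda}) \ge c\, \lambda \, ||\phi_\lambda||_{L^1}^2
\end{equation*}
for a constant $c = c(M,g) > 0$. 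In this form the Yau-type lower bound is equivalent to a uniform positive lower bound on $||\phi_\lambda||_{L^1}$.

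Next I would invoke Lemma~\ref{EG} verbatim: for an $L^\infty$-quantum-ergodic sequence $\{\phi_{\lambda_j}\}$ there exists $\epsilon > 0$ with $||\phi_{\lambda_j}||_{L^1} \ge \epsilon$ for every $j$. Inserting this bound into the previous display gives $\hcal^{n-1}(Z_{\phi_{\lambda_j}}) \ge C_g \lambda_j$ with $C_g := c\epsilon^2$, which is the desired conclusion.

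There is no serious obstacle inside the corollary itself: once \eqref{2}, \eqref{4}, and Lemma~\ref{EG} are available, the argument is a one-line substitution. The real difficulty lives upstream in Lemma~\ref{EG}, whose proof uses Szlenk's weak Banach--Saks theorem to upgrade the weak $L^1$ convergence $\phi_j^2\, dV \to dV$ to almost-everywhere convergence of Ces\`aro averages, and then extracts a contradiction from a hypothetical subsequence with $\phi_j \to 0$ in $L^1$. The reason the $L^\infty$ version of quantum ergodicity (rather than the usual $C(M)$ version) is essential is exactly that one must be able to test $\phi_j^2$ against bounded measurable indicator-type functions on small tubes, not merely against continuous ones. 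Consequently the corollary itself is immediate, but its scope depends on identifying ergodic $(M,g)$ for which $L^\infty$-quantum-ergodicity on the base can actually be verified, which is the genuinely open part of the circle of ideas.
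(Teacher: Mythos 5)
Your argument is correct and is exactly what the paper intends by ``Combining with the above'': the display you derive, $\hcal^{n-1}(Z_{\phi_\lambda}) \gtrsim \lambda\,\|\phi_\lambda\|_{L^1}^2$, is precisely inequality~\eqref{main} of Theorem~\ref{theorem}, and composing it with Lemma~\ref{EG} yields the stated Yau lower bound. You could simply cite~\eqref{main} directly rather than re-deriving it from~\eqref{2} and~\eqref{4}, but the route is the same.
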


 We also see that the limits in Proposition \ref{L1REGa} are non-zero:

\begin{cor} Suppose that $\{\phi_{\lambda_j}\}$ is an $L^{\infty}$
quantum ergodic sequence on the base. Then there exists $C >0$ so that any weak limit of the sequence
$\frac{1}{\lambda^2} |\nabla \phi_{\lambda_j}| dS |_{Z_{\phi_{\lambda_j}}}$ has mass $ \geq
C >0$.\end{cor}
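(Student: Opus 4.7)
The plan is to combine the two lemmas immediately preceding the corollary: Lemma \ref{W*} identifies the weak* limits of $\frac{1}{\lambda_j^2}|\nabla \phi_{\lambda_j}| dS|_{Z_{\phi_{\lambda_j}}}$ with those of $|\phi_{\lambda_j}| dV_g$, while Lemma \ref{EG} provides a uniform lower bound $\|\phi_{\lambda_j}\|_{L^1} \geq \epsilon > 0$ under the $L^\infty$-quantum-ergodicity hypothesis. Since $M$ is compact, weak* convergence of Radon measures can be tested against the constant function $1 \in C(M)$, and the total mass is preserved in the limit. So there is essentially nothing to do beyond chaining these facts together.

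In more detail, I would start from a weak* convergent subsequence $\frac{1}{\lambda_{j_k}^2}|\nabla \phi_{\lambda_{j_k}}| dS|_{Z_{\phi_{\lambda_{j_k}}}} \rightharpoonup \mu$; the boundedness needed to extract such a subsequence comes from Lemma \ref{W*}, which gives the family is bounded in total variation (and this uses only $\|\phi_{\lambda_j}\|_{L^1} \leq 1$, which is automatic from the $L^2$-normalization and Cauchy--Schwarz on compact $M$). By Lemma \ref{W*}, up to passing to a further subsequence, the measures $|\phi_{\lambda_{j_k}}| dV_g$ converge weak* to the same limit $\mu$. Testing against $f \equiv 1 \in C(M)$ gives
\begin{equation*}
\mu(M) \;=\; \lim_{k \to \infty} \int_M |\phi_{\lambda_{j_k}}|\, dV_g \;=\; \lim_{k \to \infty} \|\phi_{\lambda_{j_k}}\|_{L^1}.
\end{equation*}
By Lemma \ref{EG}, this limit is bounded below by some $\epsilon > 0$ independent of $k$. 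Setting $C = \epsilon$ completes the proof.

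There is no real obstacle here, and no Markov-type reasoning or additional microlocal input is needed; the work has already been done in Lemmas \ref{W*} and \ref{EG}. The only small point to handle carefully is to ensure that one and the same subsequence realizes both identifications with $\mu$, which is arranged by a standard diagonal extraction. One could also remark as a sanity check that this lower bound on the mass is consistent with (and in fact sharper than) what comes out of the $L^1$ lower bound $\|\phi_{\lambda_j}\|_{L^1} \gtrsim \lambda^{-(n-1)/4}$ of \eqref{3}, since the $L^\infty$-quantum-ergodic hypothesis upgrades the polynomial lower bound to a positive constant.
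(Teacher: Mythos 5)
Your proof is correct and uses exactly the two ingredients the paper assembles for this corollary: Lemma \ref{W*} to identify the weak* limits of $\lambda^{-2}|\nabla\phi_{\lambda_j}|\,dS$ with those of $|\phi_{\lambda_j}|\,dV_g$, and Lemma \ref{EG} to give the uniform $L^1$ lower bound that is then passed to the limit by testing against the constant function. This is the same argument the paper intends by ``Combining with the above''; the extra remark about diagonal extraction is harmless (and in fact unnecessary, since the identity \eqref{DONGTYPE} shows the two sequences converge simultaneously along any given subsequence).
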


Of course, such an abstract functional analysis argument only serves a purpose if we can prove
that  eigenfunctions of $\Delta$ 
are $L^{\infty}$ quantum ergodic on
the base in interesting cases. It is natural to conjecture that this condition holds on negatively curved
manifolds, since 
 the expected $L^1$ norm of a random wave is bounded
below by a positive constant.  The main problem is that $L^{\infty}(M)$ is a non-separable Banach space.
The standard quantum ergodicity arguments show that   (when quantum ergodicity is valid),  for any Borel
set $E$ there exists a subsequence $\scal_E$  of density one so that \begin{equation} \label{QEE} \lim_{k \to \infty, j_k \in \scal_E}
\int_E \phi_{j_k}^2 d V =  Vol(E).  \end{equation}
However, the non-separability of $L^{\infty}(M)$ means that one cannot use the diagonalization argument of \cite{Z1,CV} to show that there exists a density
one subsequence independent of $E$ so that \eqref{QEE} holds.
If $L^{\infty}$ quantum ergodicity fails, then zero-density subsequences of eigenfunctions would `scar' along Cantor
sets C of positive measure. That is, the mass $\int_C \phi_{j_k}^2 d V$ may tend to a larger value  than $Vol(C)$. 

\bigskip

\noindent{\bf Equidistributed sums of Gaussian beams and quantum
ergodicity}
\medskip

We briefly consider the question whether it is possible to have a
quantum ergodic sequence of eigenfunctions for which
$||\phi_j||_{L^1} \to 0. $ 

First, we observe that 
there do exist sequences of quantum ergodic functions (not eigenfunctions) with this property: 
 $\sum_{j = 1}^{M(n)} \sqrt{\frac{n}{M(n)}} \chi_{[x_j(n), x_j(n) + \frac{1}{n}]} \to 0$ in $L^1([0, 1],
dx)$ as long as $M(n) = o(n). $  But its square is the probability
measure $\frac{1}{M(n)} \sum_{j = 1}^{M(n)} n \chi_{[x_j(n),
x_j(n) + \frac{1}{n}]} $ and if the $\{x_j(n)\}$ are uniformly
distribution in $[0, 1]$ (w.r.t. $dx$), this tends weakly to $dx$.

It is tempting to construct  sequences of eigenfunctions with the same
property: a Gaussian beam $Y^N_{\gamma}$ on the standard $S^2$
associated to a closed geodesic $\gamma$ (i.e. a rotate of
$Y^N_N$) is of height $\lambda^{\frac{1}{2}}$ in a tube of radius
$\sqrt{\lambda}$ around $\gamma$. If we let $M(N) = o(N^{\half})$
and choose $M(N)$ closed geodesics which are
$\frac{1}{\sqrt{M(N)}}$--separated, and become equidistributed in
the space of closed geodesics, then $\phi_N =
\frac{1}{\sqrt{M(N)}}\sum_{j = 1}^{M(N)} Y^N_{\gamma_j}$ is an
eigenfunction whose $L^1$-norm tends to zero like $\sqrt{M(N)}
N^{- \frac{1}{4}}$ but whose $L^2$ norm is asymptotic to $1$ and
whose modulus square tends weak* to $1$. More precisely,
$\frac{1}{M} \sum_{j = 1}^{M(N)} |Y^N_{\gamma_j}|^2 \to 1$ weakly.
To prove that  $|\phi_N|^2 \to 1$ requires proving that
$\frac{1}{M(N)} \sum_{j \not= k} Y^N_{\gamma_j}
\overline{Y^N_{\gamma_k}} \to 0$. The sum is over $\sim M(N)^2$
terms which  are exponentially outside the tube intersections
$T_{\lambda^{-\half}}( \gamma_j)\cap
T_{\lambda^{-\half}}(\gamma_k)$. In the sum we may fix $j = j_0$
and multiply by $M(N)$. So we need then to show that $\sum_{k
\not= j_0} |\langle Y^N_{\gamma_{j_0}}, Y^N_{\gamma_k} \rangle|
\to 0$.    The geodesics are well-separated if the distance in the
space of geodesics between them is $\geq \frac{1}{\sqrt{M(N)}}$,
which means that the angle between $\gamma_j$ and $\gamma_k$ is at
least this amount. When the angle is $\geq \epsilon$ then the
inner product $|\langle Y^N_{\gamma_j}, Y^N_{\gamma_k} \rangle |
\leq \frac{1}{\epsilon} N^{-1}$ since the  area of
$T_{\lambda^{-\half}}( \gamma_j)\cap
T_{\lambda^{-\half}}(\gamma_k)$ is  bounded by this amount. For
any $\epsilon$ the sum over geodesics separated by $\epsilon$ is
$O(\frac{1}{\epsilon} M(N) N^{-1})$. The remaining number of terms
is $O(\epsilon^2 M(N))$. So if $\epsilon = o(\sqrt{M(N)})$ both
terms tend to zero.

\subsection{Intersections of nodal sets of orthogonal eigenfunctions}

A related question is whether nodal sets of orthogonal eigenfunctions of the same eigenvalue must intersect.
Of course, this question only arises when the eigenvalue has multiplicity $ > 1$.   A result of
this kind was obtained by V. Gichev under a topological condition on $M$.

\begin{theo}  \label{Gi} \cite{Gi}  Suppose that $H^1(M) = 0$ and that $\phi_{\lambda, 1}, \phi_{\lambda, 2}$ are
orthogonal eigenfunctions with the same eigenvalue $\lambda^2$. Then $Z_{\phi_{\lambda,1}}
\cap Z_{\phi_{\lambda,2}} \not= \emptyset$. \end{theo} 

We briefly sketch the proof: Let  $\acal_1$ resp. $\acal_2$ be the family of nodal domains of $\phi_{\lambda, 1}$
resp. $\phi_{\lambda, 2}$.  Each union $\bigcup_{W \in \acal_j} W$ covers $M$ up to the nodal set of $\phi_{\lambda, j}$.
If the nodal sets do not intersect then the nodal set of $\phi_{\lambda,2}$ is contained in  $\bigcup_{W \in \acal_1} W$,
for instance; similarly if the indices are reversed. Hence the nodal sets have empty intersection if and only if
$\bigcup_{W \in \acal_1} W \cup \bigcup_{W \in \acal_2} W$ covers $M$. Under this condition, Gichev constructs
a closed 1-form which is not exact by showing that the  incidence graph of the cover obtained from the union of the nodal
domains of $\phi_{\lambda, 1}$ and $\phi_{\lambda, 2}$ contains a cycle. He then considers a nodal domain $U$
of $\phi_{\lambda, 1}$ and a nodal domain $V$ of $\phi_{\lambda,2}$ which intersect. Let $Q = \partial U \cap V$.
Since $Q \cap \partial V \not= \emptyset$ there exists a smooth function $f$ on $M$ such that $f \equiv 1$
in a neighborhood of $Q$ and $f = 0$ near $\partial U \backslash Q$. Let $\eta $ be the one form which equals
$df$ on $U$ and $0$ on the complement of $U$. Clearly $\eta$ is closed and it is verified in \cite{Gi} that $\eta$ is 
not exact. 

Givech also proves that for $S^2$, if $0$ is a regular value of $\phi_{\lambda,1}$ then
$\# Z_{\phi_{\lambda,1}}
\cap Z_{\phi_{\lambda,2}} \geq 2$   for every orthogonal eigenfunction
$\phi_{\lambda, 2}$ with the same eigenvalue. The proof is simply to use Green's formula
for a nodal domain for $\phi_{\lambda, 1}$ and note that the integral of $\phi_{\lambda,2} \frac{\partial}{\partial \nu} \phi_{\lambda,1}$ equals zero on its boundary.

A related observation is the curious identity of \cite{SoZ}, which holds for any $(M, g)$:  for any pair of eigenfunctions,
$$(\lambda_j^2 - \lambda_k^2) \int_M \phi_{\lambda_k} |\phi_{\lambda_j}| dV
= 2\int_{Z_{\phi_{\lambda_j}}} \phi_{\lambda_k} |\nabla
\phi_{\lambda_j}| dS. $$
Hence for a pair of orthogonal eigenfunctions of the same eigenvalue, 
$$\int_{Z_{\phi_{\lambda_j}}} \phi_{\lambda_k} |\nabla
\phi_{\lambda_j}| dS = 0. $$

\section{Norms and nodal sets}

Studies of nodal sets often involve dual studies of $L^p$ norms of eigenfunctions. In this section, we review
a number of relatively recent results on $L^p$ norms, both in the global manifold $M$ and for restrictions of
eigenfunctions to submanifolds. 

\subsection{Polterovich-Sodin on norms and nodal sets}

Let $\acal(\phi_{\lambda})$ denote the collection of nodal domains of $\phi_{\lambda}$. For $A \in \acal(\phi_{\lambda})$
let $m_A = \max_A |\phi_{\lambda}|. $  In \cite{PS} the following is proved (see Corollary 1.7):

\begin{theo} \cite{PS} Let $(M, g)$ be a $C^{\infty}$ Riemannian surface.  For every  $\phi_{\lambda}$ with $\|\phi_{\lambda}\|=1$,
   $$ \sum_{A\in {\mathcal A}}m^6_A\leq k_g\lambda^{3}.$$  Hence,  for each $a>0$, the number of nodal domains
    $A$ of $\phi_{\lambda}$ where the maximal bound  $m_A\geq a\lambda^{1/2}$ is achieved in order of 
magnitude  does not exceed $k_g a^{-6}$. In particular, for fixed $a$, it remains bounded as $\lambda\rightarrow\infty$.
\end{theo}

The proof uses a certain  Bananch indicatrix,  the Sogge $L^6 $ bounds, and estimates on the inradius of
nodal domains. 
For a continuous function $u\in C(\Bbb{R})$,  the generalized Banach
   indicatrix is defined by $$ B(u,f)=\int_{-\infty}^{+\infty}u(c)\beta(c,f)dc, $$
where for a  regular value $c\in \Bbb{R}$ of $f$,  $\beta(c,f)$ is
  the number of connected components of $f^{-1}(c)$. 
In \cite{PS}, the  integral $B(u,f)$) is bounded from above
through the $L^2$-norms of the function $f$ and $\Delta
   f$. I.e.. in  Theorem 1.3. For any $f\in  {\mathcal F}_\lambda$ and any continuous function $u$ on $\Bbb{R}$,
   $$ B(u,f)\leq k_g\|u\circ f\|(\|f\|+\|\Delta f\|). $$

The proof is roughly as follows: Let $p_i$ be a point of $A_i$ where th maximum is achieved. 
By the  inradius bound \cite{Man3}, there exists $\mu > 0$ so that the disc $D(p_j,  \frac{\mu}{\lambda}) \subset A_i$. 
One can then express $\phi_{\lambda}$ in  $D(p_j,  \frac{\mu}{\lambda}) $ by the sum of  a Green's integral and Poisson
integral with respect to the Euclidean Dirichlet Green's function of a slightly smaller disc. In particular one
may express $\phi_{\lambda}(p_j)$ by such an integral. Apply H\"older's inequality one gets 
$$m_j^6 \leq k_g \lambda^2 \int_{D(p_j, r)} \phi_{\lambda}^6 d V, \;\;\; (r = \mu \lambda^{-\half}). $$
Since the discs are disjoint one can sum in $j$ and apply the Sogge $L^6$ bound to include the proof. 
Thus, the only fact one used about nodal domains was lower bound on the inradius.

 This result bears a curious comparison to the results of \cite{STZ} giving new constraints on $(M, g)$
which are of maximal eigenfunction growth, i.e. possess eigenfunctions such that $m_A \geq C \lambda^{\half}$
for some  sequence of eigenfunctions $\phi_{\lambda_j}$ with $\lambda_j \to \infty$. The result (building on
older results of Sogge and the author)  states that such a sequence can exist only if $(M, g)$ possesses a `pole' 
$p$ for which the set of geodesic loops $\lcal_p$ based at $p$ has positive measure in $S^*_p M$ (with respect to the natural
spherical volume measure) and such that the first return map has a recurrence property. In fact, the only known
surfaces  where the bounds are achieved are surfaces of revolution, and in this case  the first return map is
the identity. It is quite plausible that if $(M, g)$ has maximal eigenfunction growth, then the first return map 
must be the identity map on a set of positive measure in $\lcal_p$.

   Combined with the
Polterovich-Sodin result above, we see that such `poles' $p$, when they exist, can only occur in a uniformly bounded number
of nodal domains of a surface.  It would be interesting to know if there can exist only a finite number of such points at all
if one additionally assumes that the set of smoothly closed geodesics has measure zero.  For instance,, in that
case,  there
might be a unique pole in each of the finite number of possible nodal domains. This finitude problem
would be useful in strengthening the condition on $(M, g)$ of maximal eigenfunction growth.

\subsection{\label{NORMS} Norms of restrictions}

A problem of current interest is to consider $L^p$ norms of restrictions of eigenfunctions to hypersurfaces
or higher codimension submanifolds.  For expository purposes we only consider geodesics on surfaces here. 
Following earlier work of A. Reznikov, Burq, G\'erard and Tzvetkov \cite{BGT}  proved

\begin{theo}  \label{bgt} \cite{BGT} Suppose that $(M, g)$ is a compact surface, then there exists $\lambda_0(\epsilon),  C  > 0$ so that, for  any geodesic segment $\gamma $
of length $L_{\gamma}$ 
 and any eigenfunction $\phi_{\lambda}$ with $\lambda \geq \lambda_0$ we have
\begin{equation} \label{littleoha}  \frac{1}{L_{\gamma} }\int_{\gamma} |\phi_{\lambda}|^2 ds \leq C
  \lambda^{\frac{1}{2}}  ||\phi_{\lambda}||^2 \end{equation}
\end{theo}
Their estimate is sharp for the round
sphere $S^2$ because of the highest weight spherical harmonics  They 
also showed that for all geodesic segments $\gamma$ of unit length,
$$\left(\,   \frac{1}{L_{\gamma} } \int_\gamma |\phi_\la|^4\, ds \, \right)^{1/4}\le C\la^{\frac14}\|e_\la\|_{L^2(M)},
$$

The estimate is only known to be achieved when the geodesic is elliptic, and quite likely it can be improved
if the geodesic is hyperbolic. A result in this direction is:

\begin{theo}  \label{STR} \cite{SoZ2} Suppose that $(M, g)$ is a compact surface of non-positive curvature. 
Then for all $\epsilon$,  there exists $\lambda_0(\epsilon),  C  > 0$ so that, for  any geodesic segment $\gamma $
of length $L_{\gamma}$ 
 and any eigenfunction $\phi_{\lambda}$ with $\lambda \geq \lambda_0(\epsilon),$ we have
\begin{equation} \label{littleoh}  \frac{1}{L_{\gamma} }\int_{\gamma} |\phi_{\lambda}|^2 ds \leq C
\epsilon  \lambda^{\frac{1}{2}}  ||\phi_{\lambda}||^2 \end{equation}
\end{theo}

A related result on $L^4$ norms is,

\begin{theo}\label{theorem1.1} \cite{SoZ3}
Let $(M,g)$ be a surface and assume that the set
\begin{equation}\label{2.0}
{\mathcal P}=
\{(x,\xi)\in S^*M: g^t(x,\xi)
=(x,\xi), \, \, \text{some }\; t>0\}
\end{equation}  of periodic points has Liouville measure zero in $S^*M$. 
Then
there is a subsequence of eigenvalues $\lambda_{j_k}$ of density one 
so that
\begin{equation}\label{2.2}
\|e_{\lambda_{j_k}}\|_{L^4(M)}=o(\lambda^{1/8}_{j_k}).
\end{equation}

\end{theo}

The  results are based in part on a relatively new Kakeya-Nikodym maximal function estimate of Bourgain
\cite{Bour}, as improved by Sogge \cite{Sog2}. 
We believe that it can be improved the following
phase space Kakeya-Nikodym theorem. Let $T_{\delta} (\gamma)$ be the tube of radius $\delta$
around a geodesic arc in $M$, and let $ \chi_{\delta, \gamma}$ be a smooth cutoff to a  phase space tube of its
lift to $S^*M$. Then  for all $\epsilon$, there
exists $\delta(\epsilon)$ such that
$$ \limsup_{\lambda \to \infty}
\frac1{N(\lambda)}\sum_{\lambda_j\le \lambda} \sup_{\gamma\in
\Pi}\int_{T_{\delta(\epsilon), (\gamma)}} | \phi_\lambda|^2\, ds <
\epsilon.$$
We expect the sup
occurs when $\gamma$ is the orbit of $(x, \xi)$. But then it is easy to estimate the right side and one
should be able to get a quantitative improvement of Theorem \ref{theorem1.1}.

\subsection{\label{QERsection}Quantum ergodic restriction (QER) theorems}

In this section we briefly review a recent series of results \cite{TZ2,TZ3,DZ,CTZ} on quantum ergodic restriction
theorems. They are used in section \S \ref{NODALGEOS} to determine the limit distribution of intersections of
nodal lines and geodesics on real analytic surfaces (in the complex domain).

Let $H \subset M$ be a hypersurface and consider the Cauchy data $(\phi_j |_H, \lambda_j^{-1} \partial_{\nu} \phi_j|_H)$
of eigenfunctions along $H$; here $\partial_{\nu}$ is the normal derivative.  We refer to $\phi_j |_H$ as the 
Dirichlet data and to $\lambda_j^{-1} \partial_{\nu} \phi_j|_H$ as the Neumann data. A QER (quantum
ergodic restriction)  theorem
seeks to find limits of matrix elements of this data along $H$ with respect to pseudo-differential
operators $Op_H(a)$ on $H$. The main idea is that $S^*_H M$, the set of unit covectors with footpoints
on $H$, is a cross-section to the geodesic flow and the first return map of the geodesic flow for $S^*_H M$
is ergodic. The Cauchy data should be the quantum analogue of such a cross section and therefore
should be quantum ergodic on $H$. 

For  applications to nodal sets and other problems, it is important to know if the Dirichlet data
alone satisfies a QER theorem. The answer is obviously `no' in general. For instance if $(M, g)$
has an isometric involution and with a hypersurface $H$ of fixed points, then any eigenfunction which
is odd with respect to the involution vanishes on $H$. But in \cite{TZ2,TZ3} a sufficient condition
is given for quantum ergodic restriction, which rules out this and more general situations.  The symmetry
condition is that geodesics emanating from the `left side' of $H$ have a different return map from geodesics
on the `right side' when the initial conditions are reflections of each other through $TH$.   To take the simplest example of the circle, the restriction of $\sin k x$ to a point
is never quantum ergodic but the full Cauchy data $(\cos k x, \sin k x)$ of course satisfies $\cos^2 kx + \sin^2 kx = 1$.
In \cite{CTZ} it is proved that Cauchy data always satisfies QER for any hypersurface. This has implications for 
(at least complex) zeros of even or odd eigenfunctions along an axis of symmetry, e.g. for the case of Maass 
forms for the modular domain $SL(2, \Z)/\HH^2$  (see \S \ref{NODALGEOS}).

To state the QER theorem, we introduce some notation.   We put
\begin{equation} T^*_H M = \{(q, \xi) \in T_q^* M, \;\; q\in H\} , \;\;\;T^* H=  \{(q, \eta) \in T_q^* H, \;\; q\in H\}. \end{equation}  We further denote by $ \pi_H : T^*_H M \to T^*
H $  the restriction map,
\begin{equation} \label{RESCV}  \pi_H(x, \xi)  = \xi |_{TH}.
\end{equation}

For any orientable (embedded) hypersurface $H \subset M$, there
exists two unit normal co-vector fields $\nu_{\pm}$ to $H$ which
span half ray bundles $N_{\pm} = \R_+ \nu_{\pm} \subset N^* H$.
Infinitesimally, they define two `sides' of $H$, indeed they are
the two components of  $T^*_H M \backslash T^* H$. We  use
Fermi normal coordinates $(s, y_n)$ along $H$ with $s \in H$ and
with $x = \exp_x y_n \nu$ and  let $\sigma, \eta_n$ denote the dual
symplectic coordinates.
For  $(s, \sigma) \in B^* H$ (the co-ball bundle),   there exist two unit
covectors $\xi_{\pm}(s, \sigma) \in S^*_s M$ such that
$|\xi_{\pm}(s, \sigma)| = 1$ and $\xi|_{T_s H} = \sigma$. In the
above orthogonal decomposition, they are given by
\begin{equation} \label{xipm} \xi_{\pm}(s, \sigma) = \sigma \pm \sqrt{1 - |\sigma|^2}
\nu_+(s). \end{equation} We define the reflection involution
through $T^* H$ by
\begin{equation}\label{rHDEF}  r_H: T_H^* M \to T_H^* M, \;\;\;\;
r_H(s, \mu\; \xi_{\pm}(s, \sigma)) =(s, \mu\; \xi_{\mp}(s,
\sigma)), \,\,\,  \mu \in \R_{+}.
\end{equation} Its  fixed point
set is $T^* H$.

We denote by $G^t$ the homogeneous  geodesic flow of $(M, g)$,
i.e.  Hamiltonian flow on $T^*M - 0$ generated by $|\xi|_g$. 
We define the {\it first return time}
$T(s, \xi)$ on $S^*_H M$ by,
\begin{equation} \label{FRTIME} T(s, \xi) = \inf\{t > 0: G^t (s,
\xi) \in S^*_H M, \;\;\ (s, \xi) \in S^*_H M)\}. \end{equation} By
definition $T(s, \xi) = + \infty$ if the trajectory through $(s,
\xi)$ fails to return to $H$. 
Inductively, we
define the jth return time $T^{(j)}(s, \xi)$ to $S^*_H M$ and the
jth return map $\Phi^j$ when the return times are finite.

We define the first return map on the same domain by
\begin{equation} \label{FIRSTRETURN} \Phi: S^*_H M \to S^*_H M, \;\;\;\; \Phi(s, \xi) = G^{T(s, \xi)} (s, \xi) \end{equation}
When $G^t$ is ergodic, $\Phi$ is defined almost everywhere and is
also ergodic with respect to Liouville measure  $\mu_{L, H}$  on $S^*_H M$. 

\begin{defin} \label{ANC}  We say that  $H$ has a positive
measure of microlocal reflection symmetry if 
$$  \mu_{L, H} \left( \bigcup_{j \not= 0}^{\infty}  \{(s, \xi) \in S^*_H M : r_H G^{T^{(j)}(s, \xi)} (s, \xi)  =
 G^{T^{(j)}(s, \xi)} r_H (s, \xi)  \}\right) > 0.  $$ 
Otherwise we say that $H$ is asymmetric with respect to the geodesic flow. 

\end{defin}

The QER theorem we state below   holds for both poly-homogeneous (Kohn-Nirenberg) pseudo-differential
operators as in \cite{HoI-IV}  and also for semi-classical
 pseudo-differential operators on $H$ \cite{Zw}  with essentially the same proof.   To avoid confusion between pseudodifferential operators  on the ambient manifold $M$  and those on $H$,  we denote
 the latter by $Op_H(a)$ where $a \in S^{0}_{cl}(T^*H).$ 
 By Kohn-Nirenberg  pseudo-differential operators
we mean operators  with classical poly-homogeneous symbols   $a(s,\sigma) \in C^{\infty}(T^*H),$ 
 $$ a(s,\sigma) \sim \sum_{k=0}^{\infty} a_{-k}(s,\sigma), \,\,(a_{-k} \; \mbox{ positive homogeneous of order} 
\; -k) $$
 as $|\sigma| \rightarrow \infty$  on $T^* H$ as in \cite{HoI-IV}.  By semi-classical pseudo-differential operators
we mean $h$-quantizations of   semi-classical  symbols  $a \in S^{0,0}(T^*H \times (0,h_0])$
of the form
 $$ a_{h} (s,\sigma) \sim \sum_{k=0}^{\infty} h^k \;a_{-k}(s,\sigma), \,\,(a_{-k} \; \in  S_{1,0}^{0}(T^* H)) $$ as in
\cite{Zw,HZ,TZ}.

We further introduce the zeroth order homogeneous function
\begin{equation}\label{gammaDEF} \gamma(s, y_n, \sigma, \eta_n) =  \frac{|\eta_n|}{\sqrt{|\sigma|^2
+ |\eta_n|^2}} = (1 - \frac{|\sigma|^2}{r^2})^{\half} ,\;\;\;(r^2 = |\sigma|^2 + |\eta_n|^2)  \end{equation}    
on $T^*_H M$ and also denote by
\begin{equation} \label{gammaBH} \gamma_{B^*H} = (1 - |\sigma|^2)^{\half} \end{equation}
its restriction to $S^*_H M = \{r = 1\}$.

For homogeneous pseudo-differential operators, the QER theorem is as follows:

   \begin{theo} \label{phgtheorem} \cite{TZ,TZ2,DZ} Let $(M, g)$ be a compact manifold with ergodic geodesic flow, and let  $H \subset
      M$ be a hypersurface.  Let $\phi_{\lambda_j}; j=1,2,...$ denote the
       $L^{2}$-normalized eigenfunctions of $\Delta_g$.
       If $H$ has a zero measure of microlocal symmetry, then
 there exists a  density-one subset $S$ of ${\mathbb N}$ such that
  for $\lambda_0 >0$ and  $a(s,\sigma) \in S^{0}_{cl}(T^*H)$
$$ \lim_{\lambda_j \rightarrow \infty; j \in S} \langle  Op_H(a) 
 \gamma_H \phi_{\lambda_j}, \gamma_H \phi_{\lambda_j}\rangle_{L^{2}(H)} = \omega(a), $$
 where 
 $$  \omega(a) = \frac{2}{ vol(S^*M) } \int_{B^{*}H}  a_0( s, \sigma )  \,  \gamma^{-1}_{B^*H}(s,\sigma)  \, ds d\sigma. $$

\end{theo}

 Alternatively, one can write  $ \omega(a) = \frac{1}{vol (S^*M)} \int_{S^*_H M} a_0(s, \pi_H(\xi)) d\mu_{L, H} (\xi).$  Note that  $a_0(s, \sigma)$ is bounded but is not
defined for $\sigma = 0$, hence $ a_0(s, \pi_H(\xi))$  is not
defined for $ \xi \in N^* H$ if  $a_0(s,\sigma)$  is homogeneous of order zero on $T^* H$.
The analogous result for semi-classical pseudo-differential operators is:

  \begin{theo} \cite{TZ,TZ2,DZ} \label{sctheorem} Let $(M, g)$ be a compact manifold with ergodic geodesic flow, and let  $H \subset
      M$ be a hypersurface.  
       If $H$ has a zero measure of microlocal symmetry, then
 there exists a  density-one subset $S$ of ${\mathbb N}$ such that
  for $a \in S^{0,0}(T^*H \times [0,h_0)),$
$$ \lim_{h_j \rightarrow 0^+; j \in S} \langle Op_{h_j}(a)
\gamma_H \phi_{h_j}, \gamma_H \phi_{h_j}\rangle_{L^{2}(H)} = \omega(a), $$
 where 
 $$  \omega(a) = \frac{2}{ vol(S^*M) } \int_{B^{*}H}  a_0( s, \sigma )  \,  \gamma^{-1}_{B^*H}(s,\sigma)  \, ds d\sigma.$$
\end{theo}

Examples of asymmetric curves on surfaces in the case where   $(M, g)$ is  a finite area hyperbolic surface are the following:
\begin{itemize}

\item   $H$ is a geodesic
circle; 

\item $H$ is  a closed horocycle  of radius $r < inj(M, g)$, the
injectivity radius.

\item $H$ is a generic closed geodesic or an arc of a generic non-closed geodesic.

\end{itemize}

\section{Critical points}

In this section, we briefly discuss some analogues of \eqref{ID} and \eqref{1} for critical points on 
surfaces.  To be sure, it is not hard to generate many identities; the main problem is to derive information from them.

 We denote the gradient of a function $\phi$  by $\nabla \phi$ and its Hessian by $ \nabla^2 \phi : = \nabla d \phi$,
where $\nabla$ is the Riemannian connection.  We also denote the area form by $d A $ and
 the scalar curvature 
by $K$. 
   The results
are based on unpublished work in progress  of the author. It is often said that measuring critical point
sets and values is much more difficult than measuring nodal sets, and in a sense the identities reflect this
difficulty, and we immediately see one difficulty in that the identities become signed:

\begin{prop} \label{MAINPROP} Suppose that $(M, g)$ is a Riemannian surface, and
that $\phi$ is a Morse eigenfunction with 
 $(\Delta + \lambda^2) \phi = 0$.  Let $V \in C^2(M)$. Then
\begin{equation} \label{IDENTITY2} \begin{array}{lll}     2\pi \sum_{p: d \phi(p) = 0}  \mbox{sign} (\det \nabla^2 \phi(p)) \; V(p)   & =  & 
  2 \lambda^2 \int_M \frac{ \phi }{|\nabla \phi|}  \frac{\nabla V \cdot \nabla \phi}{|\nabla \phi|} d A + 2  \int_M K V dA  \\ && \\ & &  - 
 \int_M (\Delta V)  \log |\nabla \phi|^2 d A. 
 
 \end{array}  \end{equation} 
\end{prop}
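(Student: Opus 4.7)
The plan is to derive the identity by combining a weighted Poincar\'e--Hopf / Gauss-Bonnet formula for the gradient vector field $\nabla\phi$ with the Bochner--Weitzenb\"ock identity for $|\nabla\phi|^2$, handling the singularities at critical points via a regularization-and-limit argument. Since $\phi$ is Morse, at each critical point $p$ the Poincar\'e--Hopf index of $\nabla\phi$ equals $\mathrm{sign}(\det\nabla^2\phi(p))$. Let $\omega$ denote the Levi-Civita connection 1-form in the oriented orthonormal frame $(\nabla\phi/|\nabla\phi|,\,J\nabla\phi/|\nabla\phi|)$, defined off the critical set; then $d\omega=-K\,dA$, and $\oint_{\partial B_\epsilon(p)}\omega\to 2\pi\,\mathrm{sign}(\det\nabla^2\phi(p))$ as $\epsilon\to 0$. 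Applying Stokes to $V\omega$ on $M_\epsilon := M\setminus\bigcup_p B_\epsilon(p)$ and letting $\epsilon\to 0$ yields
$$2\pi\sum_{p}\mathrm{sign}(\det\nabla^2\phi(p))\,V(p) \;=\; \int_M VK\,dA \;-\; \int_M dV\wedge\omega.$$

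The next task is to express $\int_M dV\wedge\omega$ as the remaining terms on the right-hand side. A direct frame computation gives $\omega(Y)=g(\nabla_Y\nabla\phi,J\nabla\phi)/|\nabla\phi|^2$, and wedging with $dV$ splits the integrand into a trace piece and a traceless-Hessian piece by Hessian symmetry. The trace piece, simplified via $\Delta\phi=-\lambda^2\phi$, produces the contribution $\lambda^2\int_M \phi(\nabla V\cdot\nabla\phi)/|\nabla\phi|^2\,dA$. The traceless piece is processed via Bochner--Weitzenb\"ock on a surface, $\tfrac{1}{2}\Delta|\nabla\phi|^2 = |\nabla^2\phi|^2 -\lambda^2|\nabla\phi|^2 + K|\nabla\phi|^2$, combined with the elementary identity $\Delta\log f = \Delta f/f - |\nabla f|^2/f^2$ applied to $f=|\nabla\phi|^2$, which yields on $M\setminus\mathrm{Crit}(\phi)$
$$\Delta\log|\nabla\phi|^2 \;=\; \frac{4\lambda^2\phi\,\mathrm{Hess}_0\phi(\nabla\phi,\nabla\phi)}{|\nabla\phi|^4} \;-\; 2\lambda^2 \;+\; 2K,$$
where $\mathrm{Hess}_0 = \nabla^2\phi - \tfrac{1}{2}(\Delta\phi)g$ is the traceless Hessian. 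Integrating this identity against $V$ and moving $\Delta$ onto $V$ converts the traceless-Hessian integrand into $-\int_M (\Delta V)\log|\nabla\phi|^2\,dA$ together with $\int_M KV\,dA$ and eigenvalue-type corrections; matching these against the first-step Gauss-Bonnet contribution $\int_M VK\,dA$ delivers the desired right-hand side.

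The main obstacle will be making rigorous the integration by parts against $\log|\nabla\phi|^2$ near the critical points of $\phi$. In geodesic normal coordinates at $p$, $|\nabla\phi|^2 = (\nabla^2\phi(p))^2(x,x)+O(|x|^3)$ is a positive definite quadratic form, so $\log|\nabla\phi|^2 = 2\log r + O(1)$ is in $L^1_{\mathrm{loc}}$, but its radial derivative of order $2/r$ produces finite boundary circle integrals $\oint_{\partial B_\epsilon(p)} V\,\partial_\nu\log|\nabla\phi|^2\,ds$ in the $\epsilon\to 0$ limit that are \emph{unsigned}. These must be shown to cancel against matching boundary contributions from the Stokes step on $\omega$. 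The remaining \emph{signed} part of the boundary contribution is exactly the angular winding of the complexified gradient: writing $\phi_x+i\phi_y \sim A_1 z+A_2\bar z$ in local isothermal coordinates near $p$, the eigenvalue equation forces $A_1=-\lambda^2\phi(p)/2$ and a direct computation gives $|A_1|^2-|A_2|^2 = \det\nabla^2\phi(p)$, so the winding of the ellipse $\theta\mapsto A_1 e^{i\theta}+A_2 e^{-i\theta}$ about the origin is precisely $\mathrm{sign}(\det\nabla^2\phi(p))$. Carefully tracking all boundary contributions along $\partial B_\epsilon(p)$ in both the Stokes-for-$\omega$ argument and the $\log|\nabla\phi|^2$-integration-by-parts, and isolating the signed angular pieces from the unsigned radial ones, is the crux of the proof.
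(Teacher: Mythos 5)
The core strategy is sound: applying Stokes' theorem to $V\omega$, where $\omega$ is the connection one-form in the gradient frame, and invoking $d\omega=-K\,dA$ together with the Poincar\'e--Hopf index of $\nabla\phi$, is exactly the right skeleton. Your computation $|A_1|^2-|A_2|^2=\det\nabla^2\phi(p)$ (hence winding $=\mathrm{sign}\det\nabla^2\phi(p)$), the formula $\omega(Y)=g(\nabla_Y\nabla\phi,J\nabla\phi)/|\nabla\phi|^2$, and the derived identity $\Delta\log|\nabla\phi|^2 = 4\lambda^2\phi\,\mathrm{Hess}_0(\nabla\phi,\nabla\phi)/|\nabla\phi|^4 - 2\lambda^2 + 2K$ are all correct.

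However, the plan to feed the ``traceless piece'' of $dV\wedge\omega$ through Bochner--Weitzenb\"ock has a genuine gap, because the two objects in question are not the same tensor contraction. In the gradient frame with $a=\nabla V\cdot\nabla\phi$, $b=\nabla V\cdot J\nabla\phi$, and Hessian entries $h,k,l$, a direct computation gives $dV\wedge\omega = \frac{al-bk}{|\nabla\phi|^2}\,dA$; using $h+l=-\lambda^2\phi$ one finds
\[
dV\wedge\omega = \Bigl(-\frac{\lambda^2\phi\,(\nabla V\cdot\nabla\phi)}{|\nabla\phi|^2} - \frac{\nabla^2\phi(\nabla\phi,\nabla V)}{|\nabla\phi|^2}\Bigr)\,dA.
\]
The second term carries $\nabla V$ in one slot, whereas your Bochner identity supplies $\mathrm{Hess}_0(\nabla\phi,\nabla\phi)$ with $\nabla\phi$ in both slots; there is no conversion between them, so integrating the Bochner identity against $V$ produces the wrong integrand. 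Moreover the boundary cancellation you hope for will not materialize: the boundary contribution from $\int V\Delta\log|\nabla\phi|^2$ is the \emph{unsigned} quantity $4\pi\sum_p V(p)$ (from $\log|\nabla\phi|^2\sim 2\log r$), while the Stokes boundary term is the \emph{signed} quantity $2\pi\sum_p\mathrm{sign}_p V(p)$, and these are independent. The correct route is simpler and dispenses with Bochner entirely: since $\nabla^2\phi(\nabla\phi,\nabla V)=\tfrac12\nabla V\cdot\nabla|\nabla\phi|^2 = \tfrac{|\nabla\phi|^2}{2}\nabla V\cdot\nabla\log|\nabla\phi|^2$, one gets
\[
-\int_M dV\wedge\omega = \lambda^2\int_M\frac{\phi\,(\nabla V\cdot\nabla\phi)}{|\nabla\phi|^2}\,dA + \tfrac12\int_M\nabla V\cdot\nabla\log|\nabla\phi|^2\,dA,
\]
and a single integration by parts (whose boundary term vanishes trivially since $\epsilon\log\epsilon\to0$; no delicate cancellation is needed) replaces the last integral by $-\tfrac12\int_M(\Delta V)\log|\nabla\phi|^2\,dA$.

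Carrying this through, the identity you obtain is $2\pi\sum_p\mathrm{sign}_p\,V(p) = \int_M KV\,dA + \lambda^2\int_M\frac{\phi(\nabla V\cdot\nabla\phi)}{|\nabla\phi|^2}\,dA - \tfrac12\int_M(\Delta V)\log|\nabla\phi|^2\,dA$, with right-hand coefficients $(1,\lambda^2,\tfrac12)$ rather than the stated $(2,2\lambda^2,1)$. The stated form fails the $V\equiv1$ consistency check: Hopf gives $2\pi\chi(M)$ on the left, but $2\int_M K\,dA = 4\pi\chi(M)$ on the right. Your trace-piece coefficient $\lambda^2$ is therefore the correct one; the proposition as printed appears to carry a uniform factor-of-two error on the right-hand side.
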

Here, $ \mbox{sign} (\det \nabla^2 \phi(p)) = 1$ if $p$ is a local maximum or minimum and $= -1$ if $p$ is a saddle point.  When $V \equiv 1$,  the identity reduces to the Gauss-Bonnet theorem $\int K d A = 2 \pi \chi(M)$ and the Hopf
index formula  $\chi(M) = \sum_{x: \nabla \phi(x) = 0}    \mbox{sign} (\det \nabla^2 \phi(p)) . $ As this indicates,
the main problem with applying the identity to counting critical points is that the left side is an alternating sum over critical points rather than a positive sum. In \cite{Dong} a related
identity using $|\nabla \phi|^2 + \lambda^2 \phi^2$ produced a sum of constant sign over the singular
points of $\phi$, but singular points are  always saddle points of index $-1$ and hence  of constant sign.
 Note that under the Morse assumption, $\log |\nabla \phi|, |\nabla \phi|^{-1} \in L^1(M, dA)$,
so that the right side is a well defined measure integrated against $V$.

We now make some interesting choices of $V$. As mentioned above, (weighted) counting of critical values should
be simpler than weighted counting of critical points. Hence we put $V = f(\phi)$ for smooth $f$. This choice does give cancellation of the `bad factor'
$|\nabla \phi|^{-1}$ and (using that
$\Delta f(\phi) = f''(\phi) |\nabla \phi|^2 - f'(\phi) \lambda^2 \phi$) we get

\begin{maincor}\label{V=fphi} With the assumptions of Proposition \ref{MAINPROP},  if $f \in C^2(\R)$, then
\begin{equation} \label{IDENTITYfphi} \begin{array}{lll}     2\pi \sum_{p: d \phi(p) = 0} \mbox{sign} (\det \nabla^2 \phi(p)) f(\phi (p)) \;   & =  & 
      2 \lambda^2 \int_M  \phi f'(\phi)  d A + 2  \int_M K f(\phi) dA  \\ && \\ & &  - 
 \int_M (f''(\phi) |\nabla \phi|^2 - f'(\phi) \lambda^2 \phi))  \log |\nabla \phi|^2 d A. 
 \end{array}  \end{equation}
\end{maincor}
Of course, this still has the defect that the left side is an oscillating sum, and 
the factor $f(\phi)$ in the sum damps out the critical points in regions of exponential decay. 
To illustrate,  if $f(x) = x$ we get 
\begin{equation} \label{IDENTITYphi} \begin{array}{lll}     2\pi \sum_{p: d \phi(p) = 0} \mbox{sign} (\det \nabla^2 \phi(p)) \phi(p) \;   & =  & 
    2  \int_M K \phi dA  + \lambda^2
 \int_M \phi  \log |\nabla \phi|^2 d A.
 \end{array}  \end{equation}

To highlight the sign issue, we break  up the sum into the sub-sum over maxima/minima and the sub-sum over saddle points, denoting
 the set of local maxima (resp. minima) by $\max$ (resp. $\min$) and the set of saddle points by $Sad$. Of course
we have $\#(\max \cup \min) - \# Sad = \chi(M)$.  Then \eqref{IDENTITYfphi} is equivalent to

\begin{equation} \label{IDENTITYfphib} \begin{array}{lll}     2\pi \sum_{p \in \max \cup \min) } f(\phi (p)) \;   & =  & 
     2\pi \sum_{p \in Sad) } f(\phi (p)) \;  +
      2 \lambda^2 \int_M  \phi f'(\phi)  d A + 2  \int_M K f(\phi) dA  \\ && \\ & &  - 
 \int_M (f''(\phi) |\nabla \phi|^2 - f'(\phi) \lambda^2 \phi))  \log |\nabla \phi|^2 d A. 
 \end{array}  \end{equation}

 We write $\log r = \log_+ r - \log_- r$ where
$\log_+ r= \max\{\log r, 0\}$. We note that on any compact Riemannian manifold, $\log_+ |\nabla \phi|^2 = O(\log \lambda)$ uniformly in $x$
as $\lambda \to \infty$ while $\log_- |\nabla \phi|^2$ can be quite complicated to estimate.  When $f = x^2$ we get,

\begin{equation} \label{IDENTITYphi2} \begin{array}{lll}     2\pi \sum_{p \in \max, \min}  \phi^2 (p) \;   & =  & 
2 \pi \sum_{p \in Sad} \phi(p)^2 \\ && \\ && +  
 4 \lambda^2 +      
2  \int_M (\lambda^2\phi^2 - |\nabla \phi|^2)  \log |\nabla \phi|^2 d A +  2  \int_M K  \phi^2 dA.
 \end{array}  \end{equation}
 Assuming $\phi$ is a Morse eigenfunction, this implies

\begin{equation} \label{IDENTITYphi2cor} \begin{array}{lll}      \sum_{p \in \max, \min}  \phi^2 (p) \;   & \leq  & 
 \sum_{p \in Sad} \phi(p)^2 +   O(\lambda^2 \log \lambda).
 \end{array}  \end{equation}

To get rid of the signs in the sum, we could   choose  $V =   W \det \nabla^2 \phi$, where the determinant is defined by the metric.  Since
$(\mbox{sign} \det \nabla^2 \phi) \det \nabla^2 \phi = |\det \nabla^2 \phi|$ we obtain

\begin{equation} \label{IDENTITYdet} \begin{array}{lll}     2\pi \sum_{p: d \phi(p) = 0} |\det \nabla^2 \phi(p)| W(p) ) \;   & =  & 
  2 \lambda^2 \int_M \frac{ \phi }{|\nabla \phi|}  \frac{\nabla  (W \det \nabla^2 \phi) \cdot \nabla \phi}{|\nabla \phi|} d A + 2  \int_M K  W  \det \nabla^2 \phi dA  \\ && \\ & &  - 
 \int_M (\Delta W  \det \nabla^2 \phi)  \log |\nabla \phi|^2 d A. 
 \end{array}  \end{equation}
But the first term appears to be  difficult to estimate.

\section{Analytic continuation of eigenfunctions for real analytic $(M, g)$}

We now take up the theme mentioned in the introduction of analytically continuing eigenfunctions on
real analytic $(M, g)$ to the complex domain.  In the next sections we apply the analytic continuation to the study of nodal
 of eigenfunctions in the real analytic case.  For background we refer to \cite{LS1,LS2,GS1,GS2,GLS,Z8}.

A real analytic manifold $M$ always possesses a unique
complexification $M_{\C}$ generalizing the complexification of
$\R^m$ as $\C^m$. The complexification is  an open  complex
manifold in  which $M$ embeds $\iota: M \to M_{\C}$  as a totally
real submanifold (Bruhat-Whitney). As examples, we have:

\begin{itemize}

\item  $M = \R^m/\Z^m$ is $M_{\C} = \C^m/\Z^m$.

\item   The unit sphere $S^n$ defined by  $x_1^2 + \cdots +
x_{n+1}^2 = 1$ in $\R^{n+1}$ is complexified as the complex
quadric $S^2_{\C} = \{(z_1, \dots, z_n) \in \C^{n + 1}: z_1^2 +
\cdots + z_{n+1}^2 = 1\}. $

\item  The hyperboloid model of hyperbolic space is the
hypersurface in $\R^{n+1}$ defined by
$$\Hh^n = \{ x_1^2 + \cdots
x_n^2 - x_{n+1}^2 = -1, \;\; x_n > 0\}. $$ Then,
$$H^n_{\C} = \{(z_1, \dots, z_{n+1}) \in \C^{n+1}:  z_1^2 + \cdots
z_n^2 - z_{n+1}^2 = -1\}. $$

\item Any real algebraic subvariety of $\R^m$ has a similar
complexification.

\item Any  Lie group $G$ (or symmetric space) admits a
complexification $G_{\C}$.
\end{itemize}

The Riemannian metric determines a special kind of distance
function on $M_{\C}$  known as a Grauert tube function. It is the  plurisubharmonic function $\sqrt{\rho} =
\sqrt{\rho}_g$ on $M_{\C}$ defined as the unique solution of the
Monge-Amp\`ere equation
$$(\ddbar \sqrt{\rho})^m = \delta_{M_{\R}, dV_g}, \;\; \iota^*
(i \ddbar \rho) = g. $$ Here, $\delta_{M_{\R}, dV_g}$ is the
delta-function on the real $M$ with respect to the volume form
$dV_g$, i.e. $f \to \int_M f dV_g$. In fact,  it is observed in \cite{GS1,GLS} that the Grauert
tube function is obtained from the distance function by setting $\sqrt{\rho}(\zeta) =
i \sqrt{ r^2(\zeta, \bar{\zeta})}$ where $r^2(x, y)$ is the
squared distance function in a neighborhood of the diagonal in $M
\times M$.

One defines the Grauert tubes $M_{\tau} = \{\zeta \in M_{\C}:
\sqrt{\rho}(\zeta) \leq \tau\}$. There exists a maximal $\tau_0$
for which $\sqrt{\rho}$ is well defined, known as the Grauert tube
radius. For $\tau \leq \tau_0$, $M_{\tau}$ is a strictly
pseudo-convex domain in $M_{\C}$.

The complexified exponential map $(x, \xi) \to exp_x i \xi$
defines a diffeomorphism from $B_{\tau}^* M$ to $M_{\tau}$ and
pulls back $\sqrt{\rho}$ to $|\xi|_g$. The one-complex dimensional
null foliation of $\ddbar \sqrt{\rho}$, known as the
`Monge-Amp\`ere' or Riemann foliation,  are the complex curves $t
+ i \tau \to \tau \dot{\gamma}(t)$, where $\gamma$ is a geodesic,
where $\tau > 0$ and where  $\tau \dot{\gamma}(t)$ denotes
multiplication of the tangent vector to $\gamma$ by $\tau$. We
refer to \cite{LS1,GLS,Z8} for further discussion.

\subsection{Poisson operator and analytic Continuation of eigenfunctions}

The half-wave group of $(M, g)$ is the unitary group
$U(t)  = e^{i t \sqrt{\Delta}}$ generated by the square root of the positive Laplacian. Its Schwartz kernel
is a distribution on $\R \times M \times M$ with  the eigenfunction
expansion
\begin{equation} \label{Ut} U(t, x, y) = \sum_{j = 0}^{\infty} e^{i
t  \lambda_j} \phi_{ j}(x) \phi_j(y).  \end{equation}

By the Poisson operator we mean the analytic continuation of$U(t)$ to positive imaginary time,
\begin{equation} \label{POISSON} e^{- \tau \sqrt{\Delta}}  = U(i \tau) .\end{equation}
The eigenfunction expansion then converges absolutely to a real analytic function on $\R_+ \times M \times M$.

Let $A(\tau)$ denote the operator of analytic continuation of a
function on $M$ to the Grauert tube $M_{\tau}$. Since
\begin{equation} U_{\C} (i \tau) \phi_{\lambda} = e^{- \tau \lambda}
\phi_{\lambda}^{\C}, \end{equation} it  is simple to
see that \begin{equation}  \label{ATAU} A(\tau) = U_{\C}(i \tau) e^{\tau \sqrt{\Delta}} \end{equation} where
 $U_{\C}(i
\tau, \zeta, y)$ is the analytic continuation of the Poisson kernel in $x$ to
$M_{\tau}$. In terms of the eigenfunction expansion, one has
\begin{equation} \label{UI} U(i \tau, \zeta, y) = \sum_{j = 0}^{\infty} e^{-
\tau  \lambda_j} \phi_{ j}^{\C} (\zeta) \phi_j(y),\;\;\; (\zeta,
y) \in M_{\epsilon}  \times M.  \end{equation}  This is a very useful observation because $ U_{\C}(i \tau) e^{\tau \sqrt{\Delta}} $
is a Fourier integral operator with complex phase and can be related to the geodesic flow.  
The analytic continuability of
the Poisson operator to $M_{\tau}$  implies that  every
eigenfunction analytically continues to the same Grauert tube.

\subsection{Analytic continuation of the Poisson wave group}  The analytic continuation of the Possion-wave
kernel to $M_{\tau}$ in the $x$ variable is discussed in detail in \cite{Z8} and ultimately derives from the
analysis by Hadamard of his parametrix construction.  We only briefly discuss  it here and refer to
\cite{Z8} for further details.  In the case 
of Euclidean $\R^n$ and its wave kernel  $U(t, x, y) =
\int_{\R^n} e^{i t |\xi|} e^{i \langle \xi, x - y \rangle} d\xi$
which  analytically continues  to $t + i \tau, \zeta = x + i p \in
\C_+ \times \C^n$ as the integral
$$U_{\C} (t + i \tau , x + i p , y) = \int_{\R^n} e^{i (t + i \tau)  |\xi|} e^{i \langle \xi, x + i p - y
\rangle} d\xi. $$ The integral clearly converges absolutely for
$|p| < \tau.$

Exact formulae of this kind exist for $S^m$ and $\H^m$. For a
general real analytic Riemannian manifold, there exists an
oscillatry integral expression for the wave kernel of the form,
\begin{equation} \label{PARAONE} U (t, x, y) = \int_{T^*_y M} e^{i
t |\xi|_{g_y} } e^{i \langle \xi, \exp_y^{-1} (x) \rangle} A(t, x,
y, \xi) d\xi
\end{equation} where $A(t, x, y, \xi)$ is a polyhomogeneous amplitude of
order $0$.  The
holomorphic extension of (\ref{PARAONE}) to the Grauert tube
$|\zeta| < \tau$ in $x$ at time $t = i \tau$ then has the form
\begin{equation} \label{CXPARAONE} U_{\C} (i \tau,
\zeta, y) = \int_{T^*_y} e^{- \tau  |\xi|_{g_y} } e^{i \langle
\xi, \exp_y^{-1} (\zeta) \rangle} A(t, \zeta, y, \xi) d\xi
\;\;\;(\zeta = x + i p).
\end{equation}

\subsection{Analytic continuation of eigenfunctions}

Thus, a
function   $f \in C^{\infty}(M)$ has a holomorphic extension to
the closed  tube $\sqrt{\rho}(\zeta) \leq \tau$ if and only if $f
\in Dom(e^{\tau \sqrt{\Delta}}), $ where $e^{\tau \sqrt{\Delta}}$
is the backwards `heat operator' generated by $\sqrt{\Delta}$
(rather than $\Delta$).
 That is, $f = \sum_{n = 0}^{\infty}
a_n \phi_{\lambda_n}$ admits an analytic continuation to the open
Grauert tube $M_{\tau}$ if and only if $f$ is in the domain of
$e^{\tau \sqrt{\Delta}}$, i.e. if $\sum_n |a_n|^2  e^{2 \tau
\lambda_n} < \infty $. Indeed, the analytic continuation is
$U_{\C}(i \tau) e^{\tau \sqrt{\Delta}} f$. The subtlety is in the
nature of the restriction to the boundary of the maximal  Grauert
tube.

This result generalizes one of the classical Paley-Wiener theorems
to real analytic Riemannian manifolds \cite{Bou,GS2}. In the
simplest case of $M = S^1$, $f \sim \sum_{n \in \Z} a_n e^{in
\theta} \in C^{\omega}(S^1)$ is the restriction of a holomorphic
function $F \sim \sum_{n \in \Z} a_n z^n $ on the annulus
$S^1_{\tau} = \{|\log |z| | < \tau \}$ and with $F \in
L^2(\partial S^1_{\tau})$ if and only if $\sum_n |\hat{f}(n)|^2 \;
e^{2 |n| \tau} < \infty$. The case of $\R^m$ is more complicated
since it is non-compact. We are mainly concerned with compact
manifolds and so the complications are not very relevant here. But
we recall that one of the classical Paley-Wiener theorems states
that a
 real analytic function $f$ on $\R^n$ is the restriction of a holomorphic
 function on the closed  tube $|\Im \zeta| \leq \tau$ which satisfies
$\int_{\R^m} |F(x + i \xi)|^2 dx \leq C$ for $\xi \leq  \tau$ if
and only if $\hat{f} e^{\tau |\Im \zeta|} \in L^2(\R^n)$.

Let us consider examples of holomorphic continuations of
eigenfunctions:

\begin{itemize}

\item On  the flat torus $\R^m/\Z^m,$   the real eigenfunctions
are $\cos \langle k, x \rangle, \sin \langle k, x \rangle$ with $k
\in 2 \pi \Z^m.$ The complexified torus is $\C^m/\Z^m$ and the
complexified eigenfunctions are $\cos \langle k, \zeta \rangle,
\sin \langle k, \zeta \rangle$ with $\zeta  = x + i \xi.$

\item On the unit sphere $S^m$, eigenfunctions are restrictions of
homogeneous harmonic functions on $\R^{m + 1}$. The latter extend
holomorphically to holomorphic harmonic polynomials on $\C^{m +
1}$ and restrict to holomorphic function on $S^m_{\C}$.

\item On $\H^m$, one may use the hyperbolic plane waves $e^{ (i
\lambda + 1) \langle z, b \rangle}$, where  $\langle z, b \rangle$
is the (signed) hyperbolic distance of the horocycle passing
through $z$ and $b$ to $0$. They  may be holomorphically extended
to the maximal tube of radius $\pi/4$.

\item
 On  compact hyperbolic quotients ${\bf
H}^m/\Gamma$, eigenfunctions can be then represented by Helgason's
generalized Poisson integral formula \cite{H},
$$\phi_{\lambda}(z) = \int_B e^{(i \lambda + 1)\langle z, b
\rangle } dT_{\lambda}(b). $$ Here, $z \in D$ (the unit disc), $B
=
\partial D$, and $dT_{\lambda}
\in \dcal'(B)$ is the boundary value of $\phi_{\lambda}$, taken in
a weak sense along circles centered at the origin $0$.
 To  analytically continue $\phi_{\lambda}$ it suffices  to
analytically continue $\langle z, b\rangle. $ Writing the latter
as  $\langle \zeta, b \rangle,  $ we have:
\begin{equation} \label{HEL} \phi_{\lambda}^{\C} (\zeta) = \int_B e^{(i \lambda +
1)\langle \zeta, b \rangle } dT_{\lambda}(b). \end{equation}

\end{itemize}

\subsection{Complexified spectral projections}

The next step is  to  holomorphically extend the spectral projectors
$d\Pi_{[0, \lambda]}(x,y) = \sum_j \delta(\lambda - \lambda_j)
\phi_j(x) \phi_j(y) $ of $\sqrt{\Delta}$.
 The  complexified
diagonal  spectral projections
 measure is defined by
 \begin{equation} d_{\lambda} \Pi_{[0, \lambda]}^{\C}(\zeta, \bar{\zeta}) = \sum_j \delta(\lambda -
 \lambda_j) |\phi_j^{\C}(\zeta)|^2. \end{equation}
 Henceforth, we generally omit the superscript and write the
 kernel as $\Pi_{[0, \lambda]}^{\C}(\zeta, \bar{\zeta})$.
 This kernel is not a tempered distribution due to the exponential
 growth of $|\phi_j^{\C}(\zeta)|^2$. Since many asymptotic techniques
 assume spectral functions are of polynomial growth,  we simultaneously
 consider the damped spectral projections measure
  \begin{equation} \label{SPPROJDAMPED} d_{\lambda} P_{[0, \lambda]
  }^{\tau}(\zeta, \bar{\zeta}) = \sum_j \delta(\lambda -
 \lambda_j) e^{- 2 \tau \lambda_j} |\phi_j^{\C}(\zeta)|^2, \end{equation}
 which   is a temperate distribution as long as $\sqrt{\rho}(\zeta)
 \leq \tau. $ When we set $\tau = \sqrt{\rho}(\zeta)$ we omit the
 $\tau$ and put
  \begin{equation} \label{SPPROJDAMPEDz} d_{\lambda} P_{[0, \lambda]
  }(\zeta, \bar{\zeta}) = \sum_j \delta(\lambda -
 \lambda_j) e^{- 2 \sqrt{\rho}(\zeta) \lambda_j} |\phi_j^{\C}(\zeta)|^2. \end{equation}

The integral of the spectral measure over an interval $I$  gives
$$\Pi_{I}(x,y) = \sum_{j: \lambda_j \in I} \phi_j(x) \phi_j(y).$$
Its complexification gives the kernel (\ref{CXSPECPROJ}) along the
diagonal,
\begin{equation}\label{CXSP} \Pi_{I}(\zeta, \bar{\zeta}) =
 \sum_{j: \lambda_j \in I}
|\phi_j^{\C}(\zeta)|^2,  \end{equation}  and the integral of
(\ref{SPPROJDAMPED}) gives its temperate version
\begin{equation}\label{CXDSP}  P^{\tau}_{I}(\zeta, \bar{\zeta}) =
 \sum_{j: \lambda_j \in I}  e^{- 2 \tau \lambda_j}
|\phi_j^{\C}(\zeta)|^2,
\end{equation}
or in the crucial case of $\tau = \sqrt{\rho}(\zeta)$,
\begin{equation}\label{CXDSPa}  P_{I}(\zeta, \bar{\zeta}) =
 \sum_{j: \lambda_j \in I}  e^{- 2 \sqrt{\rho}(\zeta)\lambda_j}
|\phi_j^{\C}(\zeta)|^2,
\end{equation}

\subsection{Poisson operator as a complex Fourier integral
operator}

The damped spectral projection measure  $d_{\lambda} \; P_{[0,
\lambda]}^{\tau}(\zeta, \bar{\zeta})$ (\ref{SPPROJDAMPED}) is dual
under the real Fourier transform in the $t$ variable to the
restriction
\begin{equation} \label{CXWVGP} U (t + 2 i \tau, \zeta, \bar{\zeta}) = \sum_j
e^{(- 2 \tau + i t) \lambda_j} |\phi_j^{\C}(\zeta)|^2
\end{equation}  to the
anti-diagonal of the mixed Poisson-wave group. The
adjoint of the Poisson kernel $U(i \tau, x, y)$ also admits an
anti-holomorphic extension in the $y$ variable. The sum
(\ref{CXWVGP}) are the diagonal values of the complexified wave
kernel
\begin{equation} \label{EFORM}
\begin{array}{lll} U (t + 2 i \tau, \zeta, \bar{\zeta}') &  = &
\int_M  U (t + i \tau, \zeta, y)  E(i \tau, y, \bar{\zeta}'
) dV_g(x)\\
&& \\
&&  = \sum_j  e^{(- 2 \tau + i t) \lambda_j} \phi_j^{\C}(\zeta)
\overline{\phi_j^{\C}(\zeta')}.
\end{array}
\end{equation} We obtain
(\ref{EFORM}) by  orthogonality of the real eigenfunctions on $M$.

Since $U(t + 2 i \tau, \zeta, y)$ takes its values in the CR
holomorphic functions on $\partial M _{\tau}$, we consider the
Sobolev spaces $\ocal^{s + \frac{n-1}{4}}(\partial M _{\tau})$ of
CR holomorphic functions on the boundaries of the strictly
pseudo-convex domains $M_{\epsilon}$, i.e.
$${\mathcal O}^{s + \frac{m-1}{4}}(\partial M_{\tau}) =
W^{s + \frac{m-1}{4}}(\partial M_{\tau}) \cap \ocal (\partial
M_{\tau}), $$ where $W_s$  is the $s$th Sobolev space and where $
\ocal (\partial M_{\epsilon})$ is the space of boundary values of
holomorphic functions. The inner product on $\ocal^0 (\partial M
_{\tau} )$ is with respect to the Liouville measure
\begin{equation} \label{LIOUVILLEa} d\mu_{\tau} = (i \ddbar
\sqrt{\rho})^{m-1} \wedge d^c \sqrt{\rho}. \end{equation}

We then regard  $U(t + i \tau, \zeta, y)$ as the kernel of an
operator from $L^2(M) \to \ocal^0(\partial M_{\tau})$. It equals
its composition $ \Pi _{\tau} \circ U (t + i \tau)$ with the
\szego projector
$$ \Pi_{\tau} : L^2(\partial M_{\tau}) \to \ocal^0(
\partial M_{\tau})$$  for the tube $
M_{\tau}$, i.e.   the orthogonal projection onto boundary values
of holomorphic functions in the tube.

 This is a useful expression
for  the complexified wave kernel, because  $\tilde{\Pi}_{\tau}$
is a complex Fourier integral operator with a small wave front
relation. More precisely,  the real points of its  canonical
relation form  the graph $\Delta_{\Sigma}$ of the identity map on
the symplectic one
 $\Sigma_{\tau}
\subset T^*
\partial M_{\tau}$ spanned by the real one-form $d^c \rho$,
i.e. \begin{equation} \label{SIGMATAU} \Sigma_{\tau} = \{(\zeta; r
d^c \rho (\zeta)) ,\;\;\; \zeta \in \partial M_{\tau},\; r > 0 \}
 \subset T^* (\partial M_{\tau}).\;\;  \end{equation}
 We note that for each $\tau,$ there exists a symplectic equivalence $ \Sigma_{\tau} \simeq
 T^*M$ by the map $(\zeta, r d^c \rho(\zeta) )  \to
 (E_{\C}^{-1}(\zeta), r \alpha)$, where $\alpha = \xi \cdot dx$ is
 the action form (cf. \cite{GS2}).

The following result was first stated by  Boutet de Monvel (for
more details, see also \cite{GS2,Z8}).

\begin{theo}\label{BOUFIO}  \cite{Bou, GS2} \label{BDM} $\Pi_{\epsilon} \circ U (i \epsilon): L^2(M)
\to \ocal(\partial M_{\epsilon})$ is a  complex Fourier integral
operator of order $- \frac{m-1}{4}$  associated to the canonical
relation
$$\Gamma = \{(y, \eta, \iota_{\epsilon} (y, \eta) \} \subset T^*M \times \Sigma_{\epsilon}.$$
Moreover, for any $s$,
$$\Pi_{\epsilon} \circ U (i \epsilon): W^s(M) \to {\mathcal O}^{s +
\frac{m-1}{4}}(\partial  M_{\epsilon})$$ is a continuous
isomorphism.
\end{theo}

In \cite{Z8} we give the following sharpening of the sup norm estimates of \cite{Bou,GLS}:

\begin{prop} \label{PW} Suppose  $(M, g)$ is real analytic.  Then
$$ \sup_{\zeta \in M_{\tau}} |\phi^{\C}_{\lambda}(\zeta)| \leq C
  \lambda^{\frac{m+1}{2}} e^{\tau \lambda}, \;\;\;\; \sup_{\zeta \in M_{\tau}} |\frac{\partial \phi^{\C}_{\lambda}(\zeta)}{\partial \zeta_j}| \leq C
  \lambda^{\frac{m+3}{2}} e^{\tau \lambda}
$$
\end{prop}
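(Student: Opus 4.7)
The plan is to deduce both bounds from a pointwise Weyl law for the damped complexified spectral projector $P_{[0,\lambda]}(\zeta, \bar{\zeta})$ introduced in \eqref{CXDSPa}, exploiting the nonnegativity of its summands. The key observation is that since
$$P_{[0,\lambda]}(\zeta,\bar{\zeta}) = \sum_{\lambda_j \leq \lambda} e^{- 2 \sqrt{\rho}(\zeta) \lambda_j} |\phi_j^{\C}(\zeta)|^2$$
is a sum of nonnegative terms, each individual term is controlled by the full sum, giving
$$e^{-2\sqrt{\rho}(\zeta)\lambda} |\phi_{\lambda}^{\C}(\zeta)|^2 \leq P_{[0,\lambda]}(\zeta, \bar{\zeta}).$$
Hence the sup-norm bound is reduced to a bound $P_{[0,\lambda]}(\zeta,\bar{\zeta}) \leq C\lambda^{m+1}$ for $\zeta \in \overline{M_{\tau}}$, which (once established) immediately produces $|\phi_{\lambda}^{\C}(\zeta)| \leq C\lambda^{(m+1)/2} e^{\sqrt{\rho}(\zeta)\lambda} \leq C\lambda^{(m+1)/2} e^{\tau \lambda}$.

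To establish the required Weyl bound, I would analyze the diagonal value of the mixed Poisson-wave kernel \eqref{CXWVGP}, namely $U_{\C}(2i\tau,\zeta,\bar{\zeta})$, as the kernel on the diagonal of $F F^*$ where $F = \Pi_{\tau} \circ U(i\tau)$. By Theorem \ref{BOUFIO}, $F$ is a complex Fourier integral operator of order $-\tfrac{m-1}{4}$ with canonical relation projecting diffeomorphically onto $\Sigma_{\tau}$. Therefore $F F^*$ behaves, up to Szeg\"o-projector factors, like a pseudodifferential operator of order $-\tfrac{m-1}{2}$ on $\partial M_{\tau}$, and the standard H\"ormander local Weyl law applied to its diagonal (combined with a Tauberian step and the BDM composition calculus, or equivalently with the complex stationary-phase analysis of the oscillatory integral \eqref{CXPARAONE} for $U_{\C}(i\tau,\zeta,y)$ composed with its adjoint) yields the pointwise estimate $P_{[0,\lambda]}(\zeta,\bar{\zeta}) = O(\lambda^{m+1})$ uniformly in $\zeta \in \overline{M_{\tau}}$.

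For the gradient bound I would repeat the argument after observing that $\partial_{\zeta_j}$ is a first-order operator, so that the analogous damped projector
$$\sum_{\lambda_j \leq \lambda} e^{-2\sqrt{\rho}(\zeta)\lambda_j} \left|\frac{\partial \phi_j^{\C}(\zeta)}{\partial \zeta_j}\right|^2$$
is of two higher orders in the FIO calculus, hence its pointwise Weyl bound improves to $O(\lambda^{m+3})$. Monotonicity as above then yields $|\partial_{\zeta_j}\phi_{\lambda}^{\C}(\zeta)| \leq C\lambda^{(m+3)/2} e^{\tau \lambda}$.

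The principal obstacle is the pointwise Weyl law at the boundary $\partial M_{\tau}$: the individual summands $e^{-2\tau \lambda_j}|\phi_j^{\C}|^2$ are no longer rapidly decaying, and controlling them requires the exact matching between the exponential growth rate of $\phi_j^{\C}$ (governed by the Grauert tube function $\sqrt{\rho}$) and the damping factor. This matching is precisely what the complex-phase FIO calculus of BDM (Theorem \ref{BOUFIO}) encodes, so the technical heart is verifying that the order-counting in the composition $F F^*$ on $\partial M_{\tau}$ gives exactly $-\tfrac{m-1}{2}$; everything else is a clean application of the Hörmander Weyl law plus term-wise domination.
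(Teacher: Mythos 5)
Your overall strategy --- reduce the supremum bound to a pointwise Weyl estimate for the tempered spectral projector $P_{[0,\lambda]}(\zeta,\bar\zeta)$, using term-wise non-negativity, and then invoke the complex-FIO structure of the Poisson operator $\Pi_\tau\circ U(i\tau)$ from Theorem~\ref{BOUFIO} --- is the same route the paper takes (the paper's proof text is essentially one sentence, deferring to \cite{Z8}, where exactly these ingredients appear). The monotonicity step
$e^{-2\sqrt\rho(\zeta)\lambda}|\phi_\lambda^{\C}(\zeta)|^2 \le P_{[0,\lambda]}(\zeta,\bar\zeta)$
is clean and the way you push $\sqrt\rho(\zeta)\le\tau$ through afterwards is correct. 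The treatment of the gradient by replacing $\phi_j^{\C}$ with $\partial_{\zeta_j}\phi_j^{\C}$ (one order up in the calculus, hence two powers in the squared sum) is also the right bookkeeping.

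Where the write-up is soft is precisely the claimed intermediate estimate $P_{[0,\lambda]}(\zeta,\bar\zeta) = O(\lambda^{m+1})$, which you present as a consequence of ``$FF^*$ is pseudodifferential of order $-\tfrac{m-1}{2}$ on $\partial M_\tau$ plus Hörmander's local Weyl law.'' That reasoning does not compute to the exponent $m+1$: $FF^*$ is not a pseudodifferential operator on $\partial M_\tau$ in the usual sense, but a Toeplitz/Szeg\H{o}-type operator whose microsupport is the $2m$-dimensional symplectic cone $\Sigma_\tau\subset T^*\partial M_\tau$ (see \eqref{SIGMATAU}); the Boutet de Monvel--Guillemin Toeplitz Weyl law counts phase-space volume in $\Sigma_\tau$, not in all of $T^*(\partial M_\tau)$, and the exponent one obtains from the correct Tauberian step is smaller. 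In fact a careful singularity analysis of $U(t+2i\tau,\zeta,\bar\zeta)$ at $t=0$ gives $P_{[0,\lambda]}(\zeta,\bar\zeta)=O(\lambda^{(m+1)/2})$ for $\zeta$ on $\partial M_\tau$ (interpolating to $O(\lambda^m)$ as $\zeta\to M$), which would yield the stronger bound $|\phi_\lambda^{\C}(\zeta)|\lesssim\lambda^{(m+1)/4}e^{\tau\lambda}$ --- exactly the improvement the paper alludes to in ``the estimates can be improved further.'' So your chosen exponent is \emph{sufficient} for the stated proposition but is neither derived by the stated mechanism nor sharp; the proof as written relies on an unproven asserted inequality. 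A cleaner route to the stated (non-sharp) proposition, staying in your framework, is to replace $P_{[0,\lambda]}$ by a spectrally-localized version $P_{[\lambda-1,\lambda]}$, apply the Tauberian argument to the complexified wave trace near $t=0$ to obtain $P_{[\lambda-1,\lambda]}(\zeta,\bar\zeta)=O(\lambda^{(m-1)/2})$, and use the single-term bound only in the window that actually contains $\phi_\lambda$.
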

The proof follows  easily from the fact that the complexified
Poisson kernel is a complex Fourier integral operator of finite
order. The estimates can be improved further.

\subsection{Maximal plurisubharmonic functions and growth of $\phi_{\lambda}^{\C}$}

In \cite{Z8}, we discussed   analogues in the setting of Gruaert tubes for
the basic notions of pluripotential theory on domains in $\C^m$.
Of relevance here is that  the Grauert tube function $\sqrt{\rho}$ is the
analogue of the pluri-complex Green's function. 
We recall that the
maximal PSH function (or pluri-complex Green's function) relative
to a subset $E \subset \Omega$ is defined by
$$V_{E}(\zeta) = \sup\{u(z): u \in PSH(\Omega), u|_{E}  \leq 0, u
|_{\partial \Omega} \leq 1\}. $$

On a real analytic Riemannian manifold, the natural analogue of
$\pcal^N$ is the space
$$\hcal^{\lambda} = \{p =  \sum_{j: \lambda_j \leq \lambda} a_j
\phi_{\lambda_j}, \;\; a_1, \dots, a_{N(\lambda)} \in \R  \} $$
spanned by eigenfunctions with frequencies $\leq \lambda$. Rather
than using the sup norm, it is convenient  to work with $L^2$
based norms than sup norms, and so we define
$$ \hcal^{\lambda}_M = \{p =  \sum_{j: \lambda_j \leq \lambda} a_j
\phi_{\lambda_j}, \;\;||p||_{L^2(M)} =  \sum_{j = 1}^{N(\lambda)}
|a_j|^2 = 1 \}. $$ We define the   $\lambda$-Siciak extremal
function by
 $$ \Phi_M^{\lambda} (z) = \sup \{|\psi(z)|^{1/\lambda} \colon
\psi \in \hcal_{\lambda};   \|\psi \|_M \le 1 \},  $$ and the
extremal function by
$$\Phi_M(z) = \sup_{\lambda} \Phi_M^{\lambda}(z). $$

The extremal PSH function is defined by
$$V_g(\zeta; \tau) =  \sup\{u(z): u \in PSH(M_{\tau}), u|_{M}  \leq 0, u
|_{\partial M_{\tau}} \leq \tau\}. $$ In \cite{Z8} we proved that  $V_g =
\sqrt{\rho}$ and that
\begin{equation} \label{SICIAK} \Phi_M = V_g. \end{equation}
The proof is based on the properties of \eqref{CXSP}. 
By using a Bernstein-Walsh
inequality
$$\frac{1}{N(\lambda)} \leq \frac{\Pi_{[0, \lambda]}(\zeta,
\bar{\zeta})}{\Phi_M^{\lambda}(\zeta)^2} \leq C N(\lambda)\;
e^{\epsilon N(\lambda)}, $$ it is not hard to show that
\begin{equation} \Phi_M(z) = \lim_{\lambda \to \infty} \frac{1}{\lambda} \log  \Pi_{[0,
\lambda}(\zeta, \bar{\zeta}). \end{equation}
 To evaluate the logarithm, one can show that the kernel is essentially $e^{\lambda \sqrt{\rho}}$ times
the temperate projection defined by the Poisson operator,
\begin{equation}\label{CXDSPb}  P_{[0, \lambda]}(\zeta, \bar{\zeta}) =
 \sum_{j: \lambda_j \in [0, \lambda]}  e^{- 2 \sqrt{\rho}(\zeta)\lambda_j}
|\phi_j^{\C}(\zeta)|^2.
\end{equation}
The equality (\ref{SICIAK}) follows from the fact that
 $\lim_{\lambda \to \infty} \frac{1}{\lambda} \log   P_{[0, \lambda]}(\zeta, \bar{\zeta}) = 0$.

We now return to nodal sets, where we will see the same extremal functions arise.

\section{\label{PLANEDOMAIN} Counting nodal lines which touch the boundary in analytic
plane domains}

It is often possible to obtain more refined results on nodal sets
by studying their intersections with some fixed (and often
special) hypersurface. This has been most successful in dimension
two.  In this section, we review the results of \cite{TZ} giving upper bounds
on the number of intersections of the nodal set with the boundary of an analytic (or more
generally piecewise analytic) 
plane domain.  One may expect that the results of
this section can also be generalized to higher dimensions by measuring codimension two nodal hypersurface volumes within
the boundary. 

Thus we would like to  count the number of 
nodal lines (i.e. components of the nodal set)  which touch the boundary. Here we assume that $0$
is a regular value so that components of the nodal set are either loops in the interior (closed nodal
loops)  or curves
which touch the boundary in two points (open nodal lines).  It is known that  for generic piecewise analytic
plane domains, zero is a regular value of all the eigenfunctions
$\phi_{\lambda_j}$, i.e. $\nabla \phi_{\lambda_j} \not= 0$ on
$Z_{\phi_{\lambda_j}}$ \cite{U}; we then call the nodal set
regular.
Since the boundary lies in the nodal set for Dirichlet boundary
conditions, we remove it from the nodal set before counting
components. Henceforth, the number of components of the nodal set
in the Dirichlet case means the number of components of
$Z_{\phi_{\lambda_j}} \backslash \partial \Omega.$

In the following, and henceforth, $C_{\Omega} > 0$ denotes a
positive constant depending only on the domain $\Omega$.

\begin{theo} \label{COR} Let $\Omega$ be a piecewise analytic domain
and  let  $n_{\partial \Omega}(\lambda_j)$ be the number of
components of the nodal set of the $j$th Neumann or Dirichlet
eigenfunction which intersect $\partial \Omega$.  Then there
exists  $C_{\Omega}$   such that $n_{\partial \Omega}(\lambda_j)
\leq C_{\Omega} \lambda_j.$
\end{theo}

By a piecewise
analytic domain $\Omega^2 \subset \R^2$,  we mean a
compact domain with piecewise analytic boundary, i.e. $\partial
\Omega$ is a union of a finite number of piecewise analytic curves
which intersect only at their common endpoints.
Such domains are often studied as archtypes of domains with ergodic billiards
and quantum chaotic eigenfunctions, in particular the 
 Bunimovich stadium or Sinai billiard. Their nodal sets have been the subject of a
number of numerical studies (e.g. \cite{BGS,FGS}).  

In general, there does not exist a non-trivial lower bound for the number of components touching the boundary. 
E.g. in a disc, the zero sets of the eigenfunctions are unions of circles concentric with the origin and spokes emanating
from the center. Only the spokes intersect the boundary and their number reflects the angular momentum rather
than the eigenvalue of the eigenfunction. But
 we  conjecture that for  piecewise analytic domains with
ergodic billiards, the
  the number  of complex zeros of $\phi_{\lambda_j}^{\C}|_{\partial \Omega_{\C}}$
    is bounded below by  $ C_{\Omega} \lambda_j$.  We discuss work in progress on this conjecture in
\S \ref{NODALGEOS}.

In comparison to the order $O(\lambda_j)$ of the number of
boundary nodal points,  the total number of connected components
of $Z_{\phi_{\lambda_j}}$ has the upper bound $O(\lambda_j^2)$
by the Courant nodal domain theorem. It is not known in general whether
the Courant upper bound is achieved, but we expect that it is often achieved
in order of magnitude. In   \cite{NS}  it is proved that the average number of
 nodal components of a random spherical harmonic is of
 order of magnitude $\lambda_j^2$.  Thus, the number of components touching the
boundary is one order of magnitude below the total number of components.

\subsection{Boundary critical points}

The article \cite{TZ} also contains  a similar estimate on the number  of critical
points of $\phi_{\lambda_j}$ which occur on the boundary. We
denote the boundary critical set by
 $$\ccal_{\phi_{\lambda_j}}= \{q \in \partial
\Omega:  (d \phi_{\lambda_j}) (q) = 0\}. $$ In the case of Neumann
eigenfunctions,  $q \in \ccal_{ \phi_{\lambda_j}} \iff d
(\phi_{\lambda_j} |_{\partial \Omega}(q)) = 0 $ since the normal
derivative automatically equals zero on the boundary, while in the
Dirichlet case $q \in \ccal_{\phi_{\lambda_j}} \iff
\partial_{\nu} \phi_{\lambda_j}(q)= 0$ since the boundary is a level
set.

We observe that radial eigenfunctions on the disc are constant on the
boundary; thus, boundary critical point sets need not be isolated.
We therefore impose a non-degeneracy condition on the
tangential  derivative $\partial_t (\phi_{\lambda_j} |_{\partial
\Omega})$ to ensure that its zeros are isolated and can be counted.
We  say that the Neumann
problem for a bounded domain has the  {\it asymptotic Schiffer
property} if there exists $C
> 0$ such that, for all Neumann eigenfunctions $\phi_{\lambda_j}$ with sufficiently large $\lambda_j$,
   \begin{equation} \label{NEUND}
\frac{\|\partial_t \phi_{\lambda_j}\|_{L^{2} (\partial
\Omega)}}{\|\phi_{\lambda_j}\|_{L^{2}(\partial \Omega)}}
      \geq e^{- C \lambda_j}. \end{equation}
     Here, $\partial_t$ is the unit tangential derivative, and the $L^2$ norms refer to the restrictions of the
     eigenfunction to  $\partial \Omega$.

  \begin{theo}\label{CRITSb}  Let $\Omega \subset \R^2$ be piecewise real analytic.
  Suppose that $\phi_{\lambda_j}|_{\partial \Omega}$ satisfies the asymptotic Schiffer  condition (\ref{NEUND})  in the Neumann case.
   Then the
   number of $n_{\mbox{crit}}(\lambda_j) = \# \ccal_{\phi_{\lambda_j} }$ of
   critical points of a Neumann or Dirichlet eigenfunction $\phi_{\lambda_j}$
which lie  on $\partial \Omega$ satisfies  $\nj \leq C_{\Omega}
\lambda_j$ for some $C_{\Omega} > 0$
\end{theo}

   In the case of Dirichlet eigenfunctions, endpoints of open
nodal lines are always boundary critical points, since they must
be singular points of $\phi_{\lambda_j}$.  Hence, an upper bound
for $\nj$ also gives an upper bound for the number of open nodal
lines.

\begin{cor}\label{BNPD}  Suppose that $\Omega \subset \R^2$ is a piecewise real analytic  plane domain.
Let  $n_{\partial \Omega}(\lambda_j)$ be the number of open nodal
lines of the $j$th Dirichlet eigenfunction, i.e. connected
components of $\{\phi_{\lambda_j} = 0\} \subset \Omega^o$ whose
closure  intersects $\partial \Omega$. Then there exists
$C_{\Omega}> 0$  such that  $n_{\partial \Omega}(\lambda_j) \leq
C_{\Omega}  \lambda_j.$
\end{cor}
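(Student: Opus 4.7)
The plan is to deduce this corollary directly from Theorem \ref{CRITSb} together with the structural observation that, for a Dirichlet eigenfunction, every endpoint of an open nodal line lying on the smooth part of $\partial\Omega$ is automatically a point of the boundary critical set $\ccal_{\phi_{\lambda_j}}$. Since the asymptotic Schiffer hypothesis of Theorem \ref{CRITSb} is required only in the Neumann case, the Dirichlet version is unconditional, and the desired $O(\lambda_j)$ bound on $n_{\partial\Omega}(\lambda_j)$ will follow after accounting for the finitely many corner points of the piecewise analytic boundary.

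First I would fix notation: let $\gamma$ denote an open nodal line, i.e.\ a connected component of $\{\phi_{\lambda_j}=0\}\cap\Omega^o$ whose closure meets $\partial\Omega$, and let $q\in\bar{\gamma}\cap\partial\Omega$ be an endpoint lying on a smooth arc of the boundary. The Dirichlet condition forces $\phi_{\lambda_j}\equiv 0$ on $\partial\Omega$, so the tangential derivative $\partial_t\phi_{\lambda_j}(q)=0$ automatically. I would then argue that $\partial_\nu\phi_{\lambda_j}(q)=0$ as well: otherwise the implicit function theorem, applied in Fermi coordinates normal to the boundary arc, would identify the zero set of $\phi_{\lambda_j}$ in a neighborhood of $q$ with precisely the boundary arc itself, contradicting the fact that $\gamma$ is an interior nodal curve accumulating at $q$. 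Hence $q\in\ccal_{\phi_{\lambda_j}}$.

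Each open nodal line $\gamma$ has at least two distinct endpoints on $\partial\Omega$, and each such endpoint is either a boundary critical point (by the previous step) or one of the finitely many corners of the piecewise analytic boundary. Consequently
\begin{equation*}
2\,n_{\partial\Omega}(\lambda_j) \,\leq\, \#\ccal_{\phi_{\lambda_j}} + 2\,\#\{\mbox{corners of }\partial\Omega\}.
\end{equation*}
Applying Theorem \ref{CRITSb} to the first term bounds it by $C_\Omega\lambda_j$, and the corner count is independent of $j$; enlarging the constant $C_\Omega$ to absorb this contribution (and the finitely many small eigenvalues) yields $n_{\partial\Omega}(\lambda_j)\leq C_\Omega\lambda_j$.

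The main obstacle is essentially absent at the level of the corollary itself, since the heavy lifting resides in Theorem \ref{CRITSb}. What requires mild care is that several distinct open nodal arcs may share a common endpoint on $\partial\Omega$, which only strengthens the inequality above (the common endpoint is still counted once in $\ccal_{\phi_{\lambda_j}}$), and that a nodal arc might in principle accumulate tangentially at a boundary point; the latter is ruled out because $\phi_{\lambda_j}$ is real analytic up to each smooth analytic arc of $\partial\Omega$, so its nodal set there is semi-analytic with only finitely many singular points per arc.
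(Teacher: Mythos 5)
Your route is essentially the paper's: observe that the boundary endpoints of a Dirichlet open nodal line must be singular points of $\phi_{\lambda_j}$ --- i.e.\ points of $\ccal_{\phi_{\lambda_j}}$ in the Dirichlet convention $\partial_\nu\phi_{\lambda_j}(q)=0$ --- and then invoke Theorem \ref{CRITSb}, which is unconditional in the Dirichlet case. Your implicit-function-theorem argument for why $\partial_\nu\phi_{\lambda_j}(q)=0$ at an endpoint of an interior nodal arc is a correct (and welcome) expansion of the paper's terse remark that such endpoints ``must be singular points of $\phi_{\lambda_j}$.''

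The one substantive flaw is your treatment of shared endpoints. You claim that several arcs meeting at a common boundary point ``only strengthens'' the inequality $2\,n_{\partial\Omega}(\lambda_j)\le \#\ccal_{\phi_{\lambda_j}}+2\,\#\{\mbox{corners}\}$; in fact it does the opposite. If $k$ arcs terminate at $q$, the left-hand side picks up $k$ from those endpoints while the right-hand side receives only $1$, so the inequality as stated can fail --- and since the vanishing order of $\phi_{\lambda_j}$ at a boundary singular point may be as large as $O(\lambda_j)$, this is not a degenerate case one can wave away. The correct bookkeeping counts boundary critical points \emph{with multiplicity}: writing $\phi_{\lambda_j}(s,t)=t\,\psi(s,t)$ in Fermi coordinates along a smooth boundary arc, the number of interior nodal branches terminating at $q$ is bounded by the order of vanishing of $\psi$ at $q$, hence by the order of vanishing of $\partial_\nu\phi_{\lambda_j}|_{\partial\Omega}$ at $q$, and the sum of these orders over all boundary critical points is exactly what is dominated by the complexified count $n_{\mbox{crit}}(\lambda_j,\partial\Omega_\C)$ of Theorem \ref{mainthm}, since a count of zeros of a holomorphic function is automatically a count with multiplicity. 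This is implicit in the paper's chain Theorem \ref{mainthm} $\Rightarrow$ Theorem \ref{CRITSb} $\Rightarrow$ Corollary \ref{BNPD} (and the survey is admittedly terse about it), but your write-up should make the multiplicity-weighted count explicit rather than assert, incorrectly, that coincident endpoints help.
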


There does not exist a  non-trivial lower bound
on the number of interior critical points \cite{JN}. 

\subsection{Proof by analytic continuation}

For the Neumann problem, the boundary nodal points are the same as
the zeros of the boundary values $\phi_{\lambda_j} |_{\partial
\Omega}$ of the eigenfunctions. The number of boundary nodal
points  is thus twice the number of open nodal lines. Hence in the
Neumann case, Theorem \ref{COR} follows from:

\begin{theo}\label{BNP}  Suppose that $\Omega \subset \R^2$ is a piecewise real analytic  plane domain.
 Then the number $n(\lambda_j) = \# Z_{\phi_{\lambda_j}} \cap \partial
 \Omega$ of zeros of the boundary values  $\phi_{\lambda_j} |_{\partial
\Omega}$ of the $j$th Neumann  eigenfunction satisfies
$n(\lambda_j) \leq C_{\Omega}
 \lambda_j$, for some $C_{\Omega} > 0$.
\end{theo}
This is a more precise version of   Theorem \ref{COR}  since it does not assume that $0$ is
a regular value. 
In keeping with the theme of this survey, we prove Theorem \ref{BNP}  by analytically
continuing the boundary values of the eigenfunctions and counting  {\it complex zeros and critical points} of analytic
continuations of Cauchy data of eigenfunctions. When $\partial
\Omega \in C^{\omega}$,  the eigenfunctions can be holomorphically
continued to an open tube domain  in $\C^2$ projecting over an
open neighborhood $W$ in $\R^2$ of $\Omega$ which is independent
of the eigenvalue.  We denote by $\Omega_{\C} \subset \C^2$ the
points $\zeta = x + i \xi \in \C^2 $ with $x \in \Omega$.  Then
$\phi_{\lambda_j}(x)$ extends to a holomorphic function
$\phi_{\lambda_j}^{\C}(\zeta)$ where $x \in W$ and where $ |\xi|
\leq \epsilon_0$ for some $\epsilon_0 > 0$.  

Assuming $\partial \Omega$ real analytic, we   define the
(interior) complex nodal set by
$$Z_{\phi_{\lambda_j}}^{\C} = \{\zeta \in  \Omega_{\C}:
\phi_{\lambda_j}^{\C}(\zeta) = 0 \},  $$ and the (interior)
complex critical point set by
$$\ccal_{\phi_{\lambda_j} }^{\C} = \{\zeta \in  \Omega_{\C}: d \phi_{\lambda_j}^{\C} (\zeta) =
0\}.$$

\begin{theo} \label{mainthm} Suppose that $\Omega \subset \R^2$ is a piecewise real analytic  plane
domain, and denote by $(\partial \Omega)_{\C}$ the union of the
complexifications of its real analytic boundary components.

\begin{enumerate}

\item  Let
  $n(\lambda_j, \partial \Omega_{\C} ) = \# Z_{\phi_{\lambda_j}}^{\partial \Omega_{\C}}$ be the number
of complex zeros on the complex boundary. 
 Then there exists a constant $C_{\Omega} > 0$ independent of the radius of
  $(\partial \Omega)_{\C}$ such that
$n(\lambda_j, \partial \Omega_{\C})
 \leq C_{\Omega} \lambda_j. $

 \item Suppose that the Neumann eigenfunctions satisfy (\ref{NEUND}) and
  let $n_{\mbox{crit}}(\lambda_j, \partial \Omega_{\C}) = \# \ccal_{\phi_{\lambda_j} }^{\partial
 \Omega_{\C}}$.
 Then there exists $C_{\Omega} > 0$ independent of the radius of
  $(\partial \Omega)_{\C}$ such that  $n_{\mbox{crit}}(\lambda_j, \partial \Omega_{\C})
 \leq C_{\Omega} \lambda_j. $

\end{enumerate}

\end{theo}

The  theorems on real nodal lines and critical points  follow from
the fact that real zeros and critical points are also complex
zeros and critical points, hence
\begin{equation} n(\lambda_j) \leq n(\lambda_j, \partial \Omega_{\C} );  \;\;\;\; \nj \leq
n_{\mbox{crit}}(\lambda_j, \partial \Omega_{\C}). \end{equation}
All of the results are sharp, and are already obtained for certain
sequences of eigenfunctions  on a disc (see \S \ref{EXAMPLES}). If
the condition (\ref{NEUND}) is not satisfied, the boundary value
of $\phi_{\lambda_j}$ must equal a constant $C_j$ modulo an error
of the form $o(e^{- C \lambda_j})$.  We conjecture that this
forces the boundary values to be constant. 

The
method of proof of Theorem \ref{mainthm} generalizes from
$\partial \Omega$ to a rather large class of  real analytic curves
$C \subset \Omega$, even when $\partial \Omega$ is not real
analytic. Let us call a real analytic curve $C$ a {\it good} curve
if there exists a constant $a > 0$ so that  for all $\lambda_j$
sufficiently large,
 \begin{equation}
\label{GOOD} \frac{\|\phi_{\lambda_j}\|_{L^{2} (\partial
\Omega)}}{\|\phi_{\lambda_j}\|_{L^{2}(C)}}
     \leq e^{a \lambda_j} .\end{equation}
     Here, the $L^2$ norms refer to the restrictions of the
     eigenfunction to $C$ and to $\partial \Omega$.
The following result deals with the case where $C \subset
\partial \Omega$ is an {\em interior} real-analytic
   curve. The  real curve $C$ may then
be holomorphically continued to a complex curve  $C_{\C} \subset
\C^2$ obtained by analytically continuing a real analytic
parametrization of $C$.

   \begin{theo}\label{CNPthm}
 Suppose that $\Omega \subset \R^2$ is a $C^{\infty}$ plane
domain, and let $C \subset \Omega$ be a good  interior real
analytic curve in the sense of (\ref{GOOD}). Let
  $n(\lambda_j, C) = \# Z_{\phi_{\lambda_j} } \cap C $ be the number of intersection points of
  the nodal set of the $j$-th Neumann (or Dirichlet) eigenfunction  with $C$.  Then there exists
  $A_{C, \Omega} > 0$ depending only on $C, \Omega$ such that  $n(\lambda_j, C)
  \leq A_{C, \Omega}
 \lambda_j$.

 \end{theo}

A recent paper of J. Jung shows that many natural curves in the hyperbolic plane are `good' \cite{JJ}.

\subsection{Application to Pleijel's conjecture}

We also note an interesting application due to I. Polterovich
\cite{Po} of Theorem \ref{COR} to an old conjecture of A. Pleijel
regarding Courant's nodal domain theorem, which says that the
number $n_k$ of nodal domains (components of  $\Omega \backslash
Z_{\phi_{\lambda_k}}$)
 of the $k$th eigenfunction  satisfies $n_k \leq k$.  Pleijel \cite{P} improved this result for Dirichlet eigefunctions
 of plane domains:
For any plane domain with Dirichlet boundary conditions,
$\limsup_{k \to \infty} \frac{n_k}{k} \leq \frac{4}{j_1^2} \simeq
0. 691...$, where $j_1$ is the first zero of the $J_0$ Bessel
function.  He conjectured that the same result should be true for
a free membrane, i.e. for Neumann boundary conditions. This was
recently proved in the real analytic case
 by I. Polterovich \cite{Po}. His argument is roughly the following: Pleijel's original argument applies to
 all nodal domains which do not touch the boundary, since the eigenfunction is a Dirichlet eigenfunction in such
  a nodal domain. The argument does not apply to nodal domains which touch the boundary, but by Theorem \ref{COR} the number
  of such domains is negligible for the Pleijel bound.

\section{\label{ERGODICNODAL} Equidistribution of complex nodal sets of real ergodic
eigenfunctions on analytic $(M, g)$ with ergodic geodesic flow}

We now consider global results when hypotheses are made on the
dynamics of the geodesic flow.
 Use of the global wave operator brings into
play the relation between the geodesic flow and the complexified
eigenfunctions, and this allows one to prove gobal results on
nodal hypersurfaces that reflect the dynamics of the geodesic
flow. In some cases, one can determine not just the volume, but
the limit distribution of complex nodal hypersurfaces. Since we have discussed 
this result elsewhere \cite{Z6} we only briefly review it here. 

The complex nodal hypersurface of an eigenfunction is   defined by
\begin{equation} Z_{\phi_{\lambda}^{\C}} = \{\zeta \in
B^*_{\epsilon_0} M: \phi_{\lambda}^{\C}(\zeta) = 0 \}.
\end{equation}
There exists  a natural current of integration over the nodal
hypersurface in any ball bundle $B^*_{\epsilon} M$ with $\epsilon
< \epsilon_0$ , given by
\begin{equation}\label{ZDEF}  \langle [Z_{\phi_{\lambda}^{\C}}] , \phi \rangle =  \frac{i}{2 \pi} \int_{B^*_{\epsilon} M} \ddbar \log
|\phi_{\lambda}^{\C}|^2 \wedge \phi =
\int_{Z_{\phi_{\lambda}^{\C}} } \phi,\;\;\; \phi \in \dcal^{ (m-1,
m-1)} (B^*_{\epsilon} M). \end{equation} In the second equality we
used the Poincar\'e-Lelong formula. The notation $\dcal^{ (m-1,
m-1)} (B^*_{\epsilon} M)$ stands for smooth test $(m-1,
m-1)$-forms with support in $B^*_{\epsilon} M.$

The nodal hypersurface $Z_{\phi_{\lambda}^{\C}}$ also carries a
natural volume form $|Z_{\phi_{\lambda}^{\C}}|$ as a complex
hypersurface in a K\"ahler manifold. By Wirtinger's formula, it
equals the restriction of $\frac{\omega_g^{m-1}}{(m - 1)!}$ to
$Z_{\phi_{\lambda}^{\C}}$. Hence, one can regard
$Z_{\phi_{\lambda}^{\C}}$ as defining  the  measure
\begin{equation} \langle |Z_{\phi_{\lambda}^{\C}}| , \phi \rangle
= \int_{Z_{\phi_{\lambda}^{\C}} } \phi \frac{\omega_g^{m-1}}{(m -
1)!},\;\;\; \phi \in C(B^*_{\epsilon} M).
\end{equation}
 We prefer to state results in terms of the
current $[Z_{\phi_{\lambda}^{\C}}]$ since it carries more
information.

\begin{theo}\label{ZERO}  Let $(M, g)$ be  real analytic, and let $\{\phi_{j_k}\}$ denote a quantum ergodic sequence
of eigenfunctions of its Laplacian $\Delta$.  Let
$(B^*_{\epsilon_0} M, J)$ be the maximal Grauert tube around $M$
with complex structure $J_g$ adapted to $g$. Let $\epsilon <
\epsilon_0$. Then:
$$\frac{1}{\lambda_{j_k}} [Z_{\phi_{j_k}^{\C}}] \to  \frac{i}{ \pi} \ddbar \sqrt{\rho}\;\;
\mbox{weakly in }\;\;  \dcal^{' (1,1)} (B^*_{\epsilon} M), $$ in
the sense that,   for any continuous test form $\psi \in \dcal^{
(m-1, m-1)}(B^*_{\epsilon} M)$, we have
$$\frac{1}{\lambda_{j_k}} \int_{Z_{\phi_{j_k}^{\C}}} \psi \to
 \frac{i}{ \pi} \int_{B^*_{\epsilon} M} \psi \wedge \ddbar
\sqrt{\rho}. $$ Equivalently, for any  $\phi \in C(B^*_{\epsilon}
M)$,
$$\frac{1}{\lambda_{j_k}} \int_{Z_{\phi_{j_k}^{\C}}} \phi \frac{\omega_g^{m-1}}{(m -
1)!}  \to
 \frac{i}{ \pi} \int_{B^*_{\epsilon} M} \phi  \ddbar
\sqrt{\rho}  \wedge \frac{\omega_g^{m-1}}{(m - 1)!} . $$
\end{theo}

\begin{cor}\label{ZEROCOR}  Let $(M, g)$ be a real analytic with ergodic  geodesic
flow.  Let $\{\phi_{j_k}\}$ denote a full density ergodic
sequence. Then for all $\epsilon < \epsilon_0$,
$$\frac{1}{\lambda_{j_k}} [Z_{\phi_{j_k}^{\C}}] \to  \frac{i}{ \pi} \ddbar \sqrt{\rho},\;\;
 \mbox{weakly in}\;\; \dcal^{' (1,1)} (B^*_{\epsilon} M). $$
\end{cor}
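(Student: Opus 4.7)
The plan is to obtain Corollary~\ref{ZEROCOR} as an immediate combination of Theorem~\ref{ZERO} with the quantum ergodicity theorem (Theorem~\ref{QE}). First, since $(M,g)$ is assumed to have ergodic geodesic flow, Theorem~\ref{QE}(1) produces a density-one subsequence $\scal \subset \N$ along which
$$
\langle A\phi_{j_k},\phi_{j_k}\rangle \longrightarrow \frac{1}{\mu(S^*M)}\int_{S^*M}\sigma_A\,d\mu
$$
for every $A\in\Psi^0(M)$. This is precisely the definition \eqref{QEDEF} of a quantum ergodic sequence, which is the hypothesis required by Theorem~\ref{ZERO}. Feeding this sequence into Theorem~\ref{ZERO} yields
$$
\frac{1}{\lambda_{j_k}}[Z_{\phi_{j_k}^{\C}}] \longrightarrow \frac{i}{\pi}\ddbar\sqrt{\rho} \quad \text{in } \dcal^{'(1,1)}(B^*_{\epsilon}M)
$$
for each $\epsilon<\epsilon_0$, which is the content of the corollary.

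Thus the corollary is a formally tautological consequence of the two stated results; the only verification is that the two notions of ``quantum ergodic sequence'' align, which they do by definition. There is no issue with the density-one qualifier in the corollary, because Theorem~\ref{ZERO} imposes no constraint on how the chosen sequence sits inside the full spectrum beyond the weak matrix-element condition above.

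The substantive work, already absorbed into Theorem~\ref{ZERO}, is the transfer from equidistribution of Wigner measures on $S^*M$ to equidistribution of complex nodal currents in the Grauert tube. Were one to reprove the chain from scratch, the main obstacle would be to identify $\frac{1}{\lambda_{j_k}}\log|\phi_{j_k}^{\C}(\zeta)|^2$ as a sequence of plurisubharmonic functions converging in $L^1_{loc}(B^*_\epsilon M)$ to $2\sqrt{\rho}$: upper bounds come from the FIO-type sup-norm estimate of Proposition~\ref{PW}, while matching lower bounds require the QE hypothesis applied to the damped complexified projector \eqref{CXDSPb}, together with the extremal PSH identity $\Phi_M=\sqrt{\rho}$ of \eqref{SICIAK}. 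Once this PSH $L^1_{loc}$ convergence is in hand, the Poincar\'e--Lelong formula in \eqref{ZDEF} together with continuity of $\ddbar$ in the weak topology on PSH functions converts it into the asserted weak convergence of nodal currents, with the correct normalization $\frac{i}{2\pi}\ddbar(2\sqrt{\rho})=\frac{i}{\pi}\ddbar\sqrt{\rho}$.
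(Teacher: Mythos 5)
Your proposal is correct and matches the paper's intent: the corollary is indeed just Theorem~\ref{ZERO} applied to the density-one quantum ergodic subsequence guaranteed by Theorem~\ref{QE} under the ergodicity hypothesis, and your sketch of the underlying mechanism (Poincar\'e--Lelong, $L^1_{loc}$ compactness of the normalized PSH functions $\frac{1}{\lambda}\log|\phi^{\C}_\lambda|$, and the quantum ergodic identification of the limit with $\sqrt{\rho}$) is the same four-step outline the paper gives for Theorem~\ref{ZERO}.
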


The proof consists of three ingredients:

\begin{enumerate}

\item By the Poincar\'e-Lelong formula, $[Z_{\phi_{\lambda}^{\C}}]
= i \ddbar \log |\phi_{\lambda}^{\C}|. $ This reduces the theorem
to determining the limit of $\frac{1}{\lambda} \log
|\phi_{\lambda}^{\C}|$.

\item $\frac{1}{\lambda} \log |\phi_{\lambda}^{\C}|$ is a sequence
of PSH functions which are uniformly bounded above by
$\sqrt{\rho}$. By a standard compactness theorem, the sequence is
pre-compact in $L^1$: every sequence from the family has an $L^1$
convergent subsequence.

\item $|\phi_{\lambda}^{\C}|^2$, when properly $L^2$ normalized on
each $\partial M_{\tau}$ is a quantum ergodic sequence on
$\partial M_{\tau}$. This property implies that the $L^2$ norm of
$|\phi_{\lambda}^{\C}|^2$
  on $\partial \Omega$ is asymtotically $\sqrt{\rho}$.

  \item Ergodicity and the calculation of the $L^2$ norm imply that  the only possible $L^1$ limit
of $\frac{1}{\lambda} \log |\phi_{\lambda}^{\C}|$. This concludes
the proof.

\end{enumerate}

We note that the first two steps are valid on any real analytic
$(M, g)$. The difference is that the $L^2$ norms of
$\phi_{\lambda}^{\C}$ may depend on the subsequence and can often
not equal $\sqrt{\rho}$. That is,  $\frac{1}{\lambda}
|\phi_{\lambda}^{\C}|$ behaves like the maximal PSH function in
the ergodic case, but not in general. For instance, on a flat
torus, the complex zero sets of ladders of eigenfunctions
concentrate on a real hypersurface in $M_{\C}$. This may be seen
from the complexified  real eigenfunctions $ \sin \langle k, x + i
\xi \rangle$, which vanish if and only if $\langle k, x \rangle
\in 2 \pi \Z$ and $\langle k, \xi \rangle = 0$. Here, $k \in \N^m$
is a lattice point. The exact limit distribution depends on which
ray or ladder of lattice points one takes in the limit.  The
result reflects the quantum integrability of the flat torus, and a
similar (but more complicated) description of the zeros exists in
all quantum integrable cases. The fact that $\frac{1}{\lambda}
\log |\phi_{\lambda}^{\C}|$ is pre-compact on a Grauert tube of
any real analytic Riemannian manifold  confirms the upper bound on
complex  nodal hypersurface volumes.

\section{\label{NODALGEOS} Intersections of nodal sets and gedoesics on real
analytic surfaces}

In \S \ref{PLANEDOMAIN}  we discussed upper bounds on the number of  intersection points of the nodal set with the bounary
of a real analytic plane domain and more general `good' analytic curves. In this section, we discuss work in progress
on intersections of nodal sets and geodesics on surfaces with ergodic geodesic flow.  Of course, the results are only
tentative but it seems worthwhile at this point in time to explain the role of ergodicity in obtaining lower bounds
and asymptotics.  We restrict to geodesic curves because they have rather special properties that makes the 
analysis somewhat different than for more general curves such as distance circles.  The dimensional restriction
is due to the fact that the results are partly based on the quantum ergodic restriction theorems of \cite{TZ2,TZ3},
which concern restrictions of eigenfunctions to hypersurfaces. Nodal sets and geodesics have complementary
dimensions and intersect in points, and therefore it makes sense to count the number of intersections. 

We fix $(x, \xi) \in S^* M$ and let 
\begin{equation} \label{GAMMAX} \gamma_{x, \xi}: \R \to M, \;\;\;\gamma_{x, \xi}(0) = x, \;\;
\gamma_{x, \xi}'(0) = \xi \in T_x M  \end{equation}  denote the corresponding parametrized geodesic. 
Our goal is to  determine the asymptotic  distribution of intersection points of $\gamma_{x, \xi}$ with the nodal 
set of a highly eigenfunction. As usual, we cannot cope with this problem in the real domain and therefore analytically
continue it to the complex domain. Thus, we consider the intersections 
$$\ncal^{\gamma_{x, \xi}^{\C}}_{\lambda_j} = Z_{\phi_{_j}^{\C}}  \cap \gamma_{x, \xi}^{\C} $$  of the complex nodal set  with the (image of the) complexification of a generic geodesic
 If  \begin{equation} \label{SEP} S_{\epsilon} = \{(t
+ i \tau \in \C: |\tau| \leq \epsilon\} \end{equation} then $\gamma_{x, \xi}$ admits an  analytic
continuation
\begin{equation} \label{gammaXCX} \gamma_{x, \xi}^{\C}: S_{\epsilon} \to M_{\epsilon}.  \end{equation}
In other words, we consider the zeros of the pullback, 
$$\{\gamma_{x, \xi}^* \phi_{\lambda}^{\C} = 0\} \subset S_{\epsilon}. $$

 We encode the discrete 
set by the measure
\begin{equation} \label{NCALCURRENT} [\ncal^{\gamma_{x, \xi}^{\C}}_{\lambda_j}] = \sum_{(t + i \tau):\; \phi_j^{\C}(\gamma_{x, \xi}^{\C}(t + i \tau)) = 0} \delta_{t + i \tau}.
\end{equation}

We would like to show that for generic geodesics, the complex zeros on the complexified geodesic condense
on the real points and become uniformly distributed with respect to arc-length. This does not always occur:
as in our discussion of QER theorems, if 
$\gamma_{x, \xi}$ is the fixed point set of an isometric involution,  then ``odd" eigenfunctions under the 
involution will vanish on the geodesic. The additional hypothesis is that QER holds for $\gamma_{x, \xi}$, i.e.
that Theorem \ref{sctheorem} is valid. 
 The following
conjecture appears to be  proved  (\cite{Z3}), but to be conservative, we state it here only as a conjecture:

\begin{conj}\label{MAINCOR} Let $(M^2, g)$ be a real analytic Riemannian surface  with ergodic
geodesic flow. Let $\gamma_{x, \xi}$ satisfy the QER hypothesis.   Then there
exists a subsequence of eigenvalues $\lambda_{j_k}$ of density one
such that for any $f \in C_c(S_{\epsilon})$,
$$\lim_{k \to \infty} \sum_{(t + i \tau):\; \phi_j^{\C}(\gamma_{x, \xi}^{\C}(t + i \tau)) = 0} f(t + i
\tau ) = \int_{\R} f(t) dt.  $$
\end{conj}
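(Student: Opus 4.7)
The plan is to reduce the problem to the PSH/Poincar\'e--Lelong framework of Theorem \ref{ZERO}, but now pulled back under the complexified geodesic $\gamma_{x,\xi}^{\C}: S_{\epsilon}\to M_\epsilon$. Set
\begin{equation*}
U_\lambda(z) \defeq \phi_\lambda^{\C}\bigl(\gamma_{x,\xi}^{\C}(z)\bigr), \qquad z=t+i\tau\in S_\epsilon,
\end{equation*}
which is a one-variable holomorphic function on the strip. By Poincar\'e--Lelong,
\begin{equation*}
\frac{1}{\lambda}[\ncal^{\gamma_{x,\xi}^{\C}}_{\lambda}] \;=\; \frac{i}{\pi}\,\ddbar\!\left(\tfrac{1}{\lambda}\log|U_\lambda|\right),
\end{equation*}
so it suffices to show that $u_\lambda\defeq \tfrac{1}{\lambda}\log|U_\lambda|$ converges in $L^1_{\mathrm{loc}}(S_\epsilon)$ to $|\tau|=|\Im z|$ along a density-one subsequence; indeed $\tfrac{i}{\pi}\ddbar|\tau|$ is (up to the harmless normalization constant arising from Poincar\'e--Lelong in one complex variable) Lebesgue measure on the real axis, which is the stated limit.

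First I would establish the uniform upper bound. By Proposition \ref{PW}, $|\phi_\lambda^{\C}(\zeta)|\le C\lambda^{(m+1)/2} e^{\lambda\sqrt{\rho}(\zeta)}$. The crucial geometric fact from the theory of adapted complex structures is that along a complexified geodesic, $\sqrt{\rho}(\gamma^{\C}_{x,\xi}(t+i\tau))=|\tau|$ (the leaves of the Monge--Amp\`ere foliation are precisely the complexified geodesics, on which $\sqrt{\rho}$ pulls back to $|\mathrm{Im}\,z|$). Hence $u_\lambda(z)\le|\tau|+o(1)$ uniformly on compact subsets of $S_\epsilon$. This makes $\{u_\lambda\}$ a family of subharmonic functions on $S_\epsilon$ that is uniformly bounded above on compacta, and therefore precompact in $L^1_{\mathrm{loc}}$ by standard PSH compactness (Hartogs). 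Every $L^1$-limit $u_\infty$ is either $-\infty$ or subharmonic with $u_\infty\le|\tau|$.

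The heart of the argument is to rule out strict inequality, and this is where the QER hypothesis on $\gamma_{x,\xi}$ enters. For each fixed $\tau_0$ with $0<\tau_0<\epsilon$ and each $T>0$, consider
\begin{equation*}
I_\lambda(\tau_0,T) \defeq \int_{-T}^{T}|U_\lambda(t+i\tau_0)|^2\,dt .
\end{equation*}
Write $\phi_\lambda^{\C}=U_{\C}(i\tau_0)\,e^{\tau_0\sqrt{\Delta}}\phi_\lambda$ via \eqref{ATAU}, so that restriction to the level $\tau=\tau_0$ of $\gamma^{\C}$ factors as the composition of the complex FIO $\Pi_{\tau_0}\circ U(i\tau_0)$ (Theorem \ref{BDM}) with ordinary restriction to the real geodesic $\gamma$, multiplied by the dilation $e^{\tau_0\lambda}$ on the eigenspace. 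Up to an $h$-pseudodifferential factor on $\gamma$ whose principal symbol is nonvanishing and explicitly computable from the canonical relation $\Gamma$, we have
\begin{equation*}
e^{-2\tau_0\lambda_j}\,I_{\lambda_j}(\tau_0,T) \;=\; \langle Op_{h_j}(a_{\tau_0,T})\,\gamma^*\phi_{\lambda_j},\,\gamma^*\phi_{\lambda_j}\rangle_{L^2(\gamma)}+o(1),
\end{equation*}
for an explicit nonnegative semiclassical symbol $a_{\tau_0,T}$ on $T^*\gamma$ with positive Liouville average. Applying the semiclassical QER theorem (Theorem \ref{sctheorem}) along a density-one subsequence $j_k$ then yields
\begin{equation*}
\lim_{k\to\infty}\frac{1}{\lambda_{j_k}}\log I_{\lambda_{j_k}}(\tau_0,T) \;=\; 2\tau_0 .
\end{equation*}

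Finally I combine this lower bound with the upper bound $u_\lambda\le|\tau|+o(1)$. Jensen's inequality gives
\begin{equation*}
\frac{2}{\lambda_{j_k}}\,\log\left(\tfrac{1}{2T}\int_{-T}^{T}e^{\lambda_{j_k}u_{\lambda_{j_k}}(t+i\tau_0)}\,dt\right) \;\ge\; \frac{1}{T}\int_{-T}^{T}u_{\lambda_{j_k}}(t+i\tau_0)\,dt\ \text{(reversed by Jensen applied with}\ e^{2\lambda u}\text{)},
\end{equation*}
and the QER asymptotic together with the upper bound force any $L^1$-limit $u_\infty$ to satisfy
\begin{equation*}
\frac{1}{2T}\int_{-T}^{T}u_\infty(t+i\tau_0)\,dt = |\tau_0|
\end{equation*}
for every $\tau_0$ and $T$, which combined with $u_\infty\le|\tau|$ forces $u_\infty=|\tau|$ a.e.\ on $S_\epsilon$. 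Applying $\tfrac{i}{\pi}\ddbar$ and using $\ddbar|\tau|=i\,\delta_{\{\tau=0\}}\,dt\wedge d\tau$ then gives the conjectured limit measure.

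The main obstacle is the QER step: one must package the composition ``complexified Poisson $\circ$ restriction to $\gamma$'' as a genuine semiclassical operator on $\gamma$ whose principal symbol is real and satisfies the asymmetry hypothesis of Theorem \ref{sctheorem}, so that the QER theorem applies to the sequence $\gamma^*\phi_{\lambda_j}$. The asymmetry condition on $\gamma_{x,\xi}$ is precisely what is meant by the QER hypothesis in the statement, and verifying that this gives a density-one subsequence \emph{uniform in} $\tau_0$ and $T$ (so that the $L^1_{\mathrm{loc}}$ convergence holds without extracting a further diagonal subsequence for each $\tau_0$) requires a countable-dense reduction and the monotonicity of $\tau_0\mapsto\int u_{\lambda}(\cdot+i\tau_0)dt$ afforded by subharmonicity.
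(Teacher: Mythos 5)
Your approach shares the paper's overall architecture (Poincar\'e--Lelong reduction, upper bound via $\sqrt{\rho}\circ\gamma^{\C}_{x,\xi}=|\tau|$ and Proposition \ref{PW}, PSH compactness, and QER applied to $e^{-2\tau_0\lambda}\int_{-T}^{T}|U_\lambda(t+i\tau_0)|^2\,dt$ to deduce the growth rate of the $L^2$ norm along each horizontal line, which is essentially Lemma~\ref{L2NORMintro}). Up to that point you have reconstructed the first two of the paper's three ``new principles.''

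The gap is in the final step, where you invoke Jensen's inequality to pass from
$\frac{1}{\lambda}\log I_\lambda(\tau_0,T)\to 2\tau_0$ to $\frac{1}{2T}\int_{-T}^{T}u_\infty(t+i\tau_0)\,dt=\tau_0$. Jensen applied to $e^{2\lambda u_\lambda}$ gives
$\frac{1}{2\lambda}\log\bigl(\frac{1}{2T}I_\lambda\bigr)\ge\frac{1}{2T}\int_{-T}^{T}u_\lambda\,dt$,
i.e.\ the \emph{same direction} as the upper bound $u_\lambda\le\tau_0+o(1)$; it provides no lower bound on $\int u_\lambda$. And in fact no elementary reverse-Jensen device can close this: if $u_\lambda$ equalled $\tau_0$ on an interval of length $O(1/\lambda)$ and were very negative elsewhere, one would still have $\frac{1}{\lambda}\log I_\lambda\to 2\tau_0$, yet $\int u_\lambda$ would not converge to $\tau_0$ and the limit current would not be $\delta_{\tau=0}\,dt$. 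In other words, your argument cannot rule out that the $L^2$ mass on each horizontal line concentrates on a sparse set. The paper's Proposition~\ref{LL} (``Lebesgue limits'') is precisely the missing ingredient: it uses invariance of the Wigner measures under the geodesic flow to show that the normalized restrictions $\gamma_{x,\xi}^{\tau*}\phi_j^{\C}/\|\gamma_{x,\xi}^{\tau*}\phi_j^{\C}\|_{L^2}$ are QUE with Lebesgue limit, i.e.\ the $L^2$ mass equidistributes rather than concentrating. Without a substitute for this equidistribution step, the passage from $L^2$-norm asymptotics to $L^1_{\rm loc}$ convergence of $u_\lambda\to|\tau|$ does not follow, and the conjectured limit is not reached.

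A secondary remark: your proposed identification of $e^{-2\tau_0\lambda}I_\lambda(\tau_0,T)$ with a semiclassical matrix element $\langle Op_{h}(a_{\tau_0,T})\gamma^*\phi_\lambda,\gamma^*\phi_\lambda\rangle$ via the FIO $\Pi_{\tau_0}\circ U(i\tau_0)$ is plausible but needs to confront the fact that restriction to a one-dimensional curve is not a standard composition of FIOs with $\Pi_{\tau_0}\circ U(i\tau_0)$. The paper sidesteps this in the periodic case by working directly with orbital Fourier series and the decoupling of Fourier modes (Lemma~\ref{FCSAT}), which is more concrete and also exposes exactly where QER along $\gamma_{x,\xi}$ is used.
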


In other words,
$$\mbox{weak}^* \lim_{k \to \infty} \frac{i}{\pi \lambda_{j_k}} [\ncal^{\gamma_{x, \xi}^{\C}}_{\lambda_j}] =
 \delta_{\tau = 0}, $$ in the sense of  weak* convergence on
$C_c(S_{\epsilon})$. Thus, the complex nodal set intersects the
(parametrized) complexified geodesic in a discrete set  which is
asymptotically (as $\lambda \to \infty$) concentrated along the
real geodesic  with respect to its arclength.

This concentration- equidistribution result is a `restricted'
version of the result of \S \ref{ERGODICNODAL}. As noted there, the limit distribution of
complex nodal sets in the ergodic case is a singular current $dd^c \sqrt{\rho}$. The motivation
for restricting to geodesics is that restriction magnifies the singularity of this current.  In the
case of a geodesic, the singularity is magnified to a delta-function; for other curves there is additionally
a smooth background measure.

The assumption of ergodicity is crucial. For instance, in the 
case of a flat torus, say $\R^2/L$ where $L \subset \R^2$ is a generic
lattice, the real eigenfunctions are
$\cos \langle \lambda, x \rangle, \sin \langle \lambda,x \rangle$
where $\lambda \in L^*$, the dual lattice, with eigenvalue $-
|\lambda|^2$.  Consider  a geodesic
$\gamma_{x,\xi}(t) = x + t \xi$. Due to the flatness, the
restriction $\sin \langle \lambda, x_0 + t \xi_0 \rangle$  of the
eigenfunction to a geodesic is an eigenfunction of the Laplacian
$-\frac{d^2}{dt^2}$ of  submanifold metric along the geodesic with
eigenvalue $- \langle \lambda, \xi_0 \rangle^2$. The
complexification of the restricted eigenfunction is $\sin \langle
\lambda,  x_0 + (t + i \tau) \xi_0 \rangle |$ and its exponent of
its growth is $\tau |\langle \frac{\lambda}{|\lambda|} , \xi_0
\rangle|$, which can have a wide range of values as the eigenvalue
moves along different rays in $L^*$. The limit current is $i
\ddbar$ applied to the limit and thus also has many limits

The proof  involves several  new principles which played no role in the global
result of \S \ref{ERGODICNODAL}  and which are specific to geodesics. However, the first steps in the proof
are the same as in the global case. 
 By the Poincar\'e-Lelong formula, we may express the
current of summation over the intersection points in \eqref{NCALCURRENT} in the form,
\begin{equation}\label{PLL}  [\ncal^{\gamma_{x, \xi}^{\C}}_{\lambda_j}] = i \ddbar_{t + i
\tau} \log \left| \gamma_{x, \xi}^* \phi_{\lambda_j}^{\C} (t + i \tau)
\right|^2. \end{equation}  
Thus, the main point of the proof is to determine the asymptotics of  $\frac{1}{\lambda_j} \log \left| \gamma_{x, \xi}^* \phi_{\lambda_j}^{\C} (t + i \tau)
\right|^2$.  When we freeze $\tau$ we put
\begin{equation} \label{gammatau} \gamma_{x, \xi}^{\tau} (t) = \gamma^{\C}_{x, \xi}(t + i \tau). \end{equation}

\begin{prop} \label{MAINPROPa} (Growth saturation) If $\{\phi_{j_k}\}$ satisfies QER along any arcs of $\gamma_{x, \xi}$, then in $L^1_{loc} (S_{\tau}),  $ we have   $$\lim_{k \to \infty} \frac{1}{\lambda_{j_k}} \log \left| \gamma_{x, \xi}^{\tau *} \phi_{\lambda_{j_k}}^{\C} (t + i \tau)
\right|^2 = |\tau|. $$

\end{prop}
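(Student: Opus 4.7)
The plan is to follow the same subharmonic compactness strategy used for Theorem~\ref{ZERO}, but carried out on the one-dimensional complex strip $S_\epsilon$ rather than on the full Grauert tube. Set
\[ u_k(t+i\tau) := \tfrac{1}{\lambda_{j_k}} \log |\phi_{\lambda_{j_k}}^{\C}(\gamma_{x,\xi}^{\C}(t+i\tau))|^2. \]
Because $\phi_{\lambda_{j_k}}^{\C} \circ \gamma_{x,\xi}^{\C}$ is holomorphic on $S_\epsilon$, each $u_k$ is subharmonic there. Proposition~\ref{PW} combined with the identity $\sqrt{\rho}(\gamma_{x,\xi}^{\C}(t+i\tau)) = |\tau|$ yields the pointwise upper bound $u_k(t+i\tau) \leq 2|\tau| + O(\lambda_{j_k}^{-1}\log \lambda_{j_k})$, so $\{u_k\}$ is uniformly bounded above on every compact subset of $S_\epsilon$. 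Hartogs' compactness theorem for subharmonic functions then forces either $u_k \to -\infty$ uniformly on compacta or a subsequential $L^1_{loc}(S_\epsilon)$-limit $u$, subharmonic, with $u(t+i\tau) \leq 2|\tau|$. The QER hypothesis along the real geodesic rules out the $-\infty$ alternative, since it forces $\int_I |\phi_{\lambda_{j_k}}(\gamma(t))|^2\, dt$ to stay bounded below by a positive constant on some arc $I \subset \R$.

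The core step is a matching lower bound on horizontal averages that saturates the upper bound. Localizing the real restriction $f_k(t) := \phi_{\lambda_{j_k}}(\gamma(t))$ in the frequency variable $\sigma$ conjugate to $t$, the analytic continuation to imaginary height $\tau$ amplifies frequency-$\sigma$ content by $e^{|\sigma \tau|}$, and almost-orthogonality on long arcs gives, schematically,
\[ \int_I |\phi_{\lambda_{j_k}}^{\C}(\gamma_{x,\xi}^{\C}(t+i\tau))|^2\, dt \;\sim\; \int_{-\lambda_{j_k}}^{\lambda_{j_k}} |\widehat{f_k}(\sigma)|^2\, \cosh(2\sigma\tau)\, d\sigma. \]
Applying Theorem~\ref{sctheorem} with semiclassical symbols that cut off a frequency shell $|\sigma|/\lambda_{j_k} \geq 1-\delta$ shows that a positive fraction of the mass of $f_k$ concentrates near the glancing set, because the Liouville density $\gamma_{B^*H}^{-1}$ blows up at $|\sigma| = \lambda_{j_k}$ while remaining integrable. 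This yields $\int_I |\phi_{\lambda_{j_k}}^{\C}|^2\, dt \gtrsim e^{(2-\delta)\lambda_{j_k}|\tau|}$ for every $\delta>0$, hence by Jensen's inequality $\int_I u_k(t+i\tau)\, dt \geq 2|\tau|\,|I| - o(1)$. Combined with the pointwise upper bound $u \leq 2|\tau|$ and the sub-mean-value property on small discs in $S_\epsilon$, this forces $u \equiv 2|\tau|$ almost everywhere (equivalently $|\tau|$ after reconciling the conventions of the statement), and uniqueness of the limit then promotes subsequential convergence to full $L^1_{loc}$ convergence.

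The step I expect to be the main obstacle is the frequency-localized QER input: Theorem~\ref{sctheorem} is stated for fixed semiclassical symbols, whereas here one must quantize cutoffs that concentrate at the singular locus $|\sigma| = \lambda_{j_k}$ of the limit Liouville measure. Handling this requires either an effective version of QER with an explicit remainder estimate, or a diagonal argument that lets $\delta\to 0$ sufficiently slowly with $k$. A secondary technical point is controlling the off-diagonal cross-terms in the bilinear expansion of $\int_I |\phi^{\C}|^2\, dt$ on arcs whose length grows with $\lambda_{j_k}$; the ergodicity of the first-return map to $\gamma_{x,\xi}$, encoded in the asymmetry hypothesis of Definition~\ref{ANC}, is what allows these oscillatory contributions to be averaged away by a non-stationary phase argument.
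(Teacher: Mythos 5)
Your overall architecture --- a subharmonic compactness argument, an upper bound from the global Grauert tube estimate, a saturation of the $L^2$ growth rate via a frequency-localized QER input, and then a promotion to $L^1_{\mathrm{loc}}$ convergence of the normalized log-modulus --- matches the paper's own strategy closely, and the $L^2$ lower bound step is essentially the same as Lemma~\ref{FCSAT}: the QER theorem with a symbol cut off to a shell $|\sigma|/\lambda_{j_k}\geq 1-\delta$ forces a positive fraction of mass near the glancing set, and the analytic continuation amplifies those modes by $e^{2(1-\delta)\lambda_{j_k}|\tau|}$.

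The gap is in the promotion step. You write that $\int_I |\phi^{\C}_{\lambda_{j_k}}|^2\,dt \gtrsim e^{(2-\delta)\lambda_{j_k}|\tau|}$ ``hence by Jensen's inequality $\int_I u_k\,dt \geq 2|\tau||I| - o(1)$.'' Jensen runs in the opposite direction: for the probability measure $dt/|I|$ and the concave function $\log$, one has
\[
\frac{1}{|I|}\int_I \log\big|\phi^{\C}_{\lambda_{j_k}}\big|^2\,dt \;\leq\; \log\Big(\frac{1}{|I|}\int_I \big|\phi^{\C}_{\lambda_{j_k}}\big|^2\,dt\Big),
\]
so the $L^2$-growth only reproduces the upper bound on the average of $u_k$, not a lower bound. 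This is not a cosmetic slip: a holomorphic function on the strip can attain the $L^2$-norm while having deep logarithmic valleys (for instance near its zeros), so $\int_I u_k$ can in principle be far below $2|\tau||I|$ even when $\|\phi^{\C}\|_{L^2(I)}^2$ saturates. Subharmonicity plus the pointwise upper bound $u\leq 2|\tau|$ cannot by themselves exclude this concentration.

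What closes the gap in the paper is Proposition~\ref{LL} (``Lebesgue limits''): using invariance of Wigner measures under the geodesic flow, the $L^2$-normalized complexified restrictions $U_j^{x,\xi,\tau} = \gamma_{x,\xi}^{\tau*}\phi_j^{\C}/\|\gamma_{x,\xi}^{\tau*}\phi_j^{\C}\|_{L^2}$ are shown to be QUE with Lebesgue limit measure along the complexified geodesic for each fixed $\tau>0$. That equidistribution is precisely the input that prevents the mass of $|\phi^{\C}|^2$ from clumping, and it is what turns the $L^2$-norm lower bound of Lemma~\ref{L2NORMintro} into the $L^1_{\mathrm{loc}}$ statement of the proposition. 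Your proposal has the correct skeleton (including identifying Lemma~\ref{FCSAT}'s role and the cutoff subtlety near the glancing set), but the Jensen step needs to be replaced by an argument of Proposition~\ref{LL} type, i.e.\ an equidistribution statement for the restrictions at each fixed positive $\tau$, not merely a norm estimate.
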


Proposition \ref{MAINPROPa} immediately implies Theorem \ref{MAINCOR} since we can apply $\ddbar$ to the
$L^1$ convergent sequence $\frac{1}{\lambda_{j_k}} \log \left| \gamma_{x, \xi}^* \phi_{\lambda_{j_k}}^{\C} (t + i \tau)
\right|^2 $ to obtain $\ddbar |\tau|$.

 The upper bound in Proposition \ref{MAINPROPa} follows immediately from the known global estimate 
$$\lim_{k \to \infty} \frac{1}{\lambda_j} \log |\phi_{j_k}(\gamma_{x, \xi}^{\C}(\zeta)| \leq |\tau|$$
on all of $\partial M_{\tau}$.
Hence the difficult point is to prove   that this growth rate is actually obtained
upon restriction to $\gamma_{x, \xi}^{\C}$.  This requires new kinds of arguments related to the QER
theorem.

\begin{itemize}

\item Complexifications of restrictions of eigenfunctions to geodesics have incommensurate Fourier modes,
i.e. higher modes are exponentially larger than lower modes.

\item The quantum ergodic restriction theorem in the real domain  shows that the Fourier
coefficients of the top allowed modes are `large' (i.e. as large as the lower modes).  Consequently, the $L^2$
norms of the complexified eigenfunctions along arcs of $\gamma_{x, \xi}^{\C}$ achieve the lower bound
of Proposition \ref{MAINPROPa}. 

\item Invariance of Wigner measures along the geodesic flow implies that the Wigner measures of restrictions
of complexified eigenfunctions to complexified geodesics should tend to constant multiples of Lebesgue measures
$dt$ for each $\tau > 0$. Hence the eigenfunctions everywhere on $\gamma_{x, \xi}^{\C}$ achieve the growth
rate of the $L^2$ norms.

\end{itemize}

These principles are most easily understood in the case of periodic geodesics. We let
$\gamma_{x, \xi}: S^1 \to M$ parametrize the geodesic with arc-length (where $S^1 = \R/ L \Z$ where
$L$ is the length of $\gamma_{x, \xi}$). 

 First, we use Theorem
\ref{sctheorem} to prove

\begin{lem} \label{L2NORMintro} Assume that $\{\phi_j\}$ satsifies QER along the
periodic geodesic $\gamma_{x, \xi}$. Let $||\gamma_{x, \xi}^{\tau*} \phi_j^{\C}||^2_{L^2(S^1)}$ be the $L^2$-norm
of the complexified restriction of $\phi_j$ along $\gamma_{x, \xi}^{\tau}$. Then,
$$\lim_{\lambda_j \to \infty} \frac{1}{\lambda_j} \log ||\gamma_{x, \xi}^{\tau*} \phi_j^{\C}||^2_{L^2(S^1)}
= |\tau| .$$
\end{lem}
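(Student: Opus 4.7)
The plan is to reduce the lemma to a Fourier-analytic statement on the periodic orbit and then apply the semiclassical QER theorem (Theorem \ref{sctheorem}) to $\gamma = \gamma_{x,\xi}$. Parametrizing $\gamma$ by arc-length so that $S^1 = \R/L\Z$ with $L$ the primitive period, I expand the real restriction as $u_j(t) := \phi_j(\gamma(t)) = \sum_n c_n^{(j)} e^{2\pi i n t/L}$. Since $u_j$ is real analytic, it extends holomorphically to the strip $S_\epsilon$ of \eqref{SEP}, and a direct computation gives $\gamma^{\tau*}\phi_j^{\C}(t) = \sum_n c_n^{(j)} e^{-2\pi n\tau/L}\, e^{2\pi i n t/L}$. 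By Parseval,
$$\|\gamma^{\tau*}\phi_j^{\C}\|^2_{L^2(S^1)} \;=\; L\sum_{n\in\Z}|c_n^{(j)}|^2\, e^{-4\pi n\tau/L} \;=\; L\int_{\R} e^{-2\tau\lambda_j s}\, d\mu_j(s),$$
where $\mu_j := \sum_n |c_n^{(j)}|^2\, \delta_{s_n^{(j)}}$ and $s_n^{(j)} := 2\pi n/(L\lambda_j)$ is the semiclassical tangential frequency.

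The upper bound follows immediately from Proposition \ref{PW}: the pointwise estimate $|\phi_j^{\C}(\zeta)| \le C\lambda_j^{(m+1)/2} e^{\sqrt{\rho}(\zeta)\lambda_j}$ together with $\sqrt{\rho}(\gamma^{\C}(t+i\tau)) = |\tau|$ gives an upper exponential rate of the correct order. The content of the lemma is the matching lower bound. For this I would apply Theorem \ref{sctheorem} with $H = \gamma$. Fix $\tau > 0$ (the case $\tau < 0$ is symmetric) and, for $\delta > 0$, choose a nonnegative symbol $a_\delta \in C_c^\infty(T^*S^1)$ supported in $\{-1+\delta \le \sigma \le -1+2\delta\}$ with $\omega(a_\delta) > 0$. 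Along a density-one subsequence, Theorem \ref{sctheorem} yields
$$\langle \Op_{h_j}(a_\delta)\, u_j,\, u_j\rangle_{L^2(S^1)} \longrightarrow \omega(a_\delta) > 0.$$
Since $\Op_{h_j}(a_\delta)$ is, to leading order, a Fourier multiplier selecting modes $n$ with $s_n^{(j)}\in\mathrm{supp}(a_\delta)$, this produces $c_\delta>0$ with $\sum_{n:\, s_n^{(j)}\in[-1+\delta,\,-1+2\delta]}|c_n^{(j)}|^2 \ge c_\delta$ for all sufficiently large $j$ in the subsequence. Substituting into the Parseval identity gives $\|\gamma^{\tau*}\phi_j^{\C}\|^2_{L^2(S^1)} \ge L c_\delta\, e^{2\tau(1-2\delta)\lambda_j}$, and letting $\delta\to 0$ produces the matching lower exponential rate.

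The main obstacle lies at the glancing set $|\sigma|=1$, where the exponential rate of the Parseval identity is in fact decided. This is exactly where the QER density $(1-\sigma^2)^{-1/2}$ is singular and the tangential semiclassical calculus degenerates: a priori most of the $L^2$-mass of $u_j$ could concentrate away from $|\sigma|=1$ on a given subsequence, in which case the lower rate would be strictly smaller than the upper one. The key technical point is to show that a nontrivial fraction of the mass of $\mu_j$ persists arbitrarily near $|\sigma|=1$ uniformly as $\delta\to 0$, rather than escaping into the classically forbidden region $|\sigma|>1$. A robust way to handle this, avoiding the degeneracy of the tangential quantization at glancing, is to pass from Theorem \ref{sctheorem} to the Cauchy-data QER theorem of \cite{CTZ}, which is uniformly valid up to the glancing hypersurface and supplies the required boundary control by trading tangential for normal frequency information. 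This is the only step where ergodicity of the geodesic flow (rather than mere real analyticity) is genuinely used.
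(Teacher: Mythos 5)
Your proposal reproduces the paper's strategy almost exactly: parametrize the periodic geodesic by arc-length, Fourier-expand the restriction, pass to the complexification where the $n$-th mode picks up the weight $e^{-2\pi n\tau/L}$, apply Parseval, get the upper bound from Proposition \ref{PW} together with $\sqrt{\rho}\circ\gamma^{\C}(t+i\tau)=|\tau|$ (plus energy localization to truncate $|n|\lesssim\lambda_j$), and obtain the lower bound by using the QER theorem to show that a definite fraction of the restricted $L^2$-mass sits in Fourier modes with $|n|$ close to $\lambda_j$. The paper isolates exactly that last input as Lemma \ref{FCSAT}, and your derivation of it from Theorem \ref{sctheorem} with a symbol $a_\delta$ supported in a shell $\{1-2\delta\le|\sigma|\le1-\delta\}$ is the intended argument.

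The only place you part company with the paper is your closing paragraph, and there I think you have talked yourself into a nonexistent difficulty. You worry that one needs uniformity as $\delta\to 0$ up to the glancing set $|\sigma|=1$, and that this forces you to replace the Dirichlet-data QER theorem by the Cauchy-data version of \cite{CTZ}. But no such uniformity is required. For the lower bound it suffices to show, for each \emph{fixed} $\delta>0$, that
\[
\liminf_{j\to\infty}\frac{1}{\lambda_j}\log\|\gamma_{x,\xi}^{\tau*}\phi_j^{\C}\|^2_{L^2(S^1)}\;\ge\;2|\tau|(1-2\delta),
\]
which is exactly what $\langle Op_{h_j}(a_\delta)u_j,u_j\rangle\to\omega(a_\delta)>0$ delivers; one then lets $\delta\to0$ \emph{after} letting $j\to\infty$. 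Since $a_\delta$ is compactly supported inside the open ball bundle $B^*H$, the singularity of the limit density $\gamma_{B^*H}^{-1}$ at $|\sigma|=1$ never enters: $\omega(a_\delta)$ is a finite positive number. The only genuine technical point hidden here is that the density-one subsequence furnished by Theorem \ref{sctheorem} should be chosen independently of $\delta$; this is handled by the standard diagonal extraction over a countable dense family of symbols, exactly as in the proof of quantum ergodicity itself, not by passing to a stronger theorem. (The possibility that mass ``escapes to $|\sigma|>1$'' is also not a concern: that is the classically forbidden region, and energy localization in the paper's sense ensures the coefficients there are rapidly decaying.) So you can simply delete the CTZ detour; the Dirichlet QER hypothesis, which is what the paper's Lemma \ref{FCSAT} actually invokes, already closes the argument.

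A small bookkeeping remark: with the $L^2$-norm \emph{squared} on the left and the factor $e^{-2n\tau}$ in the Parseval identity, the exponential rate one actually obtains is $2|\tau|$ rather than $|\tau|$, which your computation also produces; the stated constant in the lemma should be read with that normalization in mind.
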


To prove Lemma \ref{L2NORMintro}, we study the      orbital Fourier series of $\gamma_{x, \xi}^{\tau*} \phi_j$
and of its complexification. The orbital Fourier coefficients are 
$$\nu_{\lambda_j}^{x, \xi}(n) = \frac{1}{L_{\gamma}} \int_0^{L_{\gamma}} \phi_{\lambda_j}(\gamma_{x, \xi}(t)) e^{- \frac{2 \pi i n t}{L_{\gamma}}} dt, $$
and the orbital Fourier series is 
\begin{equation} \label{PER} \phi_{\lambda_j}(\gamma_{x, \xi}(t) )= \sum_{n \in \Z}  \nu_{\lambda_j}^{x, \xi}(n)  e^{\frac{2 \pi i n t}{L_{\gamma}}}. 
\end{equation}
Hence the analytic continuation of $\gamma_{x, \xi}^{\tau*} \phi_j$  is given by 
\begin{equation} \label{ACPER} \phi^{\C}_{\lambda_j}(\gamma_{x, \xi}(t + i \tau) )= \sum_{n \in \Z}  \nu_{\lambda_j}^{x, \xi}(n)  e^{\frac{2 \pi i n (t + i \tau)}{L_{\gamma}}}. \end{equation}
By the Paley-Wiener theorem for Fourier series, the  series converges absolutely and uniformly for $|\tau| \leq \epsilon_0$. 
By ``energy localization" only the modes with $|n| \leq \lambda_j$ contribute substantially to the $L^2$ norm. We
then observe that the Fourier modes decouple, since they have different exponential growth rates. We use the
QER hypothesis in the following way:

\begin{lem} \label{FCSAT}  Suppose that $\{\phi_{\lambda_j}\}$ is QER along the periodic geodesic $\gamma_{x, \xi}$.
Then for all $\epsilon > 0$, there exists $C_{\epsilon} > 0$ so that
$$\sum_{n: |n| \geq (1 - \epsilon) \lambda_j}  |\nu_{\lambda_j}^{x, \xi}(n)|^2 \geq  C_{\epsilon}. $$

\end{lem}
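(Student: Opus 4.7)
The plan is to apply the semiclassical QER theorem (Theorem~\ref{sctheorem}) to a carefully chosen symbol on $T^*\gamma_{x,\xi}\cong T^*S^1$ that microlocalizes onto frequencies near the boundary $|\sigma|=1$ of the semiclassical unit ball $B^*S^1$. The crucial point is that a symbol depending only on $\sigma$ yields, under semiclassical quantization, an exact Fourier multiplier on $S^1$, diagonal in the orbital Fourier basis. Consequently the QER matrix element becomes a weighted sum of $|\nu_{\lambda_j}^{x,\xi}(n)|^2$ whose weights are supported precisely on the high modes of interest.

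First I would fix $\epsilon>0$ and pick $\chi\in C_c^\infty(\R;[0,1])$ with $\chi\equiv 1$ on $[1-\tfrac{3\epsilon}{4},1-\tfrac{5\epsilon}{8}]$ and $\supp\chi\subset[1-\epsilon,1-\tfrac{\epsilon}{2}]$. Setting $a_\epsilon(t,\sigma):=\chi(|\sigma|)\in S^{0,0}(T^*S^1)$, the support of $a_\epsilon$ lies strictly inside $B^*S^1$, so the QER density $(1-|\sigma|^2)^{-1/2}$ is bounded on $\supp a_\epsilon$. With $h_j=1/\lambda_j$, the operator $\Op_{h_j}(a_\epsilon)$ acts diagonally on the orthonormal Fourier basis $e_n(t)=L_\gamma^{-1/2}\exp(2\pi int/L_\gamma)$ with eigenvalues $\chi(2\pi|n|/(\lambda_j L_\gamma))$. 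Using the expansion \eqref{PER} and orthogonality of the exponentials gives
\[
\bigl\langle \Op_{h_j}(a_\epsilon)\,\gamma_{x,\xi}^*\phi_{\lambda_j},\,\gamma_{x,\xi}^*\phi_{\lambda_j}\bigr\rangle_{L^2(S^1)} \;=\; L_\gamma\sum_{n\in\Z}\chi\!\left(\tfrac{2\pi|n|}{\lambda_j L_\gamma}\right)\bigl|\nu_{\lambda_j}^{x,\xi}(n)\bigr|^2.
\]

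By the QER hypothesis along $\gamma_{x,\xi}$, the left side converges as $j\to\infty$ to
\[
\omega(a_\epsilon)\;=\;\frac{2}{\vol(S^*M)}\int_{S^1}\!\int_{\R}\chi(|\sigma|)(1-|\sigma|^2)^{-1/2}\,d\sigma\,dt\;=:\;c_\epsilon\;>\;0,
\]
since the integrand is nonnegative and not identically zero on an open set. Because $0\le\chi\le 1$ is supported where $|n|\ge(1-\epsilon)\lambda_j L_\gamma/(2\pi)$, the right side of the displayed identity is bounded above by $L_\gamma\sum_{|n|\ge(1-\epsilon)\lambda_j L_\gamma/(2\pi)}|\nu_{\lambda_j}^{x,\xi}(n)|^2$, yielding a strictly positive lower bound $C_\epsilon$ for this tail sum once $j$ is sufficiently large. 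The threshold $(1-\epsilon)\lambda_j L_\gamma/(2\pi)$ differs from the stated $(1-\epsilon)\lambda_j$ only by a positive multiplicative constant depending on $L_\gamma$, which can be absorbed into the $\epsilon$-parameter without affecting the ``for all $\epsilon>0$'' conclusion.

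The main obstacle is handling the singular density $(1-|\sigma|^2)^{-1/2}$ appearing in the QER limit near the grazing boundary $|\sigma|=1$: one must localize close to this edge to capture the highest-frequency Fourier modes, yet the symbol class $S^{0,0}$ requires staying compactly supported inside the open ball. The cutoff $\chi$ confined to $[1-\epsilon,1-\epsilon/2]$ resolves this tension while still yielding a strictly positive QER mass $c_\epsilon$. A secondary, minor point is that identifying $\Op_{h_j}(a_\epsilon)$ with an exact Fourier multiplier depends on the $t$-independence of $a_\epsilon$; any standard quantization (Weyl or Kohn--Nirenberg) gives the same diagonal action on Fourier modes in this case, so the matrix element identity above is exact and not merely asymptotic in $h_j$.
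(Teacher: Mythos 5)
Your strategy is exactly the natural one and, as far as I can tell, the one the paper has in mind: test the QER limit against a symbol $a_\epsilon=\chi(|\sigma|)$ depending only on the fiber variable, so that semiclassical quantization along $\gamma_{x,\xi}$ is an exact Fourier multiplier, converting the matrix element into a weighted sum $L_\gamma\sum_n\chi\bigl(2\pi|n|/(\lambda_j L_\gamma)\bigr)|\nu_{\lambda_j}^{x,\xi}(n)|^2$. Keeping $\supp\chi\subset[1-\epsilon,1-\epsilon/2]$ avoids the grazing singularity of $(1-|\sigma|^2)^{-1/2}$ while still giving a strictly positive QER limit $\omega(a_\epsilon)>0$, and the tail bound follows since $0\le\chi\le 1$. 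Up to this point the argument is sound.

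The last paragraph, however, contains a real error: the discrepancy between your threshold $(1-\epsilon)\lambda_j L_\gamma/(2\pi)$ and the stated $(1-\epsilon)\lambda_j$ is \emph{not} a matter of ``absorbing a constant into $\epsilon$.'' Replacing $(1-\epsilon)$ by $(1-\epsilon)L_\gamma/(2\pi)$ is a rescaling of the entire threshold, not a reparametrization of $\epsilon$; in particular if $L_\gamma<2\pi$ then for all $\epsilon<1-L_\gamma/(2\pi)$ the stated cutoff $(1-\epsilon)\lambda_j$ lies \emph{above} the semiclassically allowed band $|n|\lesssim\lambda_j L_\gamma/(2\pi)$, so by energy localization the tail sum is $O(\lambda_j^{-\infty})$ and no uniform lower bound $C_\epsilon>0$ can hold. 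Your proof therefore establishes the correct, dimensionally consistent statement with the $L_\gamma/(2\pi)$ factor, but you should say plainly that the printed form of the lemma is missing that factor (or is implicitly normalizing $L_\gamma=2\pi$), rather than suggesting the two are equivalent for all $\epsilon>0$. One more small point worth stating explicitly: the QER limit only yields the bound for $j$ large along the density-one subsequence; this is fine for the lemma's intended use but should be noted.
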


 Lemma \ref{FCSAT}  implies Lemma \ref{L2NORMintro} since it implies that for any $\epsilon > 0$,
$$\sum_{n: |n| \geq (1 - \epsilon) \lambda_j}  |\nu_{\lambda_j}^{x, \xi}(n)|^2 e^{-2 n \tau}  \geq  C_{\epsilon} e^{2\tau(1 - 
\epsilon) \lambda_j}. $$

To go from asymptotics of $L^2$ norms of restrictions to Proposition \ref{MAINPROPa} we then
use the third principle:

\begin{prop} \label{LL} (Lebesgue limits)  If
 $\gamma_{x, \xi}^* \phi_j \not=  0$ (identically),  then for all $\tau > 0$ the
 sequence 
$$U_j^{x, \xi, \tau} = \frac{\gamma_{x, \xi}^{\tau *} \phi_j^{\C}}{||{\gamma_{x, \xi}^{\tau *} \phi_j^{\C}}||_{L^2(S^1)} }$$
  is QUE  with limit measure given by normalized  Lebesgue measure on $S^1$. \end{prop}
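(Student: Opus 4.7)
The plan is to reduce the claim to the statement that every weak-$*$ limit $\mu$ of the probability measures $|U_j^{x,\xi,\tau}|^2 \, dt$ on $S^1$ is translation-invariant, and then invoke the fact that the unique translation-invariant probability measure on $S^1$ is normalized Lebesgue measure $L_\gamma^{-1}dt$. First, using the orbital Fourier series \eqref{ACPER},
$$
\gamma_{x,\xi}^{\tau *}\phi_{\lambda_j}^{\C}(t) \;=\; \sum_{n\in\Z} \nu_{\lambda_j}^{x,\xi}(n)\, e^{-2\pi n\tau/L_\gamma}\, e^{2\pi i n t/L_\gamma},
$$
Paley--Wiener energy localization shows that modes with $|n| \geq (1+\epsilon)\lambda_j$ contribute negligibly, while Lemma \ref{FCSAT} combined with the exponential weight $e^{-2\pi n\tau/L_\gamma}$ forces the $L^2(S^1)$ mass for $\tau>0$ to be concentrated on modes near $n=-\lambda_j$. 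Consequently, as in Lemma \ref{L2NORMintro}, the denominator $\|\gamma_{x,\xi}^{\tau *}\phi_{\lambda_j}^{\C}\|_{L^2(S^1)}^2$ is comparable (up to subexponential factors) to $e^{4\pi\lambda_j\tau/L_\gamma}$ times a constant bounded away from $0$.

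Second, since $\|U_j^{\tau}\|_{L^2(S^1)} = 1$, the family $\{|U_j^\tau|^2\,dt\}$ lies in the weak-$*$ compact set of probability measures on $S^1$; pick any weak-$*$ limit $\mu$ along a subsequence. To show $\mu = L_\gamma^{-1}dt$ it is enough to verify that for every $k\neq 0$,
$$
\frac{1}{L_\gamma}\int_0^{L_\gamma} e^{-2\pi i k t/L_\gamma}\, |U_j^{\tau}(t)|^2\, dt \;=\; \frac{\sum_{n\in\Z} \nu_{\lambda_j}(n)\,\overline{\nu_{\lambda_j}(n+k)}\, e^{-2\pi(2n+k)\tau/L_\gamma}}{\sum_n |\nu_{\lambda_j}(n)|^2\, e^{-4\pi n\tau/L_\gamma}} \;\longrightarrow\; 0.
$$

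Third, I would establish this decay by exploiting the invariance of the real Wigner lifts of $\phi_{\lambda_j}$ under the geodesic flow $G^s$. The key geometric identity is that rotation on the parameter space intertwines with geodesic flow, $\gamma_{x,\xi}(t+s) = \gamma_{G^s(x,\xi)}(t)$, and this extends to the complexification as $\gamma_{x,\xi}^\tau(t+s) = \gamma_{G^s(x,\xi)}^\tau(t)$. Hence pre-composition of $U_j^\tau$ with rotation by $s$ on $S^1$ only relocates the basepoint of the underlying geodesic by $G^s$, while leaving the $L^2(S^1)$-normalization unchanged. Applying Egorov's theorem and the standard argument that semiclassical defect measures on $M$ are geodesic-flow invariant then shows that $\mu$ is invariant under the full rotation group of $S^1$, so $\mu$ must equal normalized Lebesgue measure. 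Since the limit is subsequence-independent, the whole sequence converges, yielding QUE.

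The main obstacle will be rigorously transporting the geodesic-flow invariance from the real microlocal lifts on $M$ (to which Egorov's theorem applies in the standard sense) to the complex Fourier integral operator $\gamma_{x,\xi}^{\tau *}$ at positive imaginary time $\tau > 0$. Equivalently, one must establish the almost-orthogonality cancellation
$$
\sum_{n} \nu_{\lambda_j}(n)\,\overline{\nu_{\lambda_j}(n+k)}\, e^{-2\pi(2n+k)\tau/L_\gamma} \;=\; o\!\left(\sum_{n} |\nu_{\lambda_j}(n)|^2\, e^{-4\pi n \tau/L_\gamma}\right)
$$
for each fixed $k\neq 0$. After factoring out $e^{4\pi\lambda_j\tau/L_\gamma}$ and reindexing $n = -\lambda_j + m$, this reduces to an almost-orthogonality estimate for the top-mode coefficients $\nu_{\lambda_j}(-\lambda_j + m)$, which should follow from the QER hypothesis (Lemma \ref{FCSAT} and Theorem \ref{sctheorem}) together with mean-ergodic cancellation of the test symbol $e^{-2\pi i k t/L_\gamma}$ along geodesic-flow orbits localized near the unit cotangent circle of $S^1$.
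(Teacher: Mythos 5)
The paper itself does not prove Proposition \ref{LL}; it states it as a ``principle'' and defers the proof to \cite{Z3}, offering only the heuristic that ``invariance of Wigner measures along the geodesic flow'' should force the Lebesgue limit. Your proposal captures that heuristic faithfully: the Fourier-series reduction is correct, the translation-invariance characterization of Lebesgue measure on $S^1$ is the right endgame, and after factoring out $e^{4\pi\lambda_j\tau/L_\gamma}$ and reindexing $n=-\lambda_j+m$ you correctly isolate the crux, namely that the nonzero Fourier coefficients of $|U_j^\tau|^2$ must tend to zero. One small point of bookkeeping: as written the proposition hypothesizes only $\gamma^*_{x,\xi}\phi_j\not\equiv 0$, but your argument (and Lemmas \ref{FCSAT} and \ref{L2NORMintro} that you invoke) actually presuppose the QER hypothesis of Conjecture \ref{MAINCOR}; without it the statement is false (e.g.\ take $\nu_j(-\lambda_j+m)=1$ for $0\le m\le K$ and $0$ otherwise, which yields a fixed non-Lebesgue trigonometric density). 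That hypothesis should be made explicit.

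The more substantive gap is in your last paragraph. The almost-orthogonality
\[
\sum_m \nu_j(-\lambda_j+m)\,\overline{\nu_j(-\lambda_j+m+k)}\,e^{-4\pi m\tau/L_\gamma} \;=\; o\Bigl(\sum_m |\nu_j(-\lambda_j+m)|^2\,e^{-4\pi m\tau/L_\gamma}\Bigr)
\]
does \emph{not} follow from Theorem \ref{sctheorem} together with ``mean-ergodic cancellation,'' because the weight $e^{-4\pi m\tau/L_\gamma}$ is, in the rescaled variable $\sigma=n/\lambda_j$, a window of width $O((\lambda_j\tau)^{-1})$ collapsing onto the boundary point $\sigma=-1$ of $B^*H$. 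This is a $\lambda_j$-dependent, degenerating multiplier, not a fixed symbol in $S^{0,0}(T^*H)$. QER gives the weak-$*$ limit of the restricted Wigner distributions tested against \emph{fixed} symbols, and the resulting limit density $\gamma_{B^*H}^{-1}(s,\sigma)\,ds\,d\sigma$ has a merely integrable singularity at $\sigma=\pm1$; it simply does not determine the correlations of the coefficients $\nu_j(-\lambda_j+m)$ inside a window whose $\sigma$-width shrinks to zero. Two QER sequences can have identical weak-$*$ restricted Wigner limits and yet completely different $\tau$-weighted ratios $S_k/S_0$. Your Egorov/flow-invariance idea does address the \emph{real}-domain defect measure of $\phi_j|_\gamma$, but that invariance is on $T^*\gamma$ in the bulk of $B^*H$, and transporting it through the boundary-degenerate weight at positive imaginary time $\tau>0$ is precisely the content that a genuine proof must supply (e.g.\ via the complex FIO/Toeplitz structure of $A(\tau)=U_\C(i\tau)e^{\tau\sqrt\Delta}$ or a Toeplitz-quantized QER on the Grauert tube). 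You flag this as ``the main obstacle,'' which is the right instinct, but the suggested closure via the real-domain QER theorem as a black box would not work.
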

The proof of Proposition \ref{MAINPROPa} is completed by combining Lemma \ref{L2NORMintro}  and
Proposition \ref{LL}.   Conjecture  \ref{MAINCOR} follows easily from Proposition \ref{MAINPROP}.

The proof for non-periodic geodesics is considerably more involved, since one cannot use Fourier analysis
in quite the same way.

\section{\label{RWONB} Nodal and critical sets of Riemannian random waves }

We mentioned above that Riemannian random waves provide a probabilistic model that is conjectured
to predict the behavior of eigenfunctions when the geodesic flow of $(M, g)$ is ergodic. In this section,
we define the model precisely as in \cite{Z4} (see also \cite{Nic} for a similar model) and survey some of the current results and conjectures.  We should emphasize that some 
of the rigorous results on zeros or critical points of Riemannian random waves, both in the real and complex domain,
are much simpler than  for individual eigenfuntions, and therefore do not provide much guidance on how to prove
results for an orthonormal basis of eigenfunctions. But the relative  simplicity of random waves and their  value as predictors  provide the motivation for  studying
random waves. And there are many hopelessly difficult problems on random waves as well, which we will
survey in this section. 

For expository simplicity we assume 
that  the geodesic flow $G^t$ of $(M, g)$  is of  one of the
following two types:

\begin{enumerate}

\item  {\it aperiodic:} The Liouville measure of the closed
 orbits of $G^t$, i.e. the set of vectors lying on closed geodesics,  is zero; or

\item  {\it periodic = Zoll:} $G^T = id$ for some  $T>0$;
henceforth $T$ denotes the minimal period.  The common Morse index
of the $T$-periodic geodesics will be denoted by $\beta$.

\end{enumerate}
In the real analytic case, $(M, g)$ is automatically one of these
two types, since a positive measure of closed geodesics implies
that all geodesics are closed.
The two-term Weyl laws counting eigenvalues of $\sqrt{\Delta}$ are
very different in these two cases.

\begin{enumerate}

\item  In the {\it aperiodic} case, Ivrii's two term Weyl law
states
$$N(\lambda ) = \#\{j:\lambda _j\leq \lambda \}=c_m \;
Vol(M, g) \; \lambda^m +o(\lambda ^{m-1})$$
 where $m=\dim M$ and where $c_m$ is a universal constant.

\item  In the {\it periodic} case,
 the spectrum of $\sqrt{\Delta}$ is a union of eigenvalue clusters $C_N$ of the form
$$C_N=\{(\frac{2\pi}{T})(N+\frac{\beta}{4}) +
 \mu_{Ni}, \; i=1\dots d_N\}$$
with $\mu_{Ni} = 0(N^{-1})$.   The number $d_N$ of eigenvalues in
$C_N$ is a polynomial of degree $m-1$.
\end{enumerate}

We refer to \cite{HoI-IV,Z4} for background and further discussion.

 To define Riemannian random waves, we partition the
spectrum of $\sqrt{\Delta_g}$ into certain  intervals $I_N$ of
width one  and denote by $\Pi_{I_N} $ the spectral projections for
$\sqrt{\Delta_g}$ corresponding to the interval $I_N$. The choice
of the intervals $I_N$ is rather arbitrary for aperiodic $(M, g)$
and as mentioned above we  assume $I_N = [N, N + 1]$. 
In the Zoll case, we center the intervals around the center points
$\frac{2\pi}{T} N + \frac{\beta}{4}$ of the $N$th cluster $C_N$.
 We call call such a choice of
intervals a cluster decomposition. We denote by $d_N$ the number
of eigenvalues
  in $I_N$ and  put $\hcal_N = \mbox{ran} \Pi_{I_N}$ (the range of
 $\Pi_{I_N}$).

We choose an orthonormal basis $\{\phi_{N j}\}_{j = 1}^{d_N}$ for
$\hcal_N$. For instance, on $S^2$ one can choose the real and
imaginary parts of the standard $Y^N_m$'s.  We endow the real vector space $\mathcal{H}_N$ with the
Gaussian probability measure
 $\gamma_N$ defined by
\begin{equation}\label{gaussian}\gamma_N(f)=
\left(\frac{d_N}{\pi }\right)^{d_{N}/2}e^ {-d_{N}|c|^2}dc\,,\qquad
f=\sum_{j=1}^{d_{\lambda}}c_j \phi_{N j}, \,\;\; d_{N} = \dim
\hcal_{N}.
\end{equation} Here,  $dc$
is $d_{N}$-dimensional real Lebesgue measure. The normalization is
chosen so that $\E_{\gamma_N}\; \langle f,f\rangle=1$, where
$\E_{\gamma_N}$ is the expected value with respect to $\gamma_N$.
Equivalently,  the $d_N$ real variables $\ c_j$
($j=1,\dots,d_{N}$) are independent identically distributed
(i.i.d.) random variables with mean 0 and variance
$\frac{1}{2d_{N}}$; i.e.,
$$\E_{\gamma_N} c_j = 0,\quad  \E_{\gamma_N} c_j c_k =
\frac{1}{2 d_N}\de_{jk}\,.$$ We note  that the Gaussian ensemble
is equivalent to picking $f_N \in \hcal_{N}$ at random from the
unit sphere in $\hcal_{N}$ with respect to the $L^2$ inner
product.

Depending on the choice of intervals, we obtain the following special ensembles:
\begin{itemize}

\item The asymptotically  {\it fixed frequency} ensemble
$\hcal_{I_{\lambda}}$,  where $I_{\lambda} = [\lambda, \lambda +
1]$ and where  $\hcal_{I_{\lambda}}$ is the vector space of linear
combinations
\begin{equation} \label{psi} f_{\lambda} = \sum_{j: \lambda_j
\in [\lambda, \lambda + 1]} c_j\;\; \phi_{\lambda_j},
\end{equation}  of eigenfunctions  with $\lambda_j$ (the
frequency) in an interval $[\lambda, \lambda + 1]$ of fixed width.
(Note that it is the square root of the eigenvalue of $\Delta$,
not the eigenvalue,  which is asymptotically fixed).

\item The {\it high frequency cut-off} ensembles $\hcal_{[0,
\lambda]}$ where the frequency is cut-off at $\lambda$:
\begin{equation} \label{psi2} f_{\lambda} = \sum_{j: \lambda_j
\leq \lambda } c_j\;\; \phi_{\lambda_j}.
\end{equation}

\item  The {\it cut-off Gaussian free field},
\begin{equation} \label{psi2a} f_{\lambda} = \sum_{j: \lambda_j
\leq \lambda } c_j\;\;\frac{ \phi_{\lambda_j}}{\lambda_j}.
\end{equation}

\end{itemize}

One  could use more general weights
$w(\lambda_j)$ on a Sobolev space of functions or distributions on
$M$. In the physics terminology, $w(\lambda_j)$ (or it square) is
referred to as the power spectrum.

The key reason why we can study the limit distribution of nodal sets in this 
ensemble is that the covariance kernel 

\begin{equation} \Pi_{I_N}(x, y) = \E_{\gamma_N} (f_N(x) f_N(y)) =
\sum_{j: \lambda_j \in I_N} \phi_{\lambda_j}(x)
\phi_{\lambda_j}(y),
\end{equation}
is  the spectral projections kernel for $\sqrt{\Delta}.$

\subsection{Equidistribution of nodal sets for almost all sequences of random waves}

The real zeros are straightforward to define. For each
$f_{\lambda} \in \hcal_{[0, \lambda]}$ or
 $ \hcal_{I_{\lambda}}$ we
 associated to
the zero set $Z_{f_{\lambda}} = \{x \in M: f_{\lambda} (x) = 0\}$
 the positive measure
\begin{equation} \label{LINSTAT} \langle |Z_{f_{\lambda}}|, \psi \rangle = \int_{Z_{f_{\lambda}}}
\psi d\hcal^{n - 1}, \end{equation} where $d\hcal^{m-1}$ is the
induced (Hausdorff) hypersurface measure.

The main result we review is the  limit law for random sequences of random real
Riemannian waves. By a random sequence,  we mean an element of the
product probability space
\begin{equation} \label{HINFTY} \hcal_{\infty} = \prod_{N = 1}^{\infty} \hcal_N, \;\;
\gamma_{\infty} = \prod_{N = 1}^N \gamma_N. \end{equation}

\begin{theo} \label{ZDSM} \cite{Z4}  Let $(M, g)$ be a compact Riemannian
manifold, and let  $\{f_{_N} \}$ be a random sequence in
(\ref{HINFTY}).  Then
$$\frac{1}{N}\sum_{n = 1}^N \frac{1}{\lambda_n}
|Z_{f_{n}}| \to dV_g \;\;\;\; \mbox{almost surely w.r.t.} \;
(\hcal_{\infty}, \gamma_{\infty}).
$$
\end{theo}

\subsection{Mean and variance}

 We first show  that the  normalized expected limit
distribution $\frac{1}{\lambda} \E |Z_{f_{\lambda}}|$ of zeros of
random Riemannian waves  tends to the volume form $dV_g$  as
$\lambda \to \infty$.  That is, we define the   `linear statistic', 
\begin{equation} \label{LINSTAT2} X_{\psi}^N (f_N) = \langle \psi, |Z_{f_N}| \rangle, \;\;\; \psi \in C(M) \end{equation} 
and then define
\begin{equation} \label{EZ} \langle \E_{\gamma_N} |Z_{f_N}|, \psi
\rangle = \E_{\gamma_N} X_{\psi}^N, \end{equation}

\begin{theo}\label{ZMEASURE} Let $(M, g)$ be a compact Riemannian
manifold,let $\hcal_{[0, \lambda]}$ be the cutoff ensemble  and
let $({\mathcal H}_{N}, \gamma_{N})$ be the ensemble of Riemannian
waves of asymptotically fixed frequency. Then in either ensemble:
\begin{enumerate}

\item For any $C^{\infty}$ $(M, g)$,  $\lim_{N \to \infty}
\frac{1}{N} {\bf E}_{\gamma_N} \langle |Z_{f_N}|, \psi \rangle =
\int_M \psi dV_g$.

\item  For a real analytic $(M, g)$, $Var(\frac{1}{N} X^N_{\psi} )
) \leq C. $

\end{enumerate}

\end{theo}

We restrict to real analytic metrics in (2) for the sake of
brevity. In that case, the  variance estimate follows easily from
Theorem \ref{DF}.

\subsection{\label{DEN} Density of real zeros}

The formula for the density of zeros of random elements of
$\hcal_{N}$ can be derived from the general Kac-Rice formula
\cite{BSZ1,BSZ2,Nic}:

\begin{equation}\label{d2} \E|Z_{f_N}| =K_1^N (z)dV_g \,,\quad
K_1^N(x)= \int D(0,\xi,x) ||\xi|| \; d\xi\,.
\end{equation}
Here, $D(q, \xi, x) dq d \xi$ is the joint probability distribution of the Gaussian
random variables $(\psi(x), \nabla \psi(x))$, i.e. the pushforward of the Gaussian
measure on $\hcal_{\lambda})$ under the map $\psi \to (\psi(x), \nabla \psi(x))$. 
Note that the factor $\det(\xi\xi^*) $ in \cite{BSZ1,BSZ2} equals $ ||\xi||^2$ in
the codimension one case. Indeed,  let  $df^*_x$ be the adjoint
map with respect to the inner product $g$  on  $T_x M$. Let $df_x
\circ df_x^*: \R \to \R$  be the composition. By $\det df_x \circ
df_x^*$ is meant the determinant with respect to the inner product
on $T_x M$; it clearly equals $|df|^2$ in the codimension one
case.

The formulae of \cite{BSZ1, BSZ2} (the `Kac-Rice' formulae) give
that
\begin{equation}\label{fg4}
D(0,\xi;z)=Z_{n}(z) D_{\La}(\xi;z),
\end{equation}
where
\begin{equation}\label{fg5}
D_{\La}(\xi;z)=\frac{1}{\pi^{m}\sqrt{\det\La}}\exp\left( -{\langle
\La^{-1}\xi,\xi\rangle}\right)
\end{equation}
is the Gaussian density with covariance matrix
\begin{equation}\label{fg6}
\La=C-B^*A^{-1}B =\left(C^{q}_{q'} - B_{q} A^{-1} B_{q'}\right),
\;\;(q = 1, \dots, m)
\end{equation}
and
\begin{equation}\label{fg7}
Z(x)=\frac{\sqrt{\det\Lambda}}{\pi\sqrt{\det\De}} =\frac{1}{\pi
\sqrt{  A}}\,.
\end{equation}

Here, 

\begin{eqnarray} \Delta = 
\Delta^{N}(x)&=&\left(
\begin{array}{cc}
A^{N} & B^{N}\\
B^{N *} & C^{N}
\end{array}\right)\,,\nonumber \\
\big( A^{N} \big) &=& \E\big ( X^2\big)=\frac{1}{d_N}\Pi_{I_N}(x,x)\,,\nonumber\\
\big( B^{N}\big)_q&=& \E\big( X\Xi_{q}\big)= \frac{1}{d_{N}}
\frac{\partial}{\partial y_q} \Pi_{I_N}(x,y)|_{x = y}\,,\nonumber\\
\big( C^{\lambda}\big)^{q}_{q'}&=& \E\big(  \Xi_{q} \Xi_{q'}\big)=
 \frac{1}{d_{N}}
\frac{\partial^2}{\partial x_q \partial y_{q'} }\Pi_{I_N}(x,y)|_{x = y}\,,\nonumber\\
&&  \quad q, q'=1,\dots,m\,.\nonumber
\end{eqnarray}

Making a simple change of variables in the integral (\ref{d2}), we
have

\begin{prop}\label{BSZ}  \cite{BSZ1} On a real Riemannian manifold of dimension $m$, the
density of zeros of a random Riemannian wave is

\begin{equation}\begin{array}{lll}\label{EVOL1}
K_1^N(x) & = &\frac{1 }{\pi^{m} (\sqrt{d_N^{-1} \;\;
\Pi_{I_N}(x,x)}} \int_{\R^m} || \Lambda^N (x)^{1/2} \xi||
\exp\left( - {\langle \xi,\xi\rangle}\right)d\xi,
  \end{array}
\end{equation}
where $\Lambda^N(x)$ is a symmetric form on $T_xM$. For the
asymptotically fixed freqency ensembles, it is  given by
$$\Lambda^N(x) = \frac{1}{d_N} \left(d_x \otimes d_y \Pi_{I_N}(x, y) |_{x = y} -
\frac{1}{\Pi_{I_N}(x, y)} d_x  \Pi_{I_N}(x, y) |_{x = y} \otimes
 d_y \Pi_{I_N}(x, y) |_{x = y} \right). $$
 In the cutoff ensemble the formula is the same except that
 $\Pi_{I_N}$ is replaced by $\Pi_{[0, N]}$.

\end{prop}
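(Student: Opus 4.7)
\medskip

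\noindent\textbf{Proof plan for Proposition \ref{BSZ}.}

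The plan is to start from the Kac--Rice formula \eqref{d2} and then reduce it to a single Gaussian integral by using the decomposition \eqref{fg4}--\eqref{fg7}, computing $A^N, B^N, C^N$ explicitly from the definition of the Gaussian measure $\gamma_N$, and finally performing a linear change of variables. Everything on the right-hand side is already in hand from the cited formulas, so the argument is bookkeeping rather than new analysis.

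First I would verify the three covariance identities. Writing $f_N = \sum_j c_j \phi_{\lambda_j}$ with i.i.d.\ Gaussian coefficients of variance $\frac{1}{2d_N}$, one computes directly
\[
\E\bigl(f_N(x) f_N(y)\bigr) = \tfrac{1}{d_N}\,\Pi_{I_N}(x,y),
\]
after absorbing the constant. Differentiating under the expectation in one or both variables and setting $x=y$ gives the stated formulas
\[
A^N(x) = \tfrac{1}{d_N}\Pi_{I_N}(x,x),\quad
B^N_q(x) = \tfrac{1}{d_N}\partial_{y_q}\Pi_{I_N}(x,y)\bigm|_{x=y},
\]
\[
C^N_{q q'}(x) = \tfrac{1}{d_N}\partial^2_{x_q y_{q'}}\Pi_{I_N}(x,y)\bigm|_{x=y}.
\]
These are the entries of the covariance matrix $\Delta^N(x)$ of the Gaussian vector $\bigl(f_N(x),\nabla f_N(x)\bigr)$ in Fermi-type coordinates at $x$.

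Next I would use the standard Gaussian conditioning identity to factor the joint density as in \eqref{fg4}: the marginal law of $f_N(x)$ is $\mathcal{N}(0,A^N)$, whose density at $0$ is $Z(x) = 1/(\pi\sqrt{A^N})$ (giving the prefactor $\frac{1}{\pi^m \sqrt{d_N^{-1}\Pi_{I_N}(x,x)}}$ after combining with the $\pi^{-(m-1)}$ absorbed from the Gaussian normalization below), and the conditional law of $\nabla f_N(x)$ given $f_N(x)=0$ is the Gaussian $D_{\Lambda}$ with covariance
\[
\Lambda^N(x) = C^N - (B^N)^{*}(A^N)^{-1} B^N,
\]
which expands to precisely the symmetric bilinear form on $T_x M$ stated in the proposition. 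Substituting into \eqref{d2} gives
\[
K_1^N(x) = Z(x)\int_{\R^m} \lVert\xi\rVert\; \frac{1}{\pi^{m/2}\sqrt{\det\Lambda^N}}\exp\bigl(-\langle (\Lambda^N)^{-1}\xi,\xi\rangle\bigr)\, d\xi.
\]
Finally, changing variables $\xi = (\Lambda^N)^{1/2}\eta$ (legitimate where $\Lambda^N$ is nondegenerate, which holds on a dense open subset by the nonvanishing of the local Weyl law term) cancels $\sqrt{\det\Lambda^N}$ against the Jacobian and turns the weight $\lVert\xi\rVert$ into $\lVert(\Lambda^N)^{1/2}\eta\rVert$, yielding the claimed formula. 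The same substitution works verbatim for the cutoff ensemble with $\Pi_{I_N}$ replaced by $\Pi_{[0,N]}$.

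The only real subtlety, and the place I would spend the most care, is justifying the Kac--Rice step \eqref{d2} itself and the nondegeneracy of $\Lambda^N(x)$ needed for the change of variables; both reduce to checking that the map $f_N\mapsto (f_N(x),\nabla f_N(x))$ is surjective onto $\R\times T_x^*M$, which follows because the reproducing kernel $\Pi_{I_N}(\cdot,y)$ and its first derivatives in $y$ span this target space as $y$ varies near $x$. Everything else is a direct consequence of the cited identities \eqref{fg4}--\eqref{fg7}.
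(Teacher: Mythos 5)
Your proposal follows the same route as the paper: start from the Kac--Rice formula \eqref{d2}, invoke the Gaussian decomposition \eqref{fg4}--\eqref{fg7} with the covariance data $A^N, B^N, C^N$, and finish with the change of variables $\xi = (\Lambda^N)^{1/2}\eta$; the paper simply cites the BSZ formulas and states ``making a simple change of variables,'' whereas you fill in the conditioning argument behind them, which is fine. The nondegeneracy remark at the end is a reasonable thing to flag but does not change the mechanism.
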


We then need the asymptotics of the matrix elements of $\Delta^N(x)$. They are simplest for the
round sphere, so we state them first in that case:

\begin{prop} \label{PINSM} Let $\Pi_N: L^2(S^m) \to \hcal_N$ be the orthogonal
projection. Then:

\begin{itemize}

\item (A) $\Pi_N(x,x) = \frac{1}{Vol(S^m)} d_N$;

\item (B) $d_x \Pi_N(x, y)|_{x = y} = d_y \Pi_N(x, y)|_{x = y} =
0$;

\item (C) $d_x \otimes d_y \Pi_N(x, y)|_{x = y} = \frac{1}{m
Vol(S^{m})} \lambda_N^2 d_N g_x.$

\end{itemize}

\end{prop}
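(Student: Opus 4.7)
The plan is to exploit the $SO(m+1)$-invariance of the projection $\Pi_N$ onto the isotypic component $\hcal_N$, together with the fact that, as an $SO(m+1)$-module, $\hcal_N$ is irreducible. The key structural fact is that $\Pi_N(gx,gy) = \Pi_N(x,y)$ for all $g \in SO(m+1)$, since $\hcal_N$ is an invariant subspace. In particular $\Pi_N(x,y)$ is a point-pair invariant depending only on the geodesic distance $d(x,y)$.

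For (A), transitivity of $SO(m+1)$ on $S^m$ makes the diagonal restriction $x \mapsto \Pi_N(x,x)$ constant. Integrating and using orthonormality, $\int_{S^m} \Pi_N(x,x)\, dV_g = \sum_j \|\phi_{Nj}\|_{L^2}^2 = d_N$, which yields (A). For (B), I would differentiate the constant function in (A) and use the symmetry $\Pi_N(x,y) = \Pi_N(y,x)$; equivalently, observe
\[
(d_x \Pi_N)(x,y)\big|_{y=x} = \sum_j d\phi_{Nj}(x)\,\phi_{Nj}(x) = \tfrac12\, d\bigl(\Pi_N(x,x)\bigr) = 0.
\]

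The interesting step is (C). The tensor
\[
T(x) := (d_x \otimes d_y \Pi_N)(x,y)\big|_{y=x} = \sum_j d\phi_{Nj}(x) \otimes d\phi_{Nj}(x)
\]
is a symmetric 2-tensor on $T_x^* S^m$ by construction, and is $SO(m+1)$-covariant. Fixing $x$, it is invariant under the stabilizer $SO(m)$, which acts irreducibly on $T_x S^m$ (for $m \geq 2$). By Schur's lemma applied to symmetric bilinear forms, the only $SO(m)$-invariant symmetric 2-tensor on $T_x S^m$ up to scale is $g_x$, so $T(x) = c(x)\, g_x$; and $SO(m+1)$-covariance forces $c(x)$ to be a constant $c$.

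To pin down $c$, I would take the $g$-trace: $\operatorname{tr}_g T(x) = \sum_j |\nabla \phi_{Nj}(x)|_g^2 = m\,c$. Integrating over $S^m$ and using integration by parts on each term,
\[
\int_{S^m} \sum_j |\nabla \phi_{Nj}|_g^2\, dV_g = \sum_j \int_{S^m} \phi_{Nj}(-\Delta_g)\phi_{Nj}\, dV_g = \lambda_N^2\, d_N,
\]
so $m\,c \cdot \mathrm{Vol}(S^m) = \lambda_N^2\, d_N$, giving $c = \lambda_N^2 d_N /(m\,\mathrm{Vol}(S^m))$ as claimed. The main conceptual point — and really the only step that is not a one-line consequence of homogeneity — is the irreducibility of the isotropy representation of $SO(m)$ on $T_x S^m$; everything else is bookkeeping.
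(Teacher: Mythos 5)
Your argument is correct and is essentially the same as the paper's (the paper defers the computation to \cite{Z4} but explicitly attributes its simplicity to ``invariance under rotations,'' which is exactly the mechanism you use). Your packaging of step (C) via Schur's lemma for the isotropy action of $SO(m)$ on $T_xS^m$, followed by the trace and integration by parts to fix the constant, is a clean and standard way to carry out that rotational-invariance argument.
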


We refer to \cite{Z4} for the calculation, which is quite simple because of the invariance under rotations. 
The expected density of random  nodal hypersurfaces is given as
follows

\begin{prop} \label{REALDENSITY} In the case of $S^m$,

\begin{equation}\begin{array}{lll}\label{EVOL}
K_1^N(x) & = & C_m  \lambda_N\sim C_m N ,  \end{array}
\end{equation} where $C_m = \frac{1}{\pi^{m}} \int_{\R^m} |\xi|
\exp\left( - {\langle\xi,\xi\rangle}\right) d\xi.$

\end{prop}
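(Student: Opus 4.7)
The plan is to substitute the explicit matrix elements from Proposition \ref{PINSM} directly into the Kac--Rice density formula \eqref{EVOL1} of Proposition \ref{BSZ} and extract the $\lambda_N$ dependence by a change of variables.

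First, by rotational invariance of $S^m$ all covariance quantities are independent of $x$, so it suffices to compute at a single point. Item (A) of Proposition \ref{PINSM} gives $A^N = 1/\mathrm{Vol}(S^m)$ and hence the normalization factor $\sqrt{d_N^{-1}\Pi_N(x,x)} = \mathrm{Vol}(S^m)^{-1/2}$.  The crucial simplification is item (B), which says $B^N = 0$; this collapses the conditional covariance $\Lambda^N = C^N - (B^N)^* (A^N)^{-1} B^N$ to $\Lambda^N = C^N$, so no Schur complement needs to be evaluated.  By item (C),
\begin{equation*}
\Lambda^N(x) \;=\; \frac{\lambda_N^2}{m\,\mathrm{Vol}(S^m)}\, g_x,
\end{equation*}
a scalar multiple of the metric tensor.

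Next, in $g_x$-orthonormal coordinates on $T_x S^m$ the operator $\Lambda^N(x)^{1/2}$ acts as multiplication by the scalar $\lambda_N/\sqrt{m\,\mathrm{Vol}(S^m)}$, so that $\|\Lambda^N(x)^{1/2}\xi\| = \lambda_N |\xi|/\sqrt{m\,\mathrm{Vol}(S^m)}$. Plugging into \eqref{EVOL1} the integral factors, yielding
\begin{equation*}
K_1^N(x) \;=\; \frac{\mathrm{Vol}(S^m)^{1/2}}{\pi^{m}} \cdot \frac{\lambda_N}{\sqrt{m\,\mathrm{Vol}(S^m)}} \int_{\R^m} |\xi|\, e^{-\langle \xi,\xi\rangle}\, d\xi \;=\; C_m \lambda_N
\end{equation*}
for a constant $C_m$ depending only on $m$ (up to the overall normalization factor $(m\,\mathrm{Vol}(S^m))^{-1/2}$ absorbed into $C_m$). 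Finally, the spherical eigenvalues satisfy $\lambda_N^2 = N(N+m-1)$, hence $\lambda_N \sim N$ as $N \to \infty$, giving the claimed asymptotic $K_1^N(x)\sim C_m N$.

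There is really no obstacle once Propositions \ref{BSZ} and \ref{PINSM} are in hand: the argument is a routine substitution together with a rescaling of the Gaussian integral. The only structurally nontrivial input is the vanishing $B^N(x) = 0$, which reflects the orthogonality between the value and gradient of an eigenfunction at a point on a two-point homogeneous space---a manifestation of the $SO(m+1)$-symmetry. Without this vanishing one would need to track the Schur complement correction, but the symmetry forces it to disappear and leaves a completely explicit constant-coefficient Gaussian integral.
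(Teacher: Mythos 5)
Your proposal is correct and follows essentially the same route as the paper: substitute the covariance data from Proposition \ref{PINSM} into the Kac--Rice density \eqref{EVOL1} of Proposition \ref{BSZ}, use the vanishing of the cross--term $B^N$ to collapse the Schur complement, and rescale the Gaussian integral. In fact you spell out the computation more fully than the paper does (the paper's proof simply records the substitutions for $\sqrt{d_N^{-1}\Pi_N(x,x)}$ and $\Lambda^N(x)$ and stops); your explicit remark that $B^N=0$ trivializes $\Lambda^N = C^N - B^{N*}(A^N)^{-1}B^N$ is the right observation, and your normalization comment about the residual $(m\,\mathrm{Vol}(S^m))^{-1/2}$ factor being absorbed into the constant is a fair way to reconcile your arithmetic with the slightly loosely stated $C_m$ in the statement.
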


\begin{proof}

By Propositiosn \ref{BSZ} and \ref{PINSM},  we have
\begin{equation} K_1^N(x) = \frac{\sqrt{Vol(S^m)}}{\pi^{m}} \int_{\R^m} || \Lambda^N (x)^{1/2} \xi||
\exp\left( - {\langle \xi,\xi\rangle}\right)d\xi,
\end{equation}
where
$$\Lambda^N(x) = \frac{1}{d_N} \left(\frac{1}{m
Vol(S^{m})} \lambda_N^2 d_N g_x \right). $$

\end{proof}

\subsection{Random Riemannian waves: proof of Theorem \ref{ZMEASURE}}

We now generalize the result to any compact $C^{\infty}$
Riemannian manifold $(M, g)$ which is either aperiodic or Zoll. As
in the case of $S^m$, the key issue is the asymptotic behavior of
derivatives of the spectral projections
\begin{equation}\label{RESPECPROJ}  \Pi_{I_N}(x, y) = \sum_{j:
\lambda_j \in I_N} \phi_{\lambda_j}(x) \phi_{\lambda_j}(y).
\end{equation}

\begin{prop}\label{PING}   Assume $(M, g)$ is either aperiodic and $I_N = [N, N + 1]$ or Zoll and
 $I_N$ is a cluster decomposition. Let $\Pi_{I_N}: L^2(M) \to
\hcal_N$ be the orthogonal projection. Then:

\begin{itemize}

\item (A) $\Pi_{I_N}(x,x) = \frac{1}{Vol(M, g))} d_N (1 + o(1))$;

\item (B) $d_x \Pi_{I_N}(x, y)|_{x = y} = d_y \Pi_N(x, y)|_{x = y}
= o(N^{m })$;

\item (C) $d_x \otimes d_y \Pi_{I_N}(x, y)|_{x = y} =
\frac{1}{Vol(M, g))} \lambda_N^2  d_N g_x (1 + o(1)).$

\end{itemize}

In the aperiodic case,
\begin{enumerate}

\item $\Pi_{[0, \lambda]} (x,x) = C_m \lambda^m + o(\lambda^{m -
1}); $

\item $d_x \otimes d_y \Pi_{[0, \lambda]}(x, y) |_{x = y} = C_m
\lambda^{m +2} g_x + o(\lambda^{m + 1 }). $

\end{enumerate}
In the Zoll case, one adds the complete asymptotic expansions for
$\Pi_{I_N}$ over the $N$ clusters to obtain expansions for
$\Pi_N$.

\end{prop}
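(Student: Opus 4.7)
The plan is to reduce all three statements to the pointwise local Weyl law for the cutoff spectral projection $\Pi_{[0,\lambda]}$ and its first and second derivatives on the diagonal, and then pass to the cluster $\Pi_{I_N}$ by subtracting $\Pi_{[0,N+1]} - \Pi_{[0,N]}$. The starting point is H\"ormander's Fourier integral parametrix, which represents
\[
\Pi_{[0,\lambda]}(x,y) \;=\; (2\pi)^{-m}\int_{|\xi|_{g_x}\le \lambda} e^{i\langle \xi,\,\exp_x^{-1}(y)\rangle}\, a(x,y,\xi)\, d\xi \;+\; R_\lambda(x,y),
\]
with $a$ a classical symbol of order zero, $a(x,x,\xi) \equiv 1$, and $R_\lambda$ of uniformly controlled size. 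Differentiating in $x$ or $y$ before setting $x=y$ brings down factors of $\xi_k$ or $\xi_i\xi_j$ against $a$ and its derivatives; direct calculation on the ball yields the three leading behaviors: the integral of $1$ gives $c_m \lambda^m$ with $c_m = \omega_m/(2\pi)^m$, the integral of $\xi_k$ vanishes by odd symmetry, and the integral of $\xi_i\xi_j$ gives $\tfrac{c_m}{m+2}\lambda^{m+2} g_x^{ij}$.

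Next I would improve the H\"ormander remainder from $O(\lambda^{m-1})$ to $o(\lambda^{m-1})$, which is where the dynamical hypothesis enters. In the aperiodic case this is the Duistermaat--Guillemin theorem applied to the pointwise Poisson--wave trace $\sum_j e^{it\lambda_j}|\phi_j(x)|^2$: aperiodicity ensures that the only singularity of this distribution at the base point $x$ is at $t=0$, and the Fourier Tauberian theorem upgrades the Weyl remainder accordingly. The same upgrade applies to $\partial_x^\alpha\partial_y^\beta \Pi_{[0,\lambda]}(x,y)|_{x=y}$, since this is the pointwise trace of $A_{\alpha\beta}\circ \Pi_{[0,\lambda]}$ for a pseudodifferential operator $A_{\alpha\beta}$ of order $|\alpha|+|\beta|$, and the Safarov pointwise Weyl formula for $\psi$DO's inherits the same $o$ remainder. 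This directly yields the aperiodic cutoff statements (1) and (2).

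To obtain (A), (B), (C) for the cluster projection, I would write $\Pi_{I_N} = \Pi_{[0,N+1]} - \Pi_{[0,N]}$ and apply the mean value theorem to the leading terms, dropping each power of $N$ by one. Ivrii's two-term Weyl law gives $d_N = c_m\, m\, Vol(M,g)\, N^{m-1} + o(N^{m-1})$, and comparing with $\Pi_{I_N}(x,x) = c_m m N^{m-1} + o(N^{m-1})$ produces (A). The same subtraction yields the $N^{m+1}$ leading asymptotic for $d_x\otimes d_y\Pi_{I_N}|_{x=y}$, which matches $\lambda_N^2 d_N/(m\,Vol(M,g))\cdot g_x$ after inserting $\lambda_N \sim N$, giving (C). For (B), the vanishing by odd symmetry of the leading integral already forces $d_y\Pi_{[0,\lambda]}(x,y)|_{x=y} = O(\lambda^m)$ with no $\lambda^{m+1}$ term, so subtraction gives $O(N^{m-1}) = o(N^m)$, which is the weak bound actually required downstream in the Kac--Rice formulas. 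In the Zoll case the cluster $I_N$ sits within $O(N^{-1})$ of $\tfrac{2\pi}{T}(N+\tfrac{\beta}{4})$, and the Weinstein--Colin de Verdi\`ere clustering theorem provides a complete asymptotic expansion for $\Pi_{I_N}(x,x)$ and its diagonal derivatives from the $T$-periodic part of the wave trace; the leading term matches the aperiodic calculation at the level of principal symbols, and the cutoff version $\Pi_{[0,\lambda]}$ is recovered by summing the cluster expansions over $N$.

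The main technical obstacle I anticipate is the derivative analogue of Duistermaat--Guillemin, since the classical statement is phrased only for the counting function itself. Extending it to $\partial_x^\alpha\partial_y^\beta \Pi_{[0,\lambda]}(x,y)|_{x=y}$ requires tracking the Poisson relation applied to the distributional trace of $A_{\alpha\beta}\cos(t\sqrt{\Delta})|_{(x,x)}$ and verifying, via Safarov's Fourier Tauberian theorem, that aperiodicity of the loop set $\lcal_x \subset S^*_x M$ kills every non-zero singularity based at $x$. This is standard in spirit but requires the full microlocal machinery behind the parametrix; once it is in place, all of (A), (B), (C), (1), (2) follow by the bookkeeping above.
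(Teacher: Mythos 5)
The paper itself gives no proof of Proposition~\ref{PING}: it states it and refers to \cite{Z4} for the calculation.  Your proposal follows precisely the route taken in that reference and in the standard literature: represent $\Pi_{[0,\lambda]}$ by H\"ormander's Lagrangian parametrix, compute the leading diagonal asymptotics (and of $d_y$, $d_x\otimes d_y$ on the diagonal) from the integrals of $1$, $\xi_k$, $\xi_i\xi_j$ over the cosphere bundle, upgrade the remainder under the aperiodicity hypothesis via Duistermaat--Guillemin/Safarov, pass to the cluster projection by $\Pi_{I_N}=\Pi_{[0,N+1]}-\Pi_{[0,N]}$ together with Ivrii's two-term law for $d_N$, and treat the Zoll case by the Weinstein/Colin de Verdi\`ere cluster expansion.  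This is correct in structure and in detail, and your explicit constants (the $\tfrac{c_m}{m+2}$ in the Hessian term, and your rewriting of (C) with a factor $\tfrac{1}{m\,\mathrm{Vol}(M,g)}$) agree with the companion Proposition~\ref{PINSM} for $S^m$; this strongly suggests that the displayed form of (C) in Proposition~\ref{PING} has dropped a factor $1/m$, a typographical slip you have implicitly corrected.

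The one step you should tighten is (B).  As written, you pass from ``$d_y\Pi_{[0,\lambda]}(x,y)|_{x=y}=O(\lambda^m)$ (no $\lambda^{m+1}$ term)'' directly to ``subtraction gives $O(N^{m-1})$.''  Subtracting two quantities that are merely $O(\lambda^m)$ gives $O(N^m)$, not $O(N^{m-1})$.  What makes the subtraction drop a power is that the $\lambda^m$ coefficient is a fixed function of $x$ that cancels between the two cutoffs; i.e.\ one needs a genuine two-term asymptotic
\[
d_y\Pi_{[0,\lambda]}(x,y)\big|_{x=y}=\beta(x)\,\lambda^m+O(\lambda^{m-1}),
\]
with $\beta$ independent of the endpoint.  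This is available because the $\lambda^{m+1}$ term vanishes by parity, so the Fourier Tauberian argument applied one step further down (this is part of the derivative Weyl analysis you already invoke for (1) and (2)) yields the $\lambda^m$ coefficient together with an $O(\lambda^{m-1})$ remainder.  Once that is stated, $d_y\Pi_{I_N}|_{x=y}=\beta(x)\bigl[(N+1)^m-N^m\bigr]+O(N^{m-1})=O(N^{m-1})=o(N^m)$, which is (B).  With this small repair the argument is complete and matches the one referenced by the paper.
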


We then have:

\begin{prop} \label{REALDENSITYg} For the asymptotically fixed frequency
ensemble, and for any $C^{\infty}\;\; (M, g)$ which is either Zoll
or aperiodic (and with $I_N$ as in Proposition \ref{PING}) , we
have

\begin{equation}\begin{array}{lll}\label{EVOLa}
K_1^N(x)  & = & \frac{1}{\pi^{m} (\lambda_N)^{m/2}  } \int_{\R^m}
||\xi|| \exp\left( -\frac{1}{ \lambda_N}
{\langle\xi,\xi\rangle}\right) d\xi + o(1) \\& &  \\
& \sim &  C_m N ,  \end{array}
\end{equation} where $C_m = \frac{1}{\pi^{m}} \int_{\R^m} ||\xi||
\exp\left( - {\langle\xi,\xi\rangle}\right) d\xi.$ The same
formula holds for the cutoff ensemble.

\end{prop}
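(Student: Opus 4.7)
The plan is to apply the Kac-Rice density formula of Proposition \ref{BSZ} and substitute the microlocal spectral projection asymptotics of Proposition \ref{PING}, exactly as was done for $S^m$ in the proof of Proposition \ref{REALDENSITY}. First I would work in geodesic normal coordinates at $x$, so that $g_x$ is the Euclidean inner product. Proposition \ref{PING} then gives
\begin{align*}
A^N &= \Vol(M,g)^{-1}(1+o(1)), \\
B^N_q &= o(N^m/d_N) = o(N), \\
C^N_{qq'} &= \Vol(M,g)^{-1}\lambda_N^2\, g_{qq'}(1+o(1)),
\end{align*}
where in the aperiodic case $d_N\sim c_m N^{m-1}$ follows from Ivrii's Weyl law and in the Zoll case one uses the cluster decomposition.

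Next I would compute the Schur complement $\Lambda^N(x) = C^N - (B^N)^* (A^N)^{-1} B^N$ that appears in Proposition \ref{BSZ}. The rank-one correction $(B^N)^* (A^N)^{-1} B^N$ has size $o(N^2)$, strictly subleading to $C^N\sim \lambda_N^2/\Vol(M,g)\sim N^2$. Hence $\Lambda^N(x) = \lambda_N^2\Vol(M,g)^{-1} g_x(1+o(1))$, and in our coordinates $\|\Lambda^N(x)^{1/2}\xi\| = \lambda_N\Vol(M,g)^{-1/2}\|\xi\|(1+o(1))$. Substituting into the Kac-Rice identity, the prefactor $1/\sqrt{d_N^{-1}\Pi_{I_N}(x,x)}$ contributes $\sqrt{\Vol(M,g)}(1+o(1))$, which exactly cancels the $\Vol(M,g)^{-1/2}$ factor, leaving
$$K_1^N(x) = \frac{\lambda_N}{\pi^m}\int_{\R^m}\|\xi\|\, e^{-\langle\xi,\xi\rangle}\,d\xi\cdot(1+o(1)) = C_m\lambda_N(1+o(1))\sim C_m N,$$
since $\lambda_N\in I_N$ in both the aperiodic and Zoll regimes. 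For the cutoff ensemble, parts (1)--(2) of Proposition \ref{PING} give $\Pi_{[0,\lambda]}(x,x)\sim C_m\lambda^m$ and $d_x\otimes d_y\Pi_{[0,\lambda]}|_{x=y}\sim C_m\lambda^{m+2}g_x$, and the identical Schur-complement computation produces linear growth in $\lambda$.

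The main obstacle is not in the present proposition but in the spectral inputs it rests on: specifically, the off-diagonal bound $d_x\Pi_{I_N}(x,y)|_{x=y} = o(N^m)$ in Proposition \ref{PING} is the non-trivial ingredient, since it requires differentiating the two-term Weyl law in both variables with the correct remainder estimate. In the aperiodic case this follows from Ivrii's improvement of the Weyl law (differentiated local version), and in the Zoll case from the Duistermaat-Guillemin cluster expansion of the spectral projector. Once those inputs are granted, the proof of Proposition \ref{REALDENSITYg} is the algebraic Schur-complement manipulation and the one-line Gaussian integral above; the only additional care is to verify that the $o(1)$ errors in the matrix blocks survive the continuity of $\xi\mapsto\|\Lambda^N(x)^{1/2}\xi\|$ uniformly against the Gaussian weight, which is immediate by dominated convergence.
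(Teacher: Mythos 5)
Your proposal follows the paper's own proof essentially line for line: apply the Kac--Rice density formula of Proposition \ref{BSZ}, substitute the three matrix-element asymptotics from Proposition \ref{PING}, verify via the Schur complement that the $B^{N*}(A^N)^{-1}B^N$ correction is subleading to $C^N\sim \lambda_N^2 g_x/\Vol(M,g)$, and observe that the $\Vol(M,g)^{\pm 1/2}$ factors cancel to give $K_1^N\sim C_m N$, with the same adaptation to the cutoff ensemble via the second half of Proposition \ref{PING}. If anything you are slightly more explicit than the paper (which records the Schur-complement error as $o(N)$ where $o(N^2)$ would be more natural, and which compresses the normal-coordinate reduction into ``$(I+o(1))^{1/2}$''), but the method and the key cancellations are identical.
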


\begin{proof}

Both on a sphere $S^m$ or on a more general $(M, g)$ which is
either Zoll or aperiodic, we have by Propositions \ref{PINSM}
resp. \ref{PING} and the general formula for $\Delta^N$ in \S
\ref{DEN} that

\begin{eqnarray}
\; \Delta^N(z)&=& \frac{1}{Vol(M, g)}  \left(
\begin{array}{cc}
(1 + o(1))   & o(1)\\
o(1) &  N^2  \;  g_x(1 +   o(1))
\end{array}\right)\,,\nonumber \\
\end{eqnarray}
It follows that
\begin{equation}\label{fg6a}
\La^N=C^N-B^{N *}(A^N)^{-1}B^N =  \frac{1}{Vol(M, g)}  N^2 \; g_x
+ o(N). \end{equation}

Thus, we have
\begin{equation} \begin{array}{lll} K_1^N(x) & \sim  &
 \frac{\sqrt{Vol(M, g)}}{\pi^{m} } \int_{\R^m} || \Lambda^N (x)^{1/2} \xi||
\exp\left( - {\langle \xi,\xi\rangle}\right)d\xi \\ && \\
& = & \frac{N}{\pi^{m}} \int_{\R^m} || (I + o(1)) (x)^{1/2} \xi||
\exp\left( - {\langle \xi,\xi\rangle}\right)d\xi,
\end{array} \end{equation}
where $o(1)$ denotes a matrix whose norm is $o(1)$, as as $N \to
\infty$ we obtain the stated asymptotics.

\end{proof}

So far, we have only determined the expected values of the nodal
hypersurface measures. To complete the proof of Theorem
\ref{ZMEASURE}, we need to prove:

\begin{prop} \label{VAR} If $(M, g)$ is real analytic, then
 the variance of $\frac{1}{\lambda_N} X^N_{\psi}$ is bounded.
 \end{prop}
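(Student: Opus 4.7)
The plan is to reduce the variance estimate to a deterministic pointwise bound on $X_\psi^N$ by exploiting the scale invariance of nodal sets, and then to establish a Donnelly--Fefferman type bound that applies not just to single eigenfunctions but to arbitrary elements of the cluster space $\mathcal{H}_N$.

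First I would observe that $X_\psi^N(cf) = X_\psi^N(f)$ for any $c \neq 0$, since $Z_{cf} = Z_f$. Hence, $\gamma_N$-almost surely,
\begin{equation*}
X_\psi^N(f_N) \;=\; X_\psi^N\!\left(\tfrac{f_N}{\|f_N\|_{L^2}}\right),
\end{equation*}
and the right hand side depends only on the direction of $f_N$, which is distributed uniformly on the unit sphere of $\mathcal{H}_N$. Consequently, to bound $\mathrm{Var}(X_\psi^N/\lambda_N)$ it suffices to prove a deterministic estimate
\begin{equation*}
\sup_{f \in \mathcal{H}_N,\; \|f\|_{L^2} = 1} \mathcal{H}^{n-1}(Z_f) \;\leq\; C(M,g)\,\lambda_N,
\end{equation*}
for then $|X_\psi^N(f_N)| \leq \|\psi\|_{C^0} C \lambda_N$ almost surely, so that $\mathrm{Var}(X_\psi^N/\lambda_N) \leq \|\psi\|_{C^0}^2 C^2$, yielding the desired bound (using $\mathbb{E}[X_\psi^N] = O(\lambda_N)$ from Proposition \ref{REALDENSITYg} to absorb the mean-square term).

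Second, I would establish the cluster version of Theorem \ref{DF}. The single-eigenfunction proof of \cite{DF, Lin} has two ingredients: a doubling estimate $\sup_{B_{2r}}|\phi_\lambda|\leq e^{C\lambda}\sup_{B_r}|\phi_\lambda|$, and a growth bound $\sup_{M_\tau}|\phi_\lambda^{\mathbb{C}}| \leq C e^{\tau\lambda}\|\phi_\lambda\|_{L^2}$ on Grauert tubes which is used in Jensen-style zero counts on complex discs transverse to the real domain. Both inputs extend to any $f = \sum_{\lambda_j \in I_N} c_j \phi_{\lambda_j} \in \mathcal{H}_N$: by Proposition \ref{PW} applied termwise together with Cauchy--Schwarz,
\begin{equation*}
\sup_{M_\tau}|f^{\mathbb{C}}| \;\leq\; \Bigl(\textstyle\sum_j |c_j|^2\Bigr)^{1/2} \Bigl(\textstyle\sum_j \sup_{M_\tau}|\phi_{\lambda_j}^{\mathbb{C}}|^2 \Bigr)^{1/2} \;\leq\; C\,\lambda_N^K\, e^{\tau N}\,\|f\|_{L^2},
\end{equation*}
while a lower bound $\sup_M |f| \geq c_0 > 0$ (for $\|f\|_{L^2} = 1$) comes from $\int_M |f|^2\,dV_g = 1$ and $\mathrm{Vol}(M) < \infty$. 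Together these yield a uniform doubling inequality at rate $e^{CN}$ for cluster elements, and the integral geometric argument of Donnelly--Fefferman then produces $\mathcal{H}^{n-1}(Z_f) \leq C\lambda_N$.

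The main obstacle is carrying out step two: one must verify that the constants appearing in the frequency growth estimates are uniform over all coefficient vectors $(c_j)$ on the unit sphere of $\mathcal{H}_N$, and that the Jensen-type counting of complex zeros --- which is usually stated for a function satisfying an eigenvalue equation --- depends only on the order of growth in the Grauert tube, not on the wave equation itself. This is essentially a matter of tracking constants through the analytic continuation arguments of \S 9, and does not require any new mechanism beyond Proposition \ref{PW} and the plurisubharmonic compactness used in Theorem \ref{ZERO}. Once the uniform hypersurface bound is in hand, the variance estimate is an immediate consequence of the scale-invariance reduction.
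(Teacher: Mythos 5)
Your proposal is correct and follows the paper's route: reduce to a deterministic bound $\mathcal{H}^{n-1}(Z_f)\leq C\lambda_N$ uniformly over unit-norm cluster elements $f\in\mathcal{H}_{I_N}$, then conclude bounded variance from boundedness of the random variable. The paper's proof is a single sentence citing Theorem~\ref{DF}; what you add is a sketch of why that theorem, which as stated applies to individual eigenfunctions, extends to linear combinations of eigenfunctions with frequencies in a window of width one (or a Zoll cluster) --- namely, a uniform Grauert-tube growth estimate obtained termwise from Proposition~\ref{PW} plus Cauchy--Schwarz, together with the observation that the Donnelly--Fefferman integral-geometric/Jensen argument only needs the growth rate, not the eigenvalue equation. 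One small streamlining: once you have the deterministic bound on the unit sphere, the scale invariance $X_\psi^N(cf)=X_\psi^N(f)$ already gives $|X_\psi^N(f_N)/\lambda_N|\leq C\|\psi\|_{C^0}$ almost surely, and a bounded random variable automatically has bounded variance, so the appeal to Proposition~\ref{REALDENSITYg} to ``absorb the mean-square term'' is unnecessary.
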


 \begin{proof} By Theorem \ref{DF},
 for  $f_N \in \hcal_{I_N}$,  $\frac{1}{\lambda_N}
Z_{f_N}$ has bounded mass. Hence, the random variable
$\frac{1}{\lambda_N} X^N_{\psi}$ is bounded, and therefore so is
its variance.

\end{proof}

\begin{rem}

The variance of $\frac{1}{\lambda_N} X^N_{\psi}$ is given by
\begin{equation} Var (\frac{1}{\lambda_N} X^N_{\psi}) =
\frac{1}{\lambda_N^2} \int_M \int_M \left(K^N_2(x, y) - K^N_1(x)
K^N_1(y) \right) \psi(x) \psi(y) dV_g(x) dV_g(y), \end{equation}
where $K_2^N(x, y) = \E_{\gamma_N} (Z_{f_N}(x) \otimes
Z_{f_N}(y))$ is the pair correlation function for zeros.  Hence,
boundedness would follow from
\begin{equation}
\frac{1}{\lambda_N^2} \int_M \int_M  K^N_2(x, y) \; dV_g(x)
dV_g(y) \leq C.
\end{equation}
There is a formula similar to that for the density in Proposition
\ref{BSZ} for $K^N_2(x,y)$ and it is likely that  it could be used
to prove boundedness of the variance for any $C^{\infty}$
Riemannian manifold.
\end{rem}

\subsection{Random sequences and  proof of Theorem  \ref{ZDSM}}

We recall that the set of   random sequences  of Riemannian waves
of increasing frequency is  the probability space $\hcal_{\infty}
= \prod_{N=1}^{\infty} \hcal_{I_N}$ with the measure
$\gamma_{\infty}  = \prod_{N=1}^{\infty} \gamma_N$. An element in
$\hcal_{\infty}$ will be denoted ${\bf f} = \{f_N\}$. We have,
$$|(\frac{1}{\lambda_N} Z_{f_N}, \psi)|\leq \frac{1}{\lambda_N}
\hcal^{n-1}(Z_{f_N}) \; \|\psi\|_{{ C}^0}. $$ By a density
argument it suffices to prove that the linear statistics
$\frac{1}{\lambda_N} (Z_{f_N}, \psi) - \frac{1}{Vol(M, g)} \int_M
\psi dV_g \to 0$ almost surely in $\hcal_{\infty}$ We know that

\noindent (i) $\lim_{N\rightarrow \infty} \frac{1}{N} \sum_{k\leq
N} \E(\frac{1}{\lambda_k} X^k_{\psi} )= \frac{1}{Vol(M, g)} \int_M
\psi dV_g;$

\noindent(ii)  $Var(\frac{1}{\lambda_N} X^N_{\psi})$
 is bounded on $\hcal_{\infty}$.

  Since $\frac{1}{\lambda_N} X^N_{\psi}$ for
$\{,N=1,2,\ldots\}$ is a sequence of independent random variables
in $\hcal_{\infty}$ with bounded variances, the Kolmogorov strong
law of large numbers  implies that
$$ \lim_{N\rightarrow \infty} \frac{1}{N} \sum_{k\leq
N} (\frac{1}{\lambda_k} X^k_{\psi} )= \frac{1}{Vol(M, g)} \int_M
\psi dV_g$$ almost surely.

\subsection{Complex zeros of random waves}

We now state a complex analogue of the equidistribution of real nodal sets and show that it agrees with the
the limit formula of  Theorem \ref{ZERO}.

We  complexify Riemannian  random waves as
$$f_N^{\C} = \sum_{j = 1}^{d_N} c_{N j} \phi_{N j}^{\C}. $$
We note that the coefficients $c_{N j}$ are real and that the
Gaussian measure on the coefficients remains the real Gaussian
measure $\gamma_N$.
The two point function is the analytic extensions to the totally real anti-diagonal in
$M_{\C} \times M_{\C}$ is therefore 
\begin{equation}\label{CXSPECPROJ}  \E (|f_N(\zeta)|^2) =  \Pi_{I_N}(\zeta, \bar{\zeta}) = \sum_{j:
\lambda_j \in I_k} |\phi_j^{\C}(\zeta)|^2.  \end{equation}

As in the proof of Theorem \ref{ZERO}, the  current of integration over the complex
zero set
$$Z_{f_N^{\C}} = \{\zeta \in M_{\C}: f_N^{\C}  = 0\}
$$ is the $(1,1)$ current defined by
$$\langle [Z_{f_N^{\C}}], \psi \rangle = \int_{Z_{f_N^{\C}}} \psi, \;\; \psi \in \dcal^{m-1, m-1}(M_{\C}), $$
for smooth test forms of bi-degree $(m-1, m-1)$. In terms of
scalar functions $\psi$ we may define $Z_{f_N^{\C}}$ as the
measure,
$$\langle [Z_{f_N^{\C}}], \psi \rangle = \int_{Z_{f_N^{\C}}}  \psi
\omega_g^{m-1}/(m-1)!, $$ where $\omega_g = i \ddbar \rho$ is the
\kahler metric adapted to $g.$

The proof of the next result is close to the proof of Theorem \ref{ZERO} and we therefore
refer to \cite{Z4} for the details:

\begin{theo}\label{ZERORAN} \cite{Z4}  Let $(M, g)$ be  a  real
analytic compact Riemannian manifold. Then for either of the Riemannian random wave 
ensembles 
$$\E_{\gamma_N} \left(\frac{1}{N} [Z_{f_N^{\C}}] \right) \to  \frac{i}{ \pi} \ddbar |\xi|_g,\;\;
 \mbox{weakly in}\;\; \dcal^{' (1,1)} (B^*_{\epsilon} M).  $$
\end{theo}
 As mentioned above, this  result shows that the  complex zeros of the random
waves have the same expected limit distribution found in \cite{Z3}
for real analytic compact Riemannian manifolds with ergodic
geodesic flow.

\section{\label{PERC} Percolation heuristics}

In this final section, we review some of the more speculative conjectures relating nodal sets of both
eigenfunctions and random waves  to  percolation theory.  The conjectures are often quoted and
it therefore seems worthwhile to try to state them precisely. 
The only rigorous result to date regarding
eigenfunctions  is the theorem 
of Nazarov-Sodin on the expected number of nodal domains for random spherical harmonics \cite{NS}  (see  \cite{Z5}  
for a brief over-view).

The percolation conjectures concern the statistics of sizes of nodal domains or nodal components. They are based on the idea that the nodal domains resemble percolation clusters. 
One might measure the `size' of a nodal component $A_{\lambda_j}$
by its hypersurface area $\hcal^{n-1}(A_{\lambda, j})$, and a
nodal domain $D_{\lambda, j}$  by its volume $\hcal^n(D_{\lambda,
j})$ .  Let us restrict to the case of surfaces. For the purposes of this article, we introduce the term
{\it length spectrum}  of the nodal set as the set
\begin{equation} \label{LSP} Lsp(\phi_{\lambda}) =
\{(\hcal^1(C_{\lambda, j}): Z_{\phi_{\lambda}} = \bigcup
C_{\lambda; j}\} \end{equation} of lengths of its components,
counted with multiplicity. It is encoded by the empirical measure
of surface areas\begin{equation} \label{LEMP} d\mu_L =
\frac{1}{\hcal^{1}(Z_{\phi_{\lambda}})} \sum_{C_{\lambda, j}}
\delta_{\hcal^{1}(C_{\lambda, j})} \in \pcal_1(\R),
\end{equation}
(where $\pcal(\Omega)$ is the set of probability measures on
$\Omega$), or equivalently by the length distribution function,
\begin{equation}\label{LDISTFUN}  \lcal_{\lambda}(t) = \sum_{j: \hcal^1(C_{\lambda, j})
 \leq t} \hcal^1(C_{\lambda, j}). \end{equation}  We also consider the {\it area spectrum},
\begin{equation} \label{ASP} Asp(\phi_{\lambda}) =
\{(\hcal^2(A_{\lambda, j}): M \backslash Z_{\phi_{\lambda}} = \bigcup
A_{\lambda; j}\},  \end{equation} encoded by its empirical measure
 It is encoded by the empirical measure
of surface areas\begin{equation} \label{AEMP} d\mu_A =
\frac{1}{\mbox{Area}(M)} \sum_{A_{\lambda, j}}
\delta_{\hcal^{2}(A_{\lambda, j})} \in \pcal_1(\R),
\end{equation} or by
the area distribution function,
\begin{equation}\label{ADISTFUN}  \acal_{\lambda}(t) = \sum_{j: \hcal^2(A_{\lambda, j})
 \leq t} \hcal^2(A_{\lambda, j}). \end{equation}
 Of course, there are some obvious constraints on such spectra;
 e.g. in the analytic case, there  could only exist $O(\lambda)$ components with
$\hcal^{1}$-length of order $1$, and only a bounded number of
order $\lambda$.

In computer graphics of eigenfunctions on plane domains or
surfaces,  one sees many `small' components $C_{\lambda, j}$ of
the nodal set  whose length appears to be   of  order
$\frac{1}{\lambda}$. But one also sees long snaky nodal lines. How
long are they? Do they persist as $\lambda \to \infty$? Roughly
speaking, one may ask what proportion of the components come in
sizes with different orders of magnitude. Of course, this depends
on how many components there are, so it could be simpler to work
with $\lcal(\phi_{\lambda}), \acal(\phi_{\lambda})$.
\begin{itemize}

\item How many components have $\hcal^{n-1}$-surface measure which
is $\geq C \lambda^{\gamma}$ for some given  $0 < \gamma \leq 1$.
It is possible that some individual nodal component has
$\hcal^{n-1}$-surface area commensurate with that of the entire
nodal set, as in the Lewy spherical harmonics with just two or
three nodal components \cite{Lew}.

\item How many components have $\hcal^{n-1}$-surface measure (i.e.
length in dimension two) which is bounded below by a constant $C >
0$ independent of $\lambda$? Such components are sometimes termed
``percolating nodal lines" since their hypersurface volume  is
commensurate with the size of the macroscopic object (i.e. $M$).

\item How many components have $\hcal^{n-1}$-surface measure of
the minimal  order $\frac{1}{\lambda}$?

\end{itemize}

The percolation conjectures relate the asymptotic
distribution of lengths of nodal components and areas of nodal
domains of eigenfunctions  as defined in \eqref{LEMP}-\eqref{AEMP}
to lengths of boundaries and areas of percolation clusters at
criticality. There are different types of conjectures for the {\it
fixed frequency } ensemble and the {\it high frequency cutoff}
ensemble (see \S \ref{RWONB} for the definitions). According to
the random wave hypothesis, the conjectures concerning the fixed
frequency ensemble  (e.g. random spherical harmonics of fixed
degree) should also  apply to nodal sets of eigenfunctions of
quantum chaotic systems.

Percolation theory is concerned with connectivity and transport in
a complex system. In particular, it studies connected clusters of
objects in a random graph. In bond percolation the edges of the
graph are independently open or closed with some probability $p$.
The open edges form a subgraph whose connected components form the
clusters. In site percolation the vertices are open or closed and
an open path is a path through open vertices. The open cluster
$C(v)$ of a vertex is the set of all open vertices which are
connected to $v$ by an open path.

There also exists an analogous continuum percolation theory for
level sets of random functions. We will assume the random
functions are Gaussian Riemannian random waves on a surface. The
main problem is to study the connectivity properties of  level
sets $\{f = t\}$. One imagines a random landscape of lakes and
islands depending on the variable height $t$ of the water,  the
islands being the super-level sets $\{f
> t\}$ of the random functions. For high water levels, the islands
are disconnected, but as the water level is lowered the islands
become more connected. At a critical level $t_c$ they `percolate',
i.e. it is possible to traverse the landscape while remaining on
the land. A review with many illustrations is given by Isichenko
\cite{Isi} (see Section E (c), pages 980-984).  As explained in
\cite{Isi} page 984, the contour lines of a random potential are
associated to hulls of percolation clusters.  Hence the area
spectrum \eqref{ASP} is similar to the set of sizes of connected
clusters in a percolation model.

In the physics literature, the random functions are usually
functions on $\R^2$ (or possibly higher dimensional $\R^n$) and
the Gaussian measure on the space of functions corresponds to a
Hilbert space inner product. The Hilbert space is usually taken to
be a Sobolev space, so that the inner product has the form $\int
w(\xi) |\hat{f}(\xi)|^2 d \xi$ (where $\hat{f}$ is the Fourier
transform of $f$) and  $w(\xi) = |\xi|^{2(1 + \zeta)}$. The case
$\zeta = 0$ is known as the Gaussian free field (or massless
scalar field) and is quite special in two dimensions since then
the inner product $\int_{\R^2} |\nabla f|^2 dx$ is conformally
invariant. There are rigorous results on level sets of
discretizations of the Gaussian free field and their continuum
limits in \cite{SS,Mi}, with authoritative comments on the
physics literature.

For purposes of this exposition, we assume the Riemannian random
waves fall are of the types discussed in \S \ref{RWONB}. In all
cases, we truncate the frequency above a spectral parameter
$\lambda$ and consider asymptotics as $\lambda \to \infty$. In
this high frequency limit, the random waves oscillate more rapidly
on the length scale $\frac{1}{\lambda}$. Since the conjectures and
results depend strongly on the chosen weight $w$, we break up the
discussion into two cases as in \S \ref{RWONB}: the high frequency
cutoff ensemble and the fixed frequency ensemble.
For each ensemble we let  $\E_{\lambda}$ denote the expectation
with respect to the Gaussian measure on the relevant space of
linear combinations. Then we may ask for the asymptotic behavior
of the expected distribution of lengths of nodal lines, resp. area
of nodal domains
\begin{equation}\label{EXPEMP}  \E_{\lambda} d\mu_{L}, \;\;\;\;\;\; \E_{\lambda}
d\mu_A, \end{equation} where $d\mu_L,$ resp. $d\mu_A$ are the
empirical measures of lengths \eqref{LEMP} of nodal lines, resp.
areas \eqref{AEMP} of nodal domains.

\subsection{High frequency cutoff ensembles}

The distribution of contour lengths of certain  Gaussian random
surfaces over $\R^2$ was studied at the physics level of rigor in
\cite{KH}. They define the Gaussian measure as $e^{- f_{\zeta}(h)}
dh$ where the `free energy' is defined by
$$f_{\zeta}(h) = \frac{K}{2} \int_{\R^2} \chi(\frac{|\xi|}{\lambda}) \; |\hat{h}(\xi)|^2
|\xi|^{2 (1 + \zeta)} d \xi,$$ where $\chi$ is a cutoff function
to $[0, 1]$ (they use the notation  $a$ for $ \frac{1}{\lambda}$
in our notation). When $\zeta = 0$, this is a truncated Gaussian
free field (truncated at frequencies $\leq \lambda$) and its
analogue on a surface $(M, g)$ is the Riemannian random wave model
with spectral interval $[0, \lambda]$ and weight $w(\lambda) =
\frac{1}{\lambda}$. The parameter $\zeta$ is referred to as the
'roughness exponent' in the physics literature. In the case of the
Gaussian free field $\zeta = 0$ the inner product is the Dirichlet
inner product $\int_{\R^2} |\nabla f|^2 dx$.

An important feature of the ensembles is scale-invariance. In the
special case $\zeta = 0$ (and dimension two), the Dirichlet inner
product $\int_M |\nabla f|^2_g dA_g$ is conformally invariant,
i.e. invariant under conformal changes $g \to e^u g$ of the
Riemannian metric. When $\zeta \not= 0$ this is not the case, but
it is  assumed in  \cite{KH}   that the fluctuations of the random
Gaussian surface with height function $h$  are invariant under the
rescaling $h(r) \to c^{- \zeta} h(c r)$ for any $c > 1$. The
authors of \cite{KH} then  make a number of conjectures concerning
the distribution of contour lengths, which we interpret as
conjectures concerning $\E d\mu_L$. First, they consider contours
(i.e. level sets) through a fixed point $x_0$ and measure its
length with the re-scaled arc-length measure $\lambda ds$, i.e.
with arclength $s$ in units of $\frac{1}{\lambda}$.  They define
the fractal dimension of a nodal line component as the dimension
$D$ so that $s \sim R^{D}$ where $R$ is the radius of the nodal
component (i.e. half the diameter). They define $P(s)$ as the
probability density that the contour through $x_0$ has length $s$.
The  principal claim is  that $P(s) \sim s^{- \tau - 1}$ satisfies
a power law for some exponent $\tau$ (\cite{KH} (4)). They also
defines the distribution of loop lengths $\tilde{P(s)} \sim
P(s)/s$ as the probability density that a random component has
length $s$. We interpret their  $\tilde{P(s)}$ as the  density of
$\lim_{\lambda \to \infty} \E d\mu_L$ with respect to $ds$ on
$\R$.  We thus interpret their conjecture as saying that a unique
weak* limit of this family of measures exists and has a density
relative to $ds$ with a power law decay as above.

The claims are based in part on scaling properties of the contour
ensemble. They also are based in part on
 the expectation that,  at `criticality', the key percolation `exponents'
 of power laws  are universal  and therefore should be the same for the
discrete and continuum percolation theories (see e.g.
\cite{IsiK}). In \cite{KH}, the authors suggest that when a
certain roughness exponent $\zeta $ vanishes (the critical
models), the continuum problem is related to the four-state Potts
model. The $q$-state Potts model is an Ising type spin  model on a
lattice where  each spin can take one of $q$ values. It is known
to be related to connectivity and percolation problems on a graph
 \cite{Bax,Wu}.

They compute $D, \tau$ by relating both to another exponent $x_1$
defined by a ``contour correlation function" $\gcal_1(r)$, which
measures the probability that points at $x,x + r$ lie on the same
contour loop. They claim that $\gcal_1(r) \sim |r|^{- 2 x_1}$.
They claim that $D (3 - \tau) = 2 - 2 x_1$ and $D (\tau - 1) = 2 -
\zeta$. As a result, $D = 2 - x_1 - \zeta/2, \tau - 1 = \frac{2 -
\zeta}{2 - x_1 - \zeta/2}$. From the mapping to the four-state
Potts model, they conclude that $x_1 = \half$.

There  exist rigorous results in \cite{SS,Mi}  relating
discretizations of the Gaussian free field (rather than high
frequency truncations) to the percolation models. They prove that
in various senses, the zero set of the discrete Gaussian free
field tends to an $SLE_4$ curve. It does not seem to be known at
present if zero sets of the high frequency truncation of the
Gaussian free field  also tends in the same sense to an $SLE_4$
curve. Note that the SLE curves are interfaces and that one
must select one component of the zero set that should tend
to an SLE curve.  There might exist modified conjectures
regarding CLE curves.


To determine the `critical exponents' in continuum percolation, it
is tempting to find a way to `map' the continuum problem to a
discrete percolation model. A geometric `map' from a random wave
to a graph  is to associate to the random function its Morse-Smale decomposition,  known in the physics literature as the  ``Morse
skeleton" (see \S \ref{DECOMP} or  \cite{Web} for an extensive exposition).  As
discussed in \cite{Wei}, and as illustrated in Figure 10 of
\cite{Isi},
 the Morse complex of the random function plays the
role of the lattice in lattice percolation theory. 

\subsection{Fixed frequency ensembles}

We now consider Riemannian random waves of asymtotically fixed
frequency $\lambda$, such as random spherical harmonics of fixed
degree or Euclidean random plane waves of fixed eigenvalue. In
this case the weight is a delta function at the frequency. One
would expect different behavior in the level sets since only one
frequency is involved rather than the superposition of waves of
all frequencies $\leq \lambda$.

A  recent exposition in the specific setting of random Euclidean
eigenfunctions of fixed frequency is given by \cite{EGJS}.
 The level sets play the role of open paths.
Super-level sets are compared to clusters of sites in a critical
2D percolation model, such as bond percolation on a lattice. Each
site may of the percolation model may be visualized as a disc of
area $\frac{2 \pi^2}{\lambda^2}$, i.e. as a small component.  The
nodal domains may be thought of as connected clusters of a number
$n$ such discs.  Since nodal domains are connected components in
which the eigenfunction is either positive $+$ or negative $-$,
they are analogous to clusters of `open' or `closed' vertices.

The main conjectures in this fixed frequency ensemble are due to
E. Bogolmony and C. Schmidt \cite{BS}. They conjecture that the
continuum percolation problem should belong to the same
universality class as the Potts model at a certain critical point
(where $q$ is related to a certain temperature) for a large
rectangular lattice and that the nodal lines in the $\lambda \to
\infty$ limit tend to $SLE_6$ curves. This is similar to the
predictions of \cite{KH} but for a very different ensemble where
there is little apriori reason to expect conformal invariance in
the limit. There are parallel conjectures in \cite{BBCF} for
zero-vorticity isolines in 2D turbulence, which are also
conjectured to tend to $SLE_6$ curves.  They remark (page 127)
that this limit is surprising since continuous percolation models
assume short-correlations in the height functions whereas the
vorticity field correlations decay only like $r^{-4/3}$. They
write, ``When the pair correlation function falls off slower than
$r^{-3/2}$, the system is not expected generally to belong to the
universality class of uncorrelated percolation and to be
conformally invariant". The same remarks apply to the fixed
frequency ensemble, where the correlation function is the spectral
projection $\Pi_{[\lambda, \lambda + 1]}(x, y)$ for a fixed
frequency. In this case, the correlations decay quite slowly as
$r^{-\half}$; we refer to  \cite{BS2} for this background and also
for an argument why the nodal sets should nevertheless resemble
conformally invariant $SLE_g$ curves.

 If the nodal lines in the fixed frequency model are equivalent to
 the critical percolation model, then the
`probability' of finding a nodal domain of area $s$ should decay
like $s^{- \tau}$ where $\tau = \frac{187}{91}
> 2$ (see \cite{SA}, p. 52 for the percolation
theory result).  Under some shape assumptions adopted in
\cite{EGJS}, it is equivalent that  the probability of finding
clusters consisting of $n$ discs is of order $n^{- \tau}$. For
random spherical harmonics, one may  ask for the probability that
a spherical harmonic of degree $N$ has size $n$. For a fixed $(M,
g)$ with simple eigenvalues, this notion of probability from
percolation theory  does not make sense, but we might assume that
the number of of nodal components is of order $\lambda^2$ and ask
what proportion of the nodal components has size $1$.  To obtain a
percolating nodal line, one would need a cluster with $n =
\lambda$ sites, and thus the proportion of such nodal components
to the total number would be of order $\lambda^{- \tau}$. Thus, if
there are $C \lambda^2$ total components, the number of such
components would be around $\lambda^{2 - \tau} =
\lambda^{-\frac{5}{91}} < 1$, so the model seems to predict that
such macroscopic  nodal lines are quite rare. It also predicts
that the `vast majority' of nodal components are close to the
minimal size, which does not seem so evident from the computer
graphics.

\end{document}